\newtheorem{thm}{Theorem}[section]
\newtheorem{lem}[thm]{Lemma}
\newtheorem{cor}[thm]{Corollary}
\newtheorem{prop}[thm]{Proposition}
\theoremstyle{definition}
\newtheorem{defn}[thm]{Definition}
\newtheorem{defns}[thm]{Definitions}
\newtheorem{ex}[thm]{Example}
\newtheorem{exs}[thm]{Examples}
\newtheorem{problem}[thm]{Problem}
\theoremstyle{remark}
\newtheorem{rem}[thm]{Remark}
\newtheorem{rems}[thm]{Remarks}
\numberwithin{equation}{section}
\newcommand{\thmref}[1]{Theorem~\ref{#1}}
\newcommand{\corref}[1]{Corollary~\ref{#1}}
\newcommand{\secref}[1]{\S\ref{#1}}
\newcommand{\propref}[1]{Proposition~\ref{#1}}
\newcommand{\lemref}[1]{Lemma~\ref{#1}}
\newcommand{\exref}[1]{Example~\ref{#1}}
\newcommand{\remref}[1]{Remark~\ref{#1}}
\newcommand{\Hom}{\operatorname{Hom}}
\newcommand{\Res}{\operatorname{Res}}
\newcommand{\im}{\operatorname{im}}
\newcommand{\Sp}{{\mathcal  S}}
\newcommand{\Pp}{{\mathcal  P}}
\newcommand{\Z}{{\mathbb  Z}}
\newcommand{\F}{{\mathbb  F}}
\newcommand{\Q}{{\mathbb  Q}}
\newcommand{\N}{{\mathbb  N}}
\newcommand{\R}{{\mathbb  R}}
\newcommand{\C}{{\mathbb  C}}
\newcommand{\RP}{{\mathbb  R \mathbb P}}
\newcommand{\Sinfty}{\Sigma^{\infty}}
\newcommand{\sm}{\wedge}
\newcommand{\ra}{\rightarrow}
\newcommand{\xra}{\xrightarrow}
\newcommand{\hra}{\hookrightarrow}
\begin{document}

\title[Chromatic Fixed Point Theory]{Chromatic Fixed Point Theory and the Balmer spectrum for extraspecial 2-groups.}

\author[Kuhn]{Nicholas J.~Kuhn}
\email{njk4x@virginia.edu}

\address{Department of Mathematics \\ University of Virginia \\ Charlottesville, VA 22903}

\author[Lloyd]{Christopher J.~R.~Lloyd}
\email{cjl8zf@virginia.edu}



\date{July 31, 2020. Revised July 22, 2022.}

\subjclass[2010]{Primary 55M35; Secondary 55N20, 55P42, 55P91, 57S17.}

\begin{abstract}

In the early 1940's, P.A.Smith showed that if a finite $p$--group $G$ acts on a finite dimensional complex $X$ that is mod $p$ acyclic, then its space of fixed points, $X^G$, will also be mod $p$ acyclic.

In their recent study of the Balmer spectrum of equivariant stable homotopy theory,  Balmer and Sanders were led to study a question that can be shown to be equivalent to the following: if a $G$--space $X$ is a equivariant homotopy retract of the $p$-localization of a based finite $G$--C.W.~ complex, given $H<G$ and $n$, what is the smallest $r$ such that if $X^H$ is acyclic in the $(n+r)$th Morava $K$--theory, then $X^G$ must be acyclic in the $n$th Morava $K$--theory?  Barthel et.~ al.~ then answered this when $G$ is abelian, by finding general lower and upper bounds for these `blue shift' numbers which agree in the abelian case.

In our paper, we first prove that these potential chromatic versions of Smith's theorem are equivalent to chromatic versions of a 1952 theorem of E.E.Floyd, which replaces acyclicity by bounds on dimensions of mod $p$ homology, and thus applies to all finite dimensional $G$--spaces.  This unlocks new techniques and applications in chromatic fixed point theory.

Applied to the problem of understanding blue shift numbers, we are able to use classic constructions and representation theory to search for lower bounds.  We give a simple new proof of the known lower bound theorem, and then get the first results about nonabelian 2-groups that don't follow from previously known results.  In particular, we are able to determine all blue shift numbers for extraspecial 2-groups.

Applied in ways analogous to Smith's original applications, we prove new fixed point theorems for $K(n)_*$--homology disks and spheres.

Finally, our methods offer a new way of using equivariant results to show the collapsing of certain Atiyah-Hirzebruch spectral sequences in certain cases. Our criterion appears to apply to the calculation of all 2-primary Morava $K$--theories of all real Grassmanians.
\end{abstract}

\maketitle


\section{Introduction} \label{introduction}

If a finite group $G$ acts on a space $X$, we let $X^G$ denote its subspace of fixed points. We say that $X$ is based if it has a basepoint fixed by $G$.  We also recall that if $N < G$ is normal, then $X^N$ will be a $G/N$--space and $(X^N)^{G/N} = X^G$.

With a series of papers beginning with \cite{PA Smith 1938}, P.~A.~Smith introduced homological methods to study the structure of the fixed points of $G$--spaces, when $G$ is a $p$--group.  In particular, \cite[Theorem II]{PA Smith 1941} shows the following:

{\em Let $H$ be a subgroup of a finite $p$--group $G$, and let $X$ be a finite dimensional $G$--C.W.~ complex.
\begin{equation} \label{Smith thm} \text{If } H_*(X^H;\Z/p) \simeq \Z/p \text{ then } H_*(X^G;\Z/p) \simeq \Z/p.
\end{equation}}
Since there exists a normal series $H = K_0 \lhd \dots \lhd K_r = G$ with all subquotients cyclic of order $p$, an inductive argument shows that this theorem for all pairs $H < G$ is a consequence of the special case $\{e\} < C_p$.

Note that restricted to based $G$--spaces -- spaces for which a fixed point has already been assumed to exist, -- (\ref{Smith thm}) can be written as
{\em \begin{equation} \label{ based Smith thm} \text{If } \widetilde H_*(X^H;\Z/p) = 0 \text{ then } \widetilde H_*(X^G;\Z/p) =0.
\end{equation}}

A decade later, E.E.Floyd upgraded this result to one that gives information about all finite $G$--spaces, not just those that are $\Z/p$--acyclic.  \cite[Theorem 4.4]{floyd TAMS 52} shows the following: \\

{\em Let $H$ be a subgroup of a finite $p$--group $G$, and let $X$ be a finite dimensional $G$--C.W.~ complex such that $\dim_{\Z/p} H_*(X^H;\Z/p)$ is finite.
\begin{equation} \label{Floyd thm} \text{Then } \dim_{\Z/p} H_*(X^H;\Z/p) \geq \dim_{\Z/p} H_*(X^G;\Z/p).
\end{equation}}

As before, the theorem follows from the special case $\{e\} < C_p$. Note that this theorem restricted to based $G$--spaces {\em is} equivalent to the general case, as, when $X$ is unbased, the inequality for the based space $X_+$, the union of $X$ with a disjoint fixed basepoint, implies the inequality for $X$.

Also proved by Floyd was an Euler characteristic condition: with $G$, $H$, and $X$ as above,
{\em
\begin{equation} \label{Euler char thm} \chi(X^H) \equiv \chi(X^G) \mod p.
\end{equation}}
It is classical \cite[Theorem III.5.2]{bredon trans groups} that (\ref{Floyd thm}) and (\ref{Euler char thm}) imply the unbased form of Smith's theorem (\ref{Smith thm}). (See our proof of \thmref{unbased chom smith thm} for the same reasoning.)

Recently \cite{strickland,balmer sanders,6 author} the study of chromatic versions of Smith's theorem have arisen in work on the Balmer spectrum of the category of compact objects in the $G$--equivariant stable homotopy category, where $G$ is a finite group. In the key case when $G$ is a $p$--group, one is studying the tensor--triangulated ideals of compact objects in the $p$--local $G$--equivariant stable homotopy category.

To explain what these chromatic versions of Smith's theorem are, we need a definition.

\begin{defn} \label{p finite defn} Let $G$ be a finite $p$--group. A based $G$--space is a {\em $p$--local finite $G$-space} if it is a homotopy retract of the $p$--localization of a finite based $G$--C.W.~ complex.  (If $G$ is trivial, we will call such a space a {\em $p$--local finite space}.)
\end{defn}

We note that such spaces are $G$--spaces for which Floyd's theorem (\ref{Floyd thm}) applies: see \lemref{nice lemma}.

The connection with the work on the Balmer spectrum is via a lemma.

\begin{lem}\cite[Lemma 2.2]{barthel greenlees hausmann} \label{unstable to stable lemma} Let $G$ be a finite $p$--group. Compact objects in the $p$--local $G$--equivariant stable homotopy category are spectra $Y$ of the form $Y \simeq S^{-W}\Sinfty_GX$, where $W$ is a real representation of $G$, and $X$ is a $p$--local finite $G$--space as just defined.
\end{lem}

Now recall that, for each prime $p$, the Morava K-theories are a family of generalized homology theories equipped with products. $K(0)_*(X) = H_*(X;\Q)$ and, for $n \geq 1$, the theory $K(n)_*(X)$ has coefficient ring $K(n)_*$ equal to the graded field $\Z/p[v_n^{\pm 1}]$, with $|v_n| = 2p^n-2$, and satisfies a Kunneth theorem.  It is sometimes convenient to let $K(\infty)_*(X) = H_*(X;\Z/p)$.

We then have the following problem.
\begin{problem} \label{chro smith problems} Given a finite $p$--group $G$, and subgroup $H$, for what pairs $(n,m)$  is it true that for all $p$--local finite $G$--spaces $X$,
$$  \text{ if } \widetilde K(m)_*(X^H) = 0 \text{ then } \widetilde K(n)_*(X^G) = 0? $$
When this is true, we will say that {\em the $(G,H,n,m)$ Chromatic Smith Theorem} holds.
\end{problem}

Smith's theorem (\ref{ based Smith thm}) tells us that the $(G,H,\infty,\infty)$ Chromatic Smith Theorem holds for all $H \leq G$.  The existence of the Atiyah--Hirzebruch Spectral Sequence (AHSS) implies that if a space is acyclic in mod $p$ homology then it is $K(n)_*$--acyclic for all $n< \infty$, and thus the $(G,H,n,\infty)$ Chromatic Smith Theorem also holds for all finite $n$.

Nonequivariant work of Doug Ravenel and Steve Mitchell in the late 1970's and early 1980's gives us more information.

In \cite[Theorem 2.11]{ravenel conjs paper}, Ravenel proved the following:

{\em Let $X$ be a $p$--local finite space.
\begin{equation} \label{Ravenel thm}  \text{If } \widetilde K(m)_*(X) = 0 \text{ then } \widetilde K(n)_*(X) =0 \text{ for all } n<m.
\end{equation}}
We can conclude that the $(G,G,n,m)$ Chromatic Smith Theorem holds if $n\leq m$.

Mitchell \cite{mitchell} then showed that for each $n<\infty$, there exists a `type $n$' complex: a $p$--local finite space $X$ with $\widetilde K(n)_*(X) \neq 0$ but $\widetilde K(m)_*(X) = 0$ for all $m<n$.  Any type $n$ complex $X$ equipped with a trivial $G$--action yields an example for which $\widetilde K(n)_*(X^G) \neq 0$ but $\widetilde K(m)_*(X^H) = 0$ for all $m<n$ and $H \leq G$.
Thus the $(G,H,n,m)$ Chromatic Smith Theorem can possibly hold only if $n \leq m$.

The general problem can thus be regarded as the search for the correct common generalization of Ravenel's and Smith's theorems.  We note that, though normal series of the form $H = K_0 \lhd \dots \lhd K_r = G$ give some information from simpler cases, they don't necessarily give all.  Examples in this paper will make this very clear.

As we will review in \secref{balmer section}, given a finite $p$--group $G$, answering Problem \ref{chro smith problems} for all $(H,K,n,m)$ with $K < H < G $ is equivalent to the following problems in stable equivariant homotopy:
\begin{itemize}
\item  identifying the inclusions between prime ideals in the Balmer spectrum for the $p$--local $G$--equivariant stable homotopy category,
\item  characterizing which `chromatic type functions' can be realized by finite $G$--spaces or spectra.
\end{itemize}

The paper \cite{balmer sanders} made clear that these are deep and interesting problems to study, with solutions organized by the need to compute `blue shift' numbers, which they were able to compute when $G = C_p$.   Then in \cite{6 author}, these problems were solved  when $G$ is an abelian $p$--group, by finding general group theoretic upper and lower bounds for blue shift numbers which agree when $G$ is abelian.

In our paper, we first show that chromatic Smith theorems are surprisingly {\em equivalent} to chromatic versions of Floyd's theorem.  We look backwards to show this: our proof uses an idea of Jeff Smith critically used in a nonequivariant setting by Mike Hopkins and Smith \cite{hs}, in their mid 1980's proof of the Periodicity Theorem. Some special (modular) representations of the symmetric groups are used, as in \cite[Appendix C]{ravenel orange book} and \cite{jeff smith}.

The equivalence of the chromatic Smith and Floyd theorem problems then allows us to use classic constructions and (ordinary) representation theory to search for examples giving blue shift number lower bounds.  We give a simple new proof of the lower bound theorem of \cite{6 author}, and then get the first results about nonabelian groups that don't follow from previously known results.  In particular, we are able to answer Problem \ref{chro smith problems} for a family of 2-groups which include all extraspecial 2-groups.

One can also now apply chromatic Floyd theorems in situations where the corresponding chromatic Smith theorems have already been proved.  As a first application, we can deduce unbased versions of chromatic Smith theorems: these are results that cannot be deduced just using theorems in \cite{6 author}.  Similarly, we get results about group actions on $K(n)_*$--homology spheres, parallel to the classic case, illustrated with a rather down-to-earth result about involutions on the 5-dimensional Wu manifold.  Finally, we give a novel method using our Floyd theorems to show that, for some spaces $X$, the AHSS computing $K(n)_*(X)$ collapses after the first nonzero differential.  Again there are examples with familiar spaces:  in \cite{kuhn lloyd grassmanians}, summarized here in \exref{grassmanian calculation}, we show that our criteria apply to the calculation of $K(n)_*(Gr_d(\R^n))$ in many, and conjecturally all, cases.

In the next section, we will describe our main results in detail.

\subsection{Acknowledgements}

Our example resolving the open question about the Balmer spectrum when $G=D_8$ was first found in June, 2019, with a computer search by the second author using GAP. The second extraspecial 2--group example, with $G$ of order 32, was also found this way, which led to us discovering this infinite family of examples.  Tim Dokchister's lovely website GroupNames has been useful in thinking about these and other examples.

The $D_8$ example was presented by the first author in a talk in Oberwolfach in August, 2019 \cite{kuhn lloyd oberwolfach}.  At the same meeting, Markus Hausmann asked if a chromatic Floyd theorem might be true, and it eventually occurred to us that we had been working for awhile with an answer to this question, leading to our presentation here.

The first author is a PI of RTG NSF grant DMS-1839968, which partially supported the research of the second author.

\section{Main Results}

To describe our main results, it will be useful to introduce some notation: let $k_n(X) = \dim_{K(n)_*}\widetilde K(n)_*(X)$, when $X$ is a based $G$--space and this dimension is finite.  When $X$ is a $p$--local finite space, the numbers $k_0(X), k_1(X), k_2(X), \dots$ form a nondecreasing sequence: see \remref{floyd thm remarks}(b) below.  Furthermore, as the AHSS converging to $K(n)_*(X)$ clearly collapses at $E^2_{*,*} = H_*(X;K(n)_*)$ if the dimension of $X$ is less than $|v_n|+1 = 2p^n-1$, this sequence stabilizes at $k_{\infty}(X) = \dim_{\Z/p} H_*(X;\Z/p)$.

\subsection{Chromatic Floyd theorems}

The Floyd theorem analogue of our chromatic Smith theorem problem, Problem \ref{chro smith problems}, goes as follows.

\begin{problem} \label{chro Floyd problems} Given a finite $p$--group $G$, and subgroup $H$, for what pairs $(n,m)$  is it true that for all $p$--local finite $G$--spaces $X$,
$$  k_m(X^H)\geq k_n(X^G)? $$
When this is true, we will say that {\em the $(G,H,n,m)$ Chromatic Floyd Theorem} holds.
\end{problem}

The $(G,H,n,m)$ Chromatic Floyd Theorem clearly implies the $(G,H,n,m)$ Chromatic Smith Theorem.  Our perhaps surprising discovery is that they are, in fact, equivalent.

\begin{thm} \label{equivalence thm} If the $(G,H,n,m)$ Chromatic Smith Theorem is true then the $(G,H,n,m)$ Chromatic Floyd Theorem is true.
\end{thm}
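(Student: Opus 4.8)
The plan is to deduce the Chromatic Floyd Theorem from the Chromatic Smith Theorem by a ``thickening'' trick attributed to Jeff Smith, which upgrades a statement about acyclic spaces to one about dimension bounds. The idea is the following: given a finite $G$--space $X$ which need not be $K(m)_*$--acyclic on $H$--fixed points, one smashes $X$ with a carefully chosen finite $G$--space (or spectrum) $W$ so that $W \sm X$ \emph{becomes} $K(m)_*$--acyclic on $H$--fixed points, and then one extracts the desired inequality $k_m(X^H) \ge k_n(X^G)$ from the conclusion that $(W\sm X)^G$ is $K(n)_*$--acyclic, together with the Künneth theorem and good control over $k_*(W)$.

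The key steps, in order, would be: first, reduce to the level of genuine $G$--spectra, where smashing is well behaved and where ``$X^H$'' means geometric fixed points $\Phi^H X$; note that $\Phi^H(W\sm X) \simeq \Phi^H W \sm \Phi^H X$ and similarly for $G$, so the fixed-point functors are monoidal, which is what makes the trick work. Second, and this is the heart of the matter, construct the auxiliary $G$--spectrum $W$: one wants a finite $G$--spectrum built so that $\Phi^K W$, for each subgroup $K$ in sight, has a prescribed Morava $K$--theory Euler characteristic — in particular one wants $k_m(\Phi^H W \sm \Phi^H X) = 0$, i.e.\ the (signed) $K(m)$--Euler characteristic of $\Phi^H W$ times that of $\Phi^H X$ should vanish, while keeping $k_n(\Phi^G W)$ under control, e.g.\ equal to $1$ or to a known constant. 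The classical source of such spectra is the modular representation theory of the symmetric groups: the Steinberg idempotent and related constructions in $\Sigma_N$ produce finite complexes (spectra) whose Morava $K$--theory Euler characteristics realize a wide range of prescribed values, and whose fixed points behave predictably. Third, feed $W \sm X$ into the $(G,H,n,m)$ Chromatic Smith Theorem: since $\widetilde K(m)_*((W\sm X)^H)=0$ by construction, we get $\widetilde K(n)_*((W\sm X)^G)=0$, i.e.\ $k_n(\Phi^G W)\cdot k_n(\Phi^G X) = k_n(\Phi^G(W\sm X)) = k_n(\mathrm{pt})$ via Künneth; unwinding this equality of dimensions, combined with the analogous bookkeeping at $H$ and the fact that the sequence $k_0 \le k_1 \le \cdots$ is nondecreasing, forces $k_m(X^H) \ge k_n(X^G)$.

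The main obstacle I expect is the construction and analysis of the auxiliary spectrum $W$ — i.e.\ showing that one can realize enough Morava $K$--theory Euler characteristics, with enough simultaneous control over all the relevant geometric fixed points $\Phi^K W$, using the symmetric group machinery. One has to be careful that smashing with $W$ does not accidentally destroy the inequality by, say, introducing cancellation in the wrong degree, and that the ``acyclicity after thickening'' can actually be arranged rather than merely approached. There is also a bookkeeping subtlety in passing between reduced and unreduced $K(n)$--homology and in handling the $m=\infty$ (ordinary mod $p$ homology) edge case, where one must invoke the Atiyah--Hirzebruch comparison rather than a Künneth theorem over a graded field. Once $W$ is in hand, however, the deduction is essentially formal, so I would expect the bulk of the work — and the only genuinely nontrivial input — to be the representation-theoretic construction underlying Jeff Smith's idea.
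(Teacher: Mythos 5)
Your proposal has the right provenance (Jeff Smith's idea, symmetric-group representation theory, monoidality of geometric fixed points, K\"unneth) but the central mechanism you describe cannot work, and it is not what the paper does. You propose to smash $X$ with an auxiliary finite $G$--spectrum $W$ chosen so that $(W\sm X)^{\Phi H}$ \emph{becomes} $K(m)_*$--acyclic "via Euler characteristic cancellation," and then to extract the inequality $k_m(X^H)\geq k_n(X^G)$ from the Smith conclusion at $G$. Since $K(m)_*$ is a graded field, the K\"unneth isomorphism gives $K(m)_*\bigl((W\sm X)^{\Phi H}\bigr)\cong K(m)_*(W^{\Phi H})\otimes_{K(m)_*}K(m)_*(X^{\Phi H})$, and a tensor product of nonzero vector spaces is never zero: dimensions multiply, Euler characteristics do not cancel, and no amount of smashing with a fixed $W$ can convert "small but nonzero dimension" into acyclicity. (If $W^{\Phi H}$ itself were $K(m)_*$--acyclic, the Smith hypothesis would hold for every $X$ and you would learn nothing about $X$; moreover your bookkeeping identity $k_n(W^{\Phi G})\cdot k_n(X^{\Phi G})=k_n(\mathrm{pt})$ is inconsistent with the acyclicity conclusion, which forces that product to be $0$.) So the step "arrange acyclicity after thickening" fails, and with it the whole deduction.

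The paper instead proves the contrapositive and applies the symmetric-group idempotents to smash powers of $X$ \emph{itself}, not to an auxiliary spectrum. Given $X$ with $k_m(X^{\Phi H})<k_n(X^{\Phi G})$, one chooses $d$ with $k_m(X^{\Phi H})<d\leq k_n(X^{\Phi G})$ and forms $F=e_d X^{\sm k(d)}$, where $e_d\in\Z_{(p)}[\Sigma_{k(d)}]$ is a Young-type idempotent with the threshold property that $e_d V_*^{\otimes k(d)}\neq 0$ if and only if $\dim_{\F_*}V_*\geq d$ (Propositions \ref{p=2 prop} and \ref{p odd prop}). Because geometric fixed points are monoidal and commute with the idempotent splitting, and by K\"unneth, $F^{\Phi H}$ is $K(m)_*$--acyclic while $F^{\Phi G}$ is not $K(n)_*$--acyclic, contradicting the $(G,H,n,m)$ Chromatic Smith Theorem. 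This threshold-detecting property of $e_d$ is the genuinely nontrivial representation-theoretic input your sketch is missing; the remaining work in the paper is exactly where you did not anticipate it, namely the odd-prime case (two idempotents $e_d$, $e_d'$ and a small numerical lemma to handle the even/odd grading) and the prime $2$ (a filtration argument to deal with W\"urgler's twisted K\"unneth structure), whereas the construction of a separate spectrum $W$ plays no role at all.
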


In \secref{proof of smith = floyd thm}, we prove this theorem in its contrapositive form.  An outline of the argument, an adaption of J.~ Smith's construction \cite[Appendix C]{ravenel orange book} to our equivariant setting, goes roughly as follows.  If the $(G,H,n,m)$ Chromatic Floyd Theorem is not true, there exists a $p$--local finite $G$--space $X$ with $k_m(X^H)< k_n(X^G)$.  We then consider a stable wedge summand $F$ of the $G$--space $X^{\sm k}$ having the form $F=eX^{\sm k}$, where $k$ is (roughly) determined by the number $k_n(X^G)$, and $e$ is a well chosen idempotent in the group ring $\Z_{(p)}[\Sigma_k]$ of the symmetric group $\Sigma_k$.  The properties of $e$ allow us to show that $F^H$ is $K(m)_*$--acyclic but $F^G$ is not $K(n)_*$--acyclic, and $F$ thus witnesses that the $(G,H,n,m)$ Chromatic Smith Theorem is not true.

\begin{rems} \label{floyd thm remarks}  {\bf (a)}\  When $n=m=\infty$ we learn that the classical Smith Theorem implies the classical Floyd Theorem. This seems to be a new observation, but is perhaps of only modest interest, as modern expositions of Smith theory -- see \secref{smith theory background sec} -- tend to prove Floyd's result first.

{\bf (b)} \   When $G$ is trivial, the theorem combined with Ravenel's acyclicity result cited above implies that if $m \geq n \geq 0$ then $k_m(X)\geq k_n(X)$ for all $p$--local finite spectra.  In fact,  \cite[Thm. 2.11]{ravenel conjs paper} already shows that this stronger result holds.  But one does learn something new for a related result: with a proof quite different from Ravenel's, in \cite{bousfield K(n) equivs}, Pete Bousfield showed that if $X$ is {\em any} nonempty space such that $\widetilde K(m)_*(X) = 0$, then $\widetilde K(n)_*(X) = 0$ for all $m\geq n \geq 1$. Part of our argument, stated as \thmref{smash summand theorem}, applies to all spaces, and we learn that if  $X$ is any space such that $\dim_{K(m)_*}K(m)(X)$ is finite, then, for all $m\geq n \geq 1$, $\dim_{K(n)_*} K(n)_*(X)$ will also be finite, and $k_m(X) \geq k_n(X)$.

\end{rems}

\begin{rem} \label{auto remark}  Problem \ref{chro smith problems} and Problem \ref{chro Floyd problems} are really problems about equivalence classes of pairs $(G,H)$ with $H\leq G$, where $(G_1,H_1) \sim (G_2,H_2)$ if there exists an isomorphism $\alpha: G_1 \xra{\sim} G_2$ such that $\alpha(H_1) = H_2$.  For given a $G_2$--space $X_2$, one can let $X_1$ be the $G_1$--space which is $X_2$ with $G_1$--action given by the formula $g\cdot x = \alpha(g)x$ for all $g \in G_1$.  Then $X_1^{G_1}$ will be homeomorphic to $X_2^{G_2}$, and $X_1^{H_1}$ will be homeomorphic to $X_2^{H_2}$.  We conclude that the chromatic Smith theorem holds for $(G_1,H_1,n,m)$ if and only if it holds for $(G_2,H_2,n,m)$, and similarly for the chromatic Floyd theorem.  Specializing to the case when $G_1=G_2=G$, we learn that if $\alpha$ is an automorphism of $G$, then the chromatic Smith theorem holds for $(G,H,n,m)$ if and only if it holds for $(G,\alpha(H),n,m)$\footnote{In the stable setting of \cite{balmer sanders}, this reasoning shows that $Out(G)$ will act as a group of homeomorphisms on the Balmer spectrum for the $G$--equivariant stable homotopy category.}.
\end{rem}

\subsection{Application to the computation of blue shift numbers}   The results of Mitchell and Ravenel show that if the $(G,H,n,n+r)$ Chromatic Smith Theorem holds then
\begin{itemize}
\item $r \geq 0$ and
\item the $(G,H,n,n+r^{\prime})$ Chromatic Smith Theorem holds for all $r^{\prime}\geq r$.
\end{itemize}

Translated from the stable setting, \cite[Proposition 7.1]{balmer sanders} is Neil Strickland's beautiful result\footnote{The authors of \cite{balmer sanders} thank Strickland for the inspiration for their Proposition 7.1, which appears, with essentially the same proof, as Proposition 16.9 in unpublished notes \cite{strickland} by Strickland dating from 2010.  (See also \S 3 of\cite{joachimi}, which was posted in preprint form in 2015.)} that for all finite $n$, the $(C_p,\{e\},n,n+1)$ Chromatic Smith Theorem holds. From this, one can deduce that, for all $H\leq G$,  the $(G,H,n,n+r)$ Chromatic Smith Theorem holds if the index of $H$ in $G$ is $p^r$.

This allows us to make the next definition, following the lead in \cite{6 author}.

\begin{defn}  Let $r_n(G,H)$ be defined to be the smallest $r$ such that the $(G,H,n,n+r)$ Chromatic Smith Theorem is true, i.e.~ the smallest $r$ so that for all $p$--local finite $G$--spaces $X$,
$$  \text{ if } \widetilde K(n+r)_*(X^H) = 0 \text{ then } \widetilde K(n)_*(X^G) = 0. $$
\end{defn}

From the above, we see that $0 \leq r_n(G,H) \leq \log_p [G:H]$. The paper \cite{balmer sanders} also shows that $1 \leq r_n(C_{p}, \{e\})$ so that $r_n(C_p,\{e\})=1$. The paper \cite{6 author} then goes further and shows that $r \leq r_n((C_{p})^r, \{e\})$ and $r_n(C_{p^k}, \{e\}) \leq 1$.  As we will explain in \secref{r numbers section}, these two results together imply general group theoretic bounds for all $H<G$:
$$ r_{-}(G,H) \leq  r_n(G,H) \leq r_{+}(G,H),$$
where $r_{-}(G,H)$ and  $r_{+}(G,H)$ are defined as follows.

\begin{defns} Let $H$ be a subgroup of a finite $p$--group $G$.

{\bf (a)} \ Let $r_{-}(G,H) = \text{ rank } G/H\Phi(G)$.  Here $H\Phi(G)< G$ is the subgroup generated by $H$ and the Frattini subgroup $\Phi(G)$, so that $G/H\Phi(G)$ is the maximal elementary abelian $p$-group quotient of $G$ with $H$ in the kernel.

{\bf (b)} \ Let $r_{+}(G,H)$ be the minimal $r$ such that there exists a chain of subgroups $ H = K_0 \lhd K_1 \lhd \dots \lhd K_r = G$ with each $K_{i-1}$ normal in $K_i$ and  $K_i/K_{i-1}$ cyclic.
\end{defns}

The lower bound for $r_{-}(G,H)$ agrees with the upper bound $r_{+}(G,H)$ in some cases,  notably whenever $G$ is abelian, when both bounds equal the rank of $G/H$.  But they don't agree in general, as one sees already when $G=D_8$, the dihedral group of order 8, and $H = C$, where $C$ is any of the noncentral subgroups of order 2: $r_{-}(D_8,C) = 1$ while $r_{+}(D_8,C) = 2$.

Our main application of \thmref{equivalence thm} is to use it to give lower bounds for $r_n(G,H)$ that sometimes improve upon $r_{-}(G,H)$. Meanwhile, there is cautious hope that $r_n(G,H)$ always equals the upper bound $r_{+}(G,H)$. Perhaps this hope is a bit perverse: at the end of \secref{r numbers section}, we give some examples showing how badly the function $r_+(G,H)$ behaves; e.g., it is not always monotone in the variable $H$, and it is not always additive under products of pairs.

There is also a conjecture that the value of $r_n(G,H)$ is independent of $n$ -- a phenomenon seen so far in all examples, but missing any conceptual explanation.  Of course, a proof that $r_n(G,H)$ always equals $r_{+}(G,H)$ would prove this.

To possibly show that $r_n(G,H) = r_{+}(G,H)$ in general, one needs to improve the lower bound.  By definition, a lower bound $r \leq r_n(G,H)$ means that there exists a $p$--local finite $G$--space $X$ with $X^G$ not $K(n)_*$--acyclic, but with $X^H$ $K(n+r-1)_*$--acyclic.  These are hard to find in the literature; to show that $r \leq r_n((C_{p})^r, \{e\})$, the authors of \cite{6 author} found one family of examples in the work of Greg Arone and Kathryn Lesh \cite{arone lesh} (really going back to work of Mitchell \cite{mitchell}).

A corollary to \thmref{equivalence thm} offers an easier way forward.

\begin{cor} \label{lower bound method cor} To show that $r \leq r_n(G,H)$, it suffices to find a $p$--local finite $G$--space $X$ with
$  k_{n+r-1}(X^H) < k_n(X^G)$.
\end{cor}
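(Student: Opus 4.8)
The plan is to read this off directly from \thmref{equivalence thm} and the definition of the blue shift number $r_n(G,H)$; all the substance is already packaged in that theorem, and what remains is an index count. So suppose we are handed a finite $G$--space $X$ with $k_{n+r-1}(X^H) < k_n(X^G)$, and set $m = n+r-1$. By definition of $k_m$ and $k_n$, this inequality says precisely that the $(G,H,n,m)$ Chromatic Floyd Theorem \emph{fails}.

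Next I would invoke \thmref{equivalence thm} in its contrapositive form: since the $(G,H,n,m)$ Chromatic Floyd Theorem fails, the $(G,H,n,m)$ Chromatic Smith Theorem also fails. Concretely, the construction behind \thmref{equivalence thm} promotes $X$ to a $G$--equivariant stable wedge summand $F$ of a smash power of a based model of $X$ with $\widetilde K(m)_*(F^H) = 0$ but $\widetilde K(n)_*(F^G) \neq 0$, so one even has an explicit witness; but for the corollary we only need the bare implication ``not Floyd $\Rightarrow$ not Smith''.

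Finally I would unwind the definition of $r_n(G,H)$: the $(G,H,n,m)$ Chromatic Smith Theorem holds if and only if $m \geq n + r_n(G,H)$. Since it fails for $m = n+r-1$, we must have $n+r-1 < n + r_n(G,H)$, i.e.\ $r-1 < r_n(G,H)$; as both sides are integers this forces $r \leq r_n(G,H)$, as claimed. The one point to keep straight is this off-by-one bookkeeping: the hypothesis ``$X^H$ is $K(n+r-1)_*$--acyclic'' is exactly the hypothesis of the Chromatic Smith Theorem for the pair $(m,n) = (n+r-1,n)$, and the lower bound $r \leq r_n(G,H)$ is by definition the assertion that this particular theorem fails. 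Beyond \thmref{equivalence thm} itself there is no real obstacle.
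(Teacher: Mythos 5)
Your proposal is correct and is exactly the argument the paper intends: the corollary is stated as an immediate consequence of \thmref{equivalence thm} (used in contrapositive form, ``not Floyd $\Rightarrow$ not Smith'') together with the definition of $r_n(G,H)$, and your off-by-one bookkeeping with $m=n+r-1$ matches the paper's remark that $r\leq r_n(G,H)$ means there is a finite $G$--space with $X^G$ not $K(n)_*$--acyclic but $X^H$ $K(n+r-1)_*$--acyclic.
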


As we now illustrate, applying this corollary to very classic constructions will allow us a much simplified proof, for all primes, of the existing lower bound found in \cite{6 author}, and then better bounds for an infinite family of new examples when $p=2$, including the $(D_8,C)$ example mentioned above.

\subsubsection{New proofs of old lower bounds at all primes} \label{old examples intro subsection}

If $\omega$ is a unitary representation of a finite group $G$, let $S(\omega)$ be the sphere of unit length vectors.  This is a $G$--space, and also a free $S^1$--space, where $S^1 \subset \C$ acts via scalar multiplication.  The actions by $G$ and $S^1$ commute, and we let $L_p(\omega) = S(\omega)/C_p$, where $C_p<S^1$ is the group of $p$th roots of 1.

Thus $L_p(\omega)$ will be a lens space with an unbased $G$--action. We will see that it is easy to analyze the fixed point space $L_p(\omega)^G$ and then to compute the size of its Morava $K$--theories.

We let $E_r$ denote the elementary abelian $p$--group $C_p^r$.

\begin{ex} \label{el ab example}  Let $\rho_r^{\C}$ denote the complex regular representation of $E_r$.  If we let $\omega = p^n\rho_r^{\C}$, then
$$  k_{n+r-1}(L_p(\omega)_+) = 2p^{n+r-1}$$
which is less than
$$  k_n(L_p(\omega)^{E_r}_+) = 2p^{n+r}. $$
\end{ex}

Details will be in \secref{examples section}.

Thanks to \corref{lower bound method cor}, we recover the lower bound from \cite{6 author}, with our elementary examples replacing the much more delicate and technical examples of \cite[Thm.2.2]{6 author}.

\begin{thm} \label{el ab thm}  $r \leq r_n(E_r, \{e\})$. \label{el ab lower bound theorem}
\end{thm}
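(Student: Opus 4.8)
The plan is to combine \corref{lower bound method cor} with the explicit family of \exref{el ab example}. Take $G = E_r$, $H = \{e\}$, and let $X = L_p(\omega)$ be the lens space attached to $\omega = p^n\rho_r^{\C}$. Granting the two assertions of \exref{el ab example}, we then have a finite $E_r$--space satisfying
$$ k_{n+r-1}(X^{\{e\}}) \;=\; 2p^{n+r-1} \;<\; 2p^{n+r} \;=\; k_n(X^{E_r}) $$
(the inequality being just $p>1$), so \corref{lower bound method cor} immediately gives $r \leq r_n(E_r,\{e\})$. Thus the entire proof reduces to verifying the two Morava $K$--theory computations of \exref{el ab example}, which is where the real work lies and why details are deferred to \secref{examples section}.

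For that verification I would first identify the spaces. Since $\dim_{\C}\omega = p^n\cdot p^r = p^{n+r}$, the unit sphere $S(\omega)$ is $S^{2p^{n+r}-1}$ and $X = L_p(\omega)$ is the standard $(2p^{n+r}-1)$--dimensional mod $p$ lens space $S^{2p^{n+r}-1}/C_p$. For the fixed points, note that a point of $S(\omega)/C_p$ is $E_r$--fixed exactly when it is represented by a unit vector spanning a complex line on which $E_r$ acts through some character $\chi\colon E_r \to C_p \subset S^1$; as $E_r$ has exponent $p$ every irreducible character takes values in $C_p$, and as $\omega = p^n\rho_r^{\C}$ contains each of the $p^r$ characters of $E_r$ with multiplicity $p^n$, one gets
$$ X^{E_r} \;=\; \Bigl(\coprod_{\chi \in \Hom(E_r,\C^*)} S(\omega_\chi)\Bigr)/C_p \;\cong\; \coprod_{i=1}^{p^r} L^{2p^n-1}(p), $$
a disjoint union of $p^r$ copies of the standard $(2p^n-1)$--dimensional lens space (each isotypic summand $\omega_\chi$ of $\omega$ being a copy of $\C^{p^n}$). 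The one point needing attention is that the scalar $C_p$--action is free and commutes with $E_r$, hence descends to the quotient without identifying these components.

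It then remains to record the classical formula $k_m\bigl(L^{2N-1}(p)\bigr) = 2\min(N, p^m)$, which I would get from the Atiyah--Hirzebruch spectral sequence converging to $K(m)_*\bigl(L^{2N-1}(p)\bigr)$: its $E^2$--page has one copy of $K(m)_*$ in each degree $0, 1, \dots, 2N-1$, and the only input is the classical fact that in the AHSS for $BC_p$ the first nonzero differential is $v_m\otimes Q_m$ (equivalently, $K(m)^*(BC_p)$ is free of rank $p^m$); since $L^{2N-1}(p)$ is the $(2N-1)$--skeleton of $BC_p$, naturality of the skeletal inclusion forces the same differential here, after which the spectral sequence collapses for degree reasons. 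Plugging $N = p^{n+r}$ and $m = n+r-1$ yields $k_{n+r-1}(X) = 2\min(p^{n+r},p^{n+r-1}) = 2p^{n+r-1}$, and plugging $N = p^n$ and $m = n$ into each of the $p^r$ components yields $k_n(X^{E_r}) = p^r\cdot 2\min(p^n,p^n) = 2p^{n+r}$, as claimed. I do not expect any step to present a serious obstacle: once \corref{lower bound method cor} is in hand (itself the payoff of \thmref{equivalence thm}) the content is entirely classical, the only mild subtleties being the bookkeeping that decomposes the $E_r$--fixed set into standard lens spaces --- where one uses that $E_r$ has exponent $p$ --- and invoking the AHSS collapse for lens spaces in the right generality.
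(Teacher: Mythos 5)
Your proposal is correct and follows essentially the same route as the paper: combine \corref{lower bound method cor} with the lens space $L_p(p^n\rho_r^{\C})$, identify the total space as $L_p(\C^{p^{n+r}})$ and the fixed points as $p^r$ copies of $L_p(\C^{p^n})$ via the isotypical decomposition, and evaluate $k_m$ of standard lens spaces by the Atiyah--Hirzebruch spectral sequence with first differential $v_mQ_m$. The only small caveat is that the final collapse is not purely ``for degree reasons'': degree/filtration considerations rule out higher differentials \emph{into} the surviving even classes, but to rule out differentials \emph{out of} them one should note they are permanent cycles (being restricted along $L_p(\C^d)\hra BC_p \ra \C\PP^\infty$, which is exactly the naturality you already invoke, and is how the paper argues).
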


As already mentioned above, in \secref{r numbers section} we will show that from this, one can deduce that $r_{-}(G,H) \leq r_n(G,H)$ for all $n$ and $H<G$.

\subsubsection{New lower bounds for the extraspecial $2$--groups} \label{new examples intro subsection}

Let $D_8$ be the dihedral group of order 8, and let $C<D_8$ be any one of the four noncentral subgroups of order 2. (These are all equivalent under automorphisms of $D_8$.)   Then $r_{-}(D_8,C)$ = 1 while $r_+(D_8,C) = 2$, and this is the simplest example for which the blue shift numbers $r_n(D_8,C)$ can not be determined by the results in \cite{6 author}.

This example turns out to fit into an infinite family of examples.  Let $\widetilde E_{2r}$ denote the central product of $r$ copies of $D_8$.  This group is the extraspecial $2$--group of order $2^{1+2r}$ associated to a quadratic form $q: E_{2r} \ra C_2$ of Arf invariant 0.  As such, it is a nonabelian central extension
$ C_2 \ra \widetilde E_{2r} \ra E_{2r}$.

The analogue of $C < D_8$ is then $W_r < \widetilde E_{2r}$ where $W_r$ is any elementary abelian subgroup of rank $r$ that does not contain the central $C_2$. (All such subgroups are equivalent.)

It isn't hard to check that $r_{-}(\widetilde E_{2r},W_r)=r$ and $r_{+}(\widetilde E_{2r},W_r)=r+1$ so that $r \leq r_n(\widetilde E_{2r},W_r) \leq  r+1$.
By tweaking the construction in the previous subsection, we show that blue shift numbers attain the upper bound.

The tweak is as follows.  If $\omega$ is now a real representation of a finite 2--group $G$, let $\RP(\omega)$ denote the associated projective space.  As before, it is easy to analyze the fixed point space $\RP(\omega)^G$, and then to compute the size of its Morava $K$--theories.

\begin{ex} \label{extraspecial 2 group ex}   Let $\widetilde \rho_{2r}$ be the real regular representation of $E_{2r}$, pulled back to $\widetilde E_{2r}$. This is the sum of the $2^{2r}$ distinct one dimensional real representations of $\widetilde E_{2r}$, and $\widetilde E_{2r}$ has one more irreductible real representation $\Delta_r$ of dimension $2^r$. If we let $\omega = 2^{n+1}\widetilde \rho_{2r} \oplus \Delta_r$, then
$$  k_{n+r}(\RP(\omega)^{W_r}_+) = 2^{n+1+2r}-2^r$$
which is less than
$$ k_n(\RP(\omega)^{\widetilde E_{2r}}_+) = 2^{n+1+2r}. $$
\end{ex}

Again invoking \corref{lower bound method cor}, this example has the following consequence.

\begin{thm} \label{extraspecial group thm} For all $n$, $r_n(\widetilde E_{2r},W_r) = r+1 = r_+(\widetilde E_{2r},W_r)$.
\end{thm}

A variant of this last example will prove the following.

\begin{thm} \label{extraspecial group product with el ab thm} For all $n$,
$$r_n(\widetilde E_{2r} \times E_s,W_r \times \{e\}) = r+s+1  = r_+(\widetilde E_{2r}\times E_s,W_r \times \{e\}).$$
\end{thm}

This last theorem suffices to allow us to deduce the following consequence.

\begin{thm} \label{good family thm} Let $G$ be any 2-group fitting into a central extension
$$ C_2 \ra G \ra E$$
with $E$ elementary abelian.   For all $K<H<G$, $r_n(H,K) = r_+(H,K)$ for all $n$.
\end{thm}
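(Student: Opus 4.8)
The plan is to combine the general bounds $r_-(H,K)\le r_n(H,K)\le r_+(H,K)$ from \secref{r numbers section} with \thmref{extra special group product with el ab thm}, via a structural analysis of the pairs $(H,K)$ sitting inside $G$. First I would reduce to the case $H=G$: since $[H,H]\le[G,G]$ for any subgroup $H<G$, either $H$ is elementary abelian, in which case $r_-(H,K)=r_n(H,K)=r_+(H,K)=\text{rank}(H/K)$ by the abelian case of \cite{6 author}, or $C_2\le H$ and $H$ is itself a central extension of an elementary abelian group by $C_2$; applying the desired conclusion to such an $H$ in place of $G$, it suffices to show that for $G$ nonabelian of the stated form and every $K<G$ one has $r_n(G,K)=r_+(G,K)$. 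Write $C_2=\langle z\rangle$, $E=G/C_2$, let $q\colon E\to\F_2$ be the quadratic form $q(\bar g)=g^2$ (well defined since $g^2\in C_2$), and let $b$ be its polar form, the commutator pairing $b(\bar g,\bar h)=[g,h]$.

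I would then split according to whether $z\in K$. If $C_2\le K$, then $\Phi(G)=C_2\le K$, so $G/K$ is an elementary abelian quotient of the nilpotent group $G$; lifting a composition series of $G/K$ yields a subnormal chain from $K$ to $G$ with every subquotient $\cong C_2$ and of length $\text{rank}(G/K)=r_-(G,K)$, whence $r_-(G,K)=r_+(G,K)$ and the bounds finish. If $z\notin K$, then $K\cap C_2=1$, which forces $[K,K]=1$ and $k^2=1$ for all $k\in K$, so $K$ is elementary abelian, maps isomorphically onto $\bar K\le E$, and satisfies $q|_{\bar K}=0=b|_{\bar K}$; thus $\bar K$ is totally singular. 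There is one more easy subcase here: if some $\bar y\in\bar K^{\perp}$ (orthogonal complement with respect to $b$) has $q(\bar y)=1$, then any lift $y$ of $\bar y$ has $y^2=z$, normalizes $K$, satisfies $\langle K,y\rangle/K\cong C_4$, and $z\in\langle K,y\rangle$; lifting a composition series of the elementary abelian quotient $G/\langle K,y\rangle$ gives a subnormal chain from $K$ to $G$ of length $1+\text{rank}(G/\langle K,y\rangle)=r_-(G,K)$, so again $r_-(G,K)=r_+(G,K)$.

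The remaining case — $z\notin K$ and $q$ identically zero on $\bar K^{\perp}$ — is where the real content lies, and I expect it to be the main obstacle. The crux is a statement purely about quadratic forms over $\F_2$: writing $R$ for the radical of $b$, one has $\dim\bar K^{\perp}=\dim E-\dim\bar K+\dim(\bar K\cap R)$, and $q$ on $\bar K^{\perp}$ is pulled back from $q$ on the nondegenerate quotient $E/R$; requiring this restriction to vanish forces $E/R$ to contain no anisotropic plane (so $q$ is hyperbolic $\perp$ trivial) and forces the image of $\bar K$ in $E/R$ to be a \emph{maximal} totally singular subspace. Back in group-theoretic terms this says $G\cong\widetilde E_{2a}\times E_t$ for suitable $a,t$, with $\bar K$ corresponding to $W_a\oplus B_0$ for some $B_0\le E_t$. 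Choosing a complement $E_t=B_0\oplus E_{t'}$ then identifies the pair $(G,K)$ with the product of $(\widetilde E_{2a}\times E_{t'},\,W_a\times\{e\})$ and a pair $(B,B)$, where $B$ is the elementary abelian subgroup corresponding to $B_0$ and $B\le K$.

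To finish I would invoke the product invariance of both invariants: for a direct product and any $K_1\le G_1$ one has $r_n(G_1\times G_2,K_1\times G_2)=r_n(G_1,K_1)$, and likewise for $r_+$. The $r_+$ equality is immediate from the chain description, and the $r_n$ equality holds because inflation along $G_1\times G_2\to G_1$ gives one inequality while applying $(-)^{G_2}$ to a finite $G_1\times G_2$--space $Y$ — using $(Y^{G_2})^{K_1}=Y^{K_1\times G_2}$ and $(Y^{G_2})^{G_1}=Y^{G_1\times G_2}$ — gives the other. Applying this with $G_2=B$ and then \thmref{extra special group product with el ab thm} gives $r_n(G,K)=r_n(\widetilde E_{2a}\times E_{t'},W_a\times\{e\})=a+t'+1=r_+(\widetilde E_{2a}\times E_{t'},W_a\times\{e\})=r_+(G,K)$, as required. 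Apart from the quadratic-forms lemma and this product-invariance lemma, all that remains is routine group theory — the $r_+$ computations in each case and the check that $B$ is a genuine direct factor of $G$ contained in $K$ — and in particular no equivariant homotopy theory beyond \exref{extra special 2 group ex}/\thmref{extra special group product with el ab thm} is needed. (This also explains why no new construction is required: for the minus-type extraspecial groups, and for groups with a $C_4$ in the center, the form contains an anisotropic plane, resp.\ a radical vector on which $q=1$, which always produces a $\bar y\in\bar K^{\perp}$ with $q(\bar y)=1$, so those pairs never fall into the hard case.)
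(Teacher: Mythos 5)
Your proposal is correct and follows essentially the same route as the paper: your middle case (some $\bar y\in\bar K^{\perp}$ with $q(\bar y)=1$) is precisely the paper's lemma that an element of order $4$ in $C_G(K)$ forces $r_-(G,K)=r_+(G,K)$, and your remaining hard case reduces, exactly as in the paper, to the pairs $(\widetilde E_{2r}\times E_s, W_r\times\{e\})$ handled by \thmref{extra special group product with el ab thm}. The differences are organizational rather than substantive: you phrase the group theory via the quadratic form on $G/C_2$ (Witt-type straightening in place of the classification of extraspecial $2$--groups and the centralizer computation), and you split off $K\cap Z(G)$ as a central direct factor using product invariance of $r_n$ and $r_+$, which is the same quotient lemma the paper applies inside its induction on $|G|$.
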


The details will be in \secref{ex special 2-groups section}.

This computation of these blue shift numbers resolves all open questions about the topology of the Balmer spectrum for any such $G$.

\subsubsection{A new general lower bound theorem for $2$--groups} \label{general lower bound subsection}

An analysis of our argument for the extraspecial 2-groups leads to a general theorem that improves the lower bound for $r_n(G,H)$ for many other groups too.

To state this, we need a little bit of notation.  Given a finite 2--group $H$, let $e_H \in \R[H]$ be the central idempotent
$$ e_H = \frac{1}{|\Phi(H)|}\sum_{h \in \Phi(H)} h.$$
If $\omega$ is a real representation of $H$, then $e_H\omega$ is the maximal direct summand of $\omega$ on which $\Phi(H)$ acts trivially, so can be viewed as a real representation of $H/\Phi(H)$: see the paragraph before \corref{fixed point cor}.

\begin{thm} \label{general lower bound theorem}  Let $H$ be a nontrivial proper subgroup of a finite 2--group $G$ such that $\Phi(H) = \Phi(G)\cap H$, or, equivalently, $H/\Phi(H) \ra G/\Phi(G)$ is monic.

If $G$ has an irreducible real representation $\Delta$ such that $e_H\Res^G_H(\Delta)$ is the regular real representation of $H/\Phi(H)$, then, for all $n$,
$$ r_n(G,H) \geq r_-(G,H) + 1.$$
\end{thm}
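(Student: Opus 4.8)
The plan is to produce, for each $n$, a finite $G$--space $X$ witnessing the lower bound via \corref{lower bound method cor}: we need $k_{n+r-1}(X^H) < k_n(X^G)$ where $r = r_-(G,H)+1$, i.e. $r-1 = r_-(G,H) = \operatorname{rank}(G/H\Phi(G))$. Following the model of \exref{extra special 2 group ex}, I would take $X = \RP(\omega)$ for a carefully chosen real representation $\omega$ of $G$, built from two pieces: a large multiple of a representation that kills fixed points efficiently (the regular-type piece), and a copy of the given irreducible $\Delta$ whose restriction to $H$ has the special idempotent property. So I would set $\omega = 2^{n+N}\widetilde\rho \oplus \Delta$ for a suitable ``regular'' real representation $\widetilde\rho$ of $G/\Phi(G)$ pulled back to $G$, with $N$ chosen so the arithmetic works out, and $\Delta$ as in the hypothesis.

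**The fixed-point computations.** The two key computations are: (i) $\RP(\omega)^G$, and (ii) $\RP(\omega)^H$. For a real $G$-representation $\omega$, $\RP(\omega)^G = \coprod_{\chi} \RP(\omega^\chi)$ where $\chi$ ranges over the (real) characters and $\omega^\chi$ is the $\chi$-isotypic summand — but more precisely $S(\omega)^G/C_2$ decomposes according to which irreducible isotypic piece a fixed line lies in; only the pieces where $G$ acts through a one-dimensional real character survive, and these must be arranged to be mutually orthogonal so the fixed set is a disjoint union of smaller projective spaces. For $\RP(\omega)^G$: since $\Phi(G)$ acts trivially only on the $\widetilde\rho$-part and $\Delta$ has dimension $>1$, the $G$-fixed lines all lie in $2^{n+N}\widetilde\rho$, which as a $G/\Phi(G)$-representation is a multiple of the real regular representation of the elementary abelian group $E := G/\Phi(G)$; hence $\RP(\omega)^G = \RP((2^{n+N}\widetilde\rho)^G)$, a single real projective space of dimension $2^{n+N}-1$ (one trivial summand per... adjust multiplicities so this comes out right). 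For $\RP(\omega)^H$: here the $\Delta$-piece contributes, because $e_H\Res^G_H\Delta$ being the regular real representation of $H/\Phi(H)$ means $\Res^G_H\Delta$ has a summand on which $\Phi(H)$ acts trivially and which, as an $H/\Phi(H)$-rep, is regular — so its $H$-fixed subspace is $1$-dimensional and, crucially, it sits in a *different* isotypic component of $\Res^G_H\omega$ than the $\widetilde\rho$-part (using $\Phi(H) = \Phi(G)\cap H$ so that characters of $H$ trivial on $\Phi(H)$ that extend pieces of $\widetilde\rho$ versus the character picked out by $\Delta$ are distinguished). Then $\RP(\omega)^H$ is a disjoint union of two projective spaces, one of dimension coming from $(2^{n+N}\widetilde\rho)^H$ and one of dimension $0$ from the $\Delta$-piece. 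I would use $k_n(\RP^{d-1}) = $ (number of $2$-adic... ) — recall $K(n)_*$ of $\RP^{m}$ has dimension determined by $m \bmod 2^{n+1}$-type formulas; the precise identity I would import from \secref{examples section}, where $\RP(\omega)$ computations are carried out for \exref{extra special 2 group ex}.

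**Making the inequality work.** With $\RP(\omega)^G$ a single projective space of some dimension $D$ and $\RP(\omega)^H = \RP^{D'} \sqcup \RP^{0} = \RP^{D'}\sqcup \text{pt}$, I need $k_{n+r-1}(\RP^{D'}) + 1 < k_n(\RP^{D})$. The point of subtracting off $\Delta$ from $\widetilde\rho$'s multiple: the restriction $\Res^G_H\widetilde\rho$ versus $\widetilde\rho$ on $G$ have ranks of fixed subspaces differing by exactly $\operatorname{rank}(G/H\Phi(G)) = r-1$ worth of "doubling", so $D \approx 2^{n+N}$ and $D' \approx 2^{n+N - (r-1)}$ roughly, and then the Morava $K$-theory dimensions — which for these projective spaces grow like $2^{\text{(something)}+j}$ in $K(j)$ — give $k_n(\RP^D)$ of order $2^{n+N}$ while $k_{n+r-1}(\RP^{D'})$ is of order $2^{(n+r-1) + (N-r+1)} = 2^{n+N}$ as well, so the $+1$ and the precise constants matter; choosing $N$ large enough and tracking the exact closed-form for $k_j(\RP^{m})$ makes the strict inequality hold. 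Finally, \corref{lower bound method cor} converts this into $r_n(G,H) \ge r = r_-(G,H)+1$.

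**Main obstacle.** The hard part is item (ii): carefully decomposing $\Res^G_H\omega$ into real isotypic components and verifying that the $H$-fixed lines genuinely split into the $\widetilde\rho$-contribution and an isolated point from the $\Delta$-piece, with no interference. This requires the hypothesis $\Phi(H) = \Phi(G)\cap H$ to guarantee that the one-dimensional real characters of $H$ through which $G$ acts on $\widetilde\rho$-summands are exactly the characters trivial on $\Phi(H)$ that come from $G/\Phi(G)$, and that the regular $H/\Phi(H)$-summand $e_H\Res^G_H\Delta$ occupies *all* of those characters once — so that after adding it to $2^{n+N}\widetilde\rho$ one still has control of each isotypic multiplicity, and the fixed set doesn't acquire extra components or dimensions. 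Getting the bookkeeping of multiplicities right — ensuring $\RP(\omega)^G$ stays a single projective space and that the arithmetic of Morava $K$-theory dimensions yields a *strict* inequality with the extra $+1$ from the point — is where the real work lies; the topology ($\RP(\omega)$ formalism, $K(n)_*$ of projective spaces) is all routine given \secref{examples section}.
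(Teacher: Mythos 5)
Your overall framework is the paper's: take $X=\RP(\omega)$ with $\omega = 2^{n+1}\widetilde\rho_G\oplus\Delta$ and feed the inequality $k_{n+a-b}(\RP(\omega)^H) < k_n(\RP(\omega)^G)$ into \corref{lower bound method cor}, where $a$ and $b$ are the ranks of $G/\Phi(G)$ and $H/\Phi(H)$, so $a-b=r_-(G,H)$. But the mechanism you propose for the strict inequality is backwards. Since $e_H\Res^G_H(\Delta)$ is the \emph{regular} representation of $H/\Phi(H)$, it contains every one--dimensional real character of $H$ trivial on $\Phi(H)$ exactly once --- and these are precisely the characters already occurring in $\Res^G_H(\widetilde\rho_G)=2^{a-b}\widetilde\rho_H$. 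So the $\Delta$--piece does not land in a ``different isotypic component'' and does not contribute an isolated point to $\RP(\omega)^H$; it adds one dimension to \emph{each} of the $2^b$ isotypic components already present, raising each from dimension $2^{n+a-b+1}$ to the odd number $2^{n+a-b+1}+1$. By \propref{RP prop} this parity flip is exactly what lowers the Morava $K$--theory: $k_{n+a-b}$ of each component drops from $2^{n+a-b+1}$ to $2^{n+a-b+1}-1$ (the $Q_{n+a-b}$--differential now acts nontrivially), so $k_{n+a-b}(\RP(\omega)^H)=2^{n+a+1}-2^b$. Without this cancellation there is nothing to prove: the $\widetilde\rho_G$--part alone gives exact equality $2^b\cdot 2^{n+a-b+1}=2^a\cdot 2^{n+1}$ (the elementary abelian computation, which only yields $r_-(G,H)$), and in your picture adding $\Delta$ can only \emph{increase} the $H$--side by the extra point, so the strict inequality you need would fail rather than hold. (Your closing paragraph, where you say $e_H\Res^G_H(\Delta)$ ``occupies all of those characters once,'' is the correct statement and contradicts the disjoint--point picture you use to run the argument.)

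The $G$--side computation also needs repair: $\RP(\omega)^G$ consists of the lines on which $G$ acts through \emph{some} real character (the formula you yourself quote at the start of (i)), not the lines inside the fixed subspace of $\omega$, so it is a disjoint union of $2^a$ copies of $\RP(\R^{2^{n+1}})$ --- each character of $G/\Phi(G)$ occurs $2^{n+1}$ times in $2^{n+1}\widetilde\rho_G$ and not at all in the irreducible $\Delta$ of dimension $>1$ --- rather than a single projective space of dimension $2^{n+1}-1$. This gives $k_n(\RP(\omega)^G)=2^a\cdot 2^{n+1}=2^{n+a+1}$, the number against which the $H$--side must be compared. These two corrected computations are precisely \lemref{Morava K-theory of G fixed point lemma} and \lemref{Morava K-theory of H fixed point lemma} in the paper; as written, your proposal is missing the idea that actually produces the $+1$.
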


The proof is in \secref{examples section}.

\begin{ex}  If $H = W_r < \widetilde E_{2r}=G$, then $\Delta_r$ satisfies the hypothesis of the theorem.
\end{ex}

\begin{ex}  Let $G$ be the semidirect product $C_2^3\rtimes C_4$, with $C_4$ acting faithfully on $C_2^3$, the group with GAP label 32 \#6.  Let $H<G$ be the cyclic subgroup of order 4 which is GAP subgroup \#24.  Then $\Phi(H) = \Phi(G)\cap H$, and $G$ has three distinct irreducible real representations which satisfy the hypothesis of the theorem: two 2--dimensional ones that are pulled back from a quotient map $G \twoheadrightarrow D_8$, and one that is  faithful of dimension 4.

One computes that $r_-(G,H) = 1$ and $r_+(G,H)$ = 3.  The theorem then tells us that $r_n(G,H)$ is either 2 or 3.
\end{ex}

\subsection{Further application of \thmref{equivalence thm}}

Our applications of \thmref{equivalence thm} in the last two subsections use \thmref{equivalence thm} in its contrapositive form: `if the $(G,H,n,m)$ Chromatic Floyd Theorem is not true then the $(G,H,n,m)$ Chromatic Smith Theorem is not true'.

When combined with the upper bound $r_n(G,H) \leq r_{+}(G,H)$, the direct statement -- `if the $(G,H,n,m)$ Chromatic Smith Theorem is  true then the $(G,H,n,m)$ Chromatic Floyd Theorem is true' --  implies the following theorem.

\begin{thm} \label{cyclic chromatic floyd theorem}  If $X$ is a $p$--local finite $G$--space, then, for all $r \geq r_+(G,H)$
$$k_{n+r}(X^H) \geq k_n(X^G) \text{ for all } n.$$
\end{thm}

This has a variety of interesting applications.

\subsubsection{Application to $K(n)_*$--homology disks}

For this application, and our next, we need to first note that Floyd's Euler characteristic theorem (\ref{Euler char thm}) implies chromatic versions.  If $X$ is space such that $\dim_{K(n)_*}K(n)_*(X)$ is finite, let $\chi_n(X)$ be the Euler characteristic of the graded $K(n)_*$--vector space $K(n)_*(X)$.  It is a standard lemma that the Euler characteristic of a finite chain complex of finite dimensional vector spaces equals the Euler characteristic of the resulting graded homology groups.  Applying this to the AHSS, we learn that if $H_*(X;\Z/p)$ is finite dimensional then $\chi(X) = \chi_n(X)$.  Thus (\ref{Euler char thm}) implies the following.

\begin{prop} \label{chrom Euler char prop} If $G$ is a finite $p$--group, $H$ a subgroup, and $X$ a $p$--local finite $G$--space, then $ \chi_n(X^H) \equiv \chi_n(X^G) \mod p.$
\end{prop}

As mentioned in the introduction, Floyd's theorems (\ref{Floyd thm}) and (\ref{Euler char thm}) imply the unbased Smith theorem.  The reasoning showing this in \cite[Section III.5]{bredon trans groups} adapts to show that \thmref{cyclic chromatic floyd theorem} and \propref{chrom Euler char prop} imply an analogous unbased chromatic Smith theorem.

To explain this, we need to describe the unbased $G$--spaces that we will consider.

\begin{defn} Let $G$ be a finite $p$--group.  An unbased $G$--space $X$ is {\em admissible} if $X_+$ is a $p$--local finite $G$--space.
\end{defn}

\begin{thm} \label{unbased chom smith thm}
Let $H$ be a subgroup of a finite $p$--group $G$, and let $X$ be an admissible $G$--space. If $r \geq r_+(G,H)$ and $K(n+r)_*(X^H) \simeq K(n+r)_*$ then $K(n)_*(X^G) \simeq K(n)_*$.
\end{thm}
\begin{proof} By assumption, $k_{n+r}(X^H_+) = 1$. \thmref{cyclic chromatic floyd theorem} then implies that $k_{n}(X^G_+) \leq 1$, and so is 1 or 0. The possibility that $k_{n}(X^G_+)=0$ (i.e. $X^G = \emptyset$) is ruled out by \propref{chrom Euler char prop}.
\end{proof}

Specializing to the case when $G$ is cyclic, $H = \{e\}$, and $n=0$, we learn the following.

\begin{cor} Let $C$ be a finite cyclic $p$--group, and $X$ an admissible $C$--space.  If $X$ is acyclic in  mod $p$ $K$--theory, then $X^C$ will be rationally acyclic.
\end{cor}

\begin{ex}  This corollary would apply to any admissible cyclic action on a space homotopy equivalent to the cofiber of an unstable Adams map between mod $p^k$ Moore spaces, for any $p$ and $k$.
\end{ex}

\subsubsection{Application to $K(n)_*$--homology spheres}

Similar to the previous subsection, we get the chromatic analogues of classical theorem about actions on mod $p$--homology spheres.  A space $X$ is a $K(n)_*$--homology sphere if $\dim_{K(n)_*}K(n)_*(X) = 2$.

\begin{thm} \label{unbased sphere thm}
Let $H$ be a subgroup of a finite $p$--group $G$, and let $X$ be an admissible $G$--space. If $r \geq r_+(G,H)$ and $X^H$ is a $K(n+r)$--homology sphere then $X^G$ will be a $K(n)_*$--homology sphere, or possibly empty if $p=2$.
\end{thm}
\begin{proof} This is similar to the proof of \thmref{unbased chom smith thm}. We are assuming that $k_{n+r}(X^H_+) = 2$, so \thmref{cyclic chromatic floyd theorem} implies that $k_{n}(X^G_+) \leq 2$. \propref{chrom Euler char prop} shows that $k_{n}(X^G_+)=1$ can't happen, and $k_{n}(X^G_+)=0$ could only happen if $p=2$.
\end{proof}

Specializing, as before, to the case when $G$ is cyclic, $H = \{e\}$, and $n=0$, we learn the following.

\begin{cor} Let $X$ be a mod $p$ $K$--theory sphere. Then the fixed point space of any admissible action of a finite cyclic $p$--group on $X$ will be a rational homology sphere, or possibly empty if $p=2$.
\end{cor}

\begin{ex} \label{Wu manifold thm}  This corollary applies to the 5-dimensional Wu manifold $M = SU(3)/SO(3)$ to show that if $X$ has an admissible action of $C_2$, then $M^{C_2}$ will be a rational homology sphere.  The fact that $M$ is a mod 2 $K$--theory sphere is an easy consequence of the action of $Sq^1$ and $Sq^2$ on $H^*(M;\Z/2)$, using the AHSS.  The possibility that $M^{C_2}= \emptyset$, i.e.~ $M$ has a free involution, can then be easily ruled out: $M$ is not the boundary of a 6--dimensional compact manifold, but any closed manifold with a free involution {\em is} a boundary.
\end{ex}

\subsubsection{Application to the calculation of $K(n)_*(X)$ using the AHSS}

Let $n\geq 1$.  If $X$ is a based space, the (reduced) Atiyah--Hirzebruch spectral sequence converging to $\widetilde K(n)^*(X)$ has $E_2^{*,*} = \widetilde H^*(X;\Z/p)[v_n^{\pm 1}]$, and has first possible nonzero differential given by $d_{2p^n-1}(x) = v_nQ_n(x)$,  where $Q_n$ is the unique primitive in the mod $p$ Steenrod algebra of degree $2p^n-1$ and satisfies $(Q_n)^2=0$. (When $p=2$, $Q_0 = Sq^1$ and $Q_n = [Sq^{2^n},Q_{n-1}]$.)

It follows that, if $\dim_{\Z/p} H^*(X;\Z/p)$ is finite, and we let
$$ H(X;Q_n) = \frac{\ker \{Q_n: \widetilde H^*(X;\Z/p) \ra \widetilde H^*(X;\Z/p)\}}{\im \{Q_n: \widetilde H^*(X;\Z/p) \ra \widetilde H^*(X;\Z/p)\}},$$
and then let $k_{Q_n}(X) = \dim_{\Z/p} H(X;Q_n)$, then  $k_{Q_n}(X)$ will equal the dimension of the $2p^n$th page of the AHSS as a $K(n)^*$--vector space. Thus we get the upper bound $k_n(X) \leq k_{Q_n}(X)$.

Meanwhile, \thmref{cyclic chromatic floyd theorem} implies that the lower bound $ k_{n-1}(X^C) \leq k_n(X)$
if $X$ admits an action of a cyclic $p$--group $C$ making $X$ into a $p$--local finite $C$--space.

If the lower bound matches the upper bound, we get the conclusion of the next theorem.

\begin{thm}  \label{AHSS collapse theorem} Let $n\geq 1$.  Suppose a space $X$ admits an action of a cyclic $p$--group $C$ making $X$ into a $p$--local finite $C$--space,  such that
$ k_{n-1}(X^C) = k_{Q_n}(X)$.
Then the AHSS computing $K(n)^*(X)$ collapses at $E_{2p^n}^{*,*}$ and $k_n(X) = k_{Q_n}(X)$.
\end{thm}

\begin{ex} \label{grassmanian calculation} In \cite{kuhn lloyd grassmanians}, we show that this criterion can be used to calculate $k_n(Gr_d(\R^m)_+)$ in many, and conjecturally all, cases, using group actions constructed as in \secref{new examples intro subsection}.  (Here $p=2$.)  In particular,
\begin{itemize}
\item well chosen $m$--dimensional real representations of $C_2$ are used to show that $k_n(Gr_d(\R^m)_+) = \binom{m}{d}$ if $m \leq 2^{n+1}$, and
\item well chosen $m$--dimensional real representations of $C_4$ are used to show that $k_n(Gr_2(\R^m)_+) = \binom{2^{n+1}-\epsilon}{2} + l$ if  $m = 2^{n+1}+2l -\epsilon$ with $l>0$ and $\epsilon$ equal to 0 or 1.
\end{itemize}
\end{ex}

\subsection{Organization of the rest of the paper}  Section \ref{background section} has some needed background information about classical Smith theory, the equivariant stable category, and Morava K-theories.

In \secref{balmer section}, we translate our chromatic Smith theorem and Floyd theorem problems into the stable setting. The former is the problem whose solution answers remaining questions about the topology of the Balmer spectra of the equivariant stable categories studied in \cite{balmer sanders}.  We also show how this problem is equivalent to understanding what chromatic type functions can be topologically realized.  Much of this material is in \cite{balmer sanders, 6 author}, but we hope our exposition will make these interesting problems more accessible to those less conversant with stable equivariant homotopy theory.

In \secref{r numbers section}, we run through basic properties of the blue shift numbers $r_n(G,H)$ and the group theoretic lower and upper bounds $r_-(G,H)$ and $r_+(G,H)$, and how the two main results from \cite{6 author} are used.

In \secref{proof of smith = floyd thm} we prove \thmref{equivalence thm}, after reviewing Jeff Smith's construction which uses certain idempotents in the group rings of the symmetric groups.

In \secref{examples section} we provide details of how we use real and complex representations to construct examples that give lower bounds for $r_n(G,H)$. We illustrate this with the details of \exref{el ab example}, thus completing the proof of \thmref{el ab lower bound theorem}.  We then prove our more delicate result, \thmref{general lower bound theorem}.

Section \ref{ex special 2-groups section} has the details of our results about extraspecial 2-groups, and has a proof of \thmref{good family thm}.

Section \ref{essential pairs section} has some final remarks regarding the general equivariant Balmer spectrum problem.  We discuss `exceptional pairs': pairs $(G,H)$ whose blue shift numbers cannot be deduced from smaller groups.  We also observe that the particular construction we use in our new 2-group examples -- $\RP(\omega)$ -- seems limited to improving the blue shift lower bound by at most 1.  The appendix has a table of exceptional pairs of 2-groups $(G,H)$ for $|G| \leq 32$.

\section{Background} \label{background section}

\subsection{$G$--spaces and classical Smith theory} \label{smith theory background sec}

The classical papers of both P.A.~ Smith and E.E.~Floyd predated notions like `$G$--C.W.~ complex' that we use in our introduction.  We say a bit about why their old theorems apply to $p$--local finite $G$--spaces as defined in Definition \ref{p finite defn}.

The first thing to say is that there are modern presentations in textbooks.

Chapter III of Bredon's 1972 book \cite{bredon trans groups} covers this material.  The theorem of Floyd (\ref{Floyd thm}) is \cite[Theorem III.4.1]{bredon trans groups}, the Euler characteristic result (\ref{Euler char thm}) is \cite[Theorem III.4.3]{bredon trans groups}, and the short deduction from these of Smith's theorem (\ref{Smith thm}) is given in \cite[\S III.5]{bredon trans groups}.  Bredon is working with finite dimensional regular $G$--complexes, but his proofs -- with arguments very similar to the original arguments of Smith and Floyd -- are working with equivariant cellular chains, and generalize without change to all finite dimensional $G$--C.W.~ complexes\footnote{A different work of Bredon \cite{Bredon eq co theories SLNM} from the same period is often given as the reference first defining $G$--C.W.~ complexes, but Bredon chose to not use these in \cite{bredon trans groups}}.

In section 4 of Chapter III of \cite[\S4]{tom Dieck}, tom Dieck gives the standard modern proof of this material using Borel equivariant cohomology and localization theorems: (\ref{Floyd thm}) is \cite[Proposition III.4.16]{tom Dieck}, and this is used to deduce (\ref{Smith thm}) (see \cite[Theorem III.4.22]{tom Dieck})\footnote{Curiously, a proof using localization seems to have first been given in yet another work of Bredon: \cite{Bredon 1967 proceedings}.}.  The Euler characteristic result (\ref{Euler char thm}) is \cite[Proposition III.6.7]{tom Dieck}.  His proofs apply to all finite dimensional $G$--C.W.~ complexes.

Now recall that, when $G$ is a finite $p$--group, we defined a based $G$--space to be a $p$--local finite $G$-space if it is a homotopy retract of the $p$--localization of a finite based $G$--C.W.~ complex.

\begin{lem} \label{nice lemma} If $X$ is a $p$--local finite $G$-space then inequality (\ref{Floyd thm}) holds.
\end{lem}
\begin{proof} Replacing $X$ by $\Sigma^2 X$ if needed, it suffices to prove the lemma when $X$ assumed to be the homotopy retract of the $p$--localization of a $G$--space $Y$ that is the double suspension of a finite based $G$--C.W.~ complex.  As already observed in \cite[paragraph after Lemma 2.2]{barthel greenlees hausmann}, the $p$--localization of such a $Y$ can be obtained as a mapping telescope of self maps of $Y$, and thus will be a finite dimensional $G$--C.W.~ complex (of one dimension more than $Y$).

Similarly the retract $X$ can be obtained as the mapping telescope of
$$ Y_{(p)} \xra{e} Y_{(p)} \xra{e} Y_{(p)} \xra{e} Y_{(p)} \xra{e} \dots$$
where $e$ is the composite $Y_{(p)} \twoheadrightarrow X \hra Y_{(p)}$, and thus will also be a finite dimensional $G$--C.W.~ complex (of one dimension more than $Y_{(p)}$). (Compare with \secref{idempotent section}.)

As $H_*(X;\Z/p)$ is a summand of $H_*(Y;\Z/p)$ and thus is clearly finite dimensional, Floyd's theorem applies.
\end{proof}

\begin{rem} We will use without comment easily verified facts like the following: if $X$ is a $p$--local finite $G$-space and $H$ is a normal subgroup of $G$, then $X^H$ is a $p$--local finite $G/H$-space.
\end{rem}

\subsection{The $p$--local equivariant stable category} \label{equi stable cat sec}

Given a finite $p$--group $G$, let $\Sp(G)$ be the category of $G$--spectra, equipped with an associative and commutative smash product, as in \cite{mandell may equivariant spectra}, and let $\Sp(G)_{(p)}$ the subcategory of $p$--local spectra.  Then let $\mathcal C_G$ denote the full subcategory of compact objects in the homotopy category $ho(\mathcal S(G)_{(p)})$.

We write $\Sp$ for $\Sp(\{e\})$ and $\mathcal C$ for  $\mathcal C_{\{e\}}$, so $\mathcal C$ is the category of compact objects in the homotopy category of (nonequivariant) $p$--local spectra $\Sp_{(p)}$.

As already stated in \lemref{unstable to stable lemma}, $Y \in \mathcal C_G$ if and only if it has the form $Y \simeq S^{-W}\Sinfty_GX$, where $W$ is a real representation of $G$, and $X$ is a $p$--local finite $G$--space.

The stable analogue of taking $H$--fixed points of $G$--spaces is the functor that assigns to a $G$--spectrum $Y$ its geometric $H$--fixed point spectrum $Y^{\Phi H}$.  This functor satisfies two basic properties:
\begin{itemize}
\item The functor $Y \mapsto Y^{\Phi H}$ is symmetric monoidal.
\item There are natural symmetric monoidal equivalences
$$ (\Sinfty_G X)^{\Phi H} \simeq \Sinfty (X^H).$$
\end{itemize}
From these properties, one can deduce that if $Y \simeq S^{-W} \sm \Sinfty_G X$, then $Y^{\Phi H} \simeq S^{-W^H} \sm \Sinfty (X^H)$, and so if $Y \in \mathcal C_G$, then $Y^{\Phi G} \in \mathcal C$.

\subsection{Morava $K$--theories} \label{morava k theory background section}

A general reference for this subsection is \cite{Wurgler 91}.

The coefficient ring of Morava $K$--theory is a graded field and $K(n)$ is a ring spectrum. These two facts imply that the natural Kunneth map
$$ \times: K(n)_*(X) \otimes_{K(n)_*}K(n)_*(Y) \ra K(n)_*(X \sm Y)$$
is an isomorphism for all spectra $X$ and $Y$, and that the natural duality map
$$ K(n)^*(Z) \ra \Hom_{K(n)_*}(K(n)_*(Z),K(n)_*)$$
is an isomorphism for all spectra $Z$.

At all primes, $K(n)$ is an associative ring spectra, and at odd primes it is also commutative.  This ensures that, when $p$ is odd, the functor
$$ K(n)_*(\text{\hspace{.1in}}): (\text{Spectra}, \sm, S) \ra (K(n)_*\text{--modules}, \otimes, K(n)_*)$$
is symmetric monoidal, where the symmetric monoidal stucture on $K(n)_*$--modules is the standard one: $V_* \otimes W_* = V_* \otimes_{K(n)_*} W_*$ with twist isomorphism
$\tau: V_* \otimes W_* \ra W_* \otimes V_*$ given by $\tau(x \otimes y) = (-1)^{|x||y|}y \otimes x$.

There is a wrinkle when $p=2$.  Let $t: X \sm Y \ra Y \sm X$ be the twist equivalence.  In \cite{Wurgler 86}, W\"urgler proves the formula
$$ t_*(x \times y) = y \times x + v_n(q(y) \times q(x)),$$
where $q: K(n)^*(X) \ra K(n)^{*+2^n-1}(X)$ is a natural derivation satisfying $q^2=0$.
It follows that $$ K(n)_*(\text{\hspace{.1in}}): (\text{Spectra}, \sm, S) \ra (K(n)_*\text{--modules}, \otimes, K(n)_*)$$
is {\em not} symmetric monoidal.  However, the formula suggests a fix, as follows.

Let $\Lambda_*(q)$ denote the graded Hopf algebra $K(n)_*[q]/(q^2)$, where $q$ is primitive and $|q|=2^n-1$.  Then we can regard the functor $X \mapsto K(n)_*(X)$ as taking values in $\Lambda_*(q)$--modules.  One easily checks that we can equip the category of $\Lambda_*(q)$--modules with an exotic symmetric monoidal structure:  given two such modules $U_*$ and $V_*$, let $U_* \otimes V_* = U_*\otimes_{K(n)_*}V_*$ as $\Lambda_*(q)$--modules, but with twist isomorphism $\tau: U_* \otimes V_* \ra V_* \otimes U_*$ given by $\tau(x \otimes y) = y \otimes x + v_n(q(y) \otimes q(x))$. W\"urgler's formula then tells us that
$$ K(n)_*(\text{\hspace{.1in}}): (\text{Spectra}, \sm, S) \ra (\Lambda_*(q)\text{--modules}, \otimes, K(n)_*)$$
{\em is} symmetric monoidal\footnote{The exotic symmetric monoidal category here has also appeared in algebraic settings: see the first pages of \cite{benson etingof}.}.

Finally, we remind readers of the fundamental Thick Subcategory Theorem of \cite{hs}:  If $\mathcal  B$ is a proper thick subcategory $\mathcal C$, then $\mathcal B = \mathcal C(n)$ for some $1\leq n\leq \infty$, where
$$\mathcal C(n) = \{ X \ | \ K(n-1)_*(X)=0\} = \{ X \ | \ X \text{ has type }\geq n\},$$
for finite $n$, and $\mathcal C(\infty) = \{*\}$.
With this notation, Mitchell and Ravenel's results combine to show that there are strict inclusions
$$ \mathcal C  \supset \mathcal C(1) \supset \mathcal C(2) \supset \dots, $$
and $\displaystyle \mathcal C(\infty) = \bigcap_{n<\infty} \mathcal C(n)$.
Each $\mathcal C(n)$ for $n \geq 1$ is an ideal in the tensor triangulated category $\mathcal C$, and, indeed, is even a prime ideal, thanks to the Kunneth theorem.

\section{Chromatic Smith theorems, the Balmer spectrum, and type functions} \label{balmer section}

The homotopy category of compact objects in $G$--equivariant spectra, with $G$ a finite group, is tensor triangulated, and in \cite{balmer sanders} the authors began the study of its Balmer spectrum.  They are able to reduce the study for all groups to the case of interest in this paper: understanding the Balmer spectrum of $\mathcal C_G$, the category of compact objects in $\mathcal S(G)_{(p)}$, when $G$ is a finite $p$--group.

By definition the points of the Balmer spectrum are the prime ideals in $\mathcal C_G$, and Balmer and Sanders check that these are precisely the prime ideals
$$ \Pp_{G}(H,n) = \{ X \in \mathcal C_G \ | \ X^{\Phi H} \in \mathcal C(n)\}, $$
with $H$ running through representatives of the conjugacy classes of subgroups of $G$,  and $n \geq 1$.

Understanding the topology of the Balmer spectrum is then shown to be equivalent to the following.

\begin{problem} \label{stable problem} Given a finite $p$--group $G$, and subgroups $K$ and $H$, for what pairs $(n,m)$ is it true that $\Pp_{G}(K,m) \subseteq \Pp_{G}(H,n)$?
\end{problem}
It isn't hard to show that a necessary condition for this to happen is that $K$ be subconjugate to $H$ in $G$.

The case when $G=H$ is related to Problem \ref{chro smith problems} as follows.

\begin{lem} \label{unstable = stable Smith lemma} Given $K\leq H$, the following are equivalent.

\noindent{(a)} $\Pp_{H}(K,m+1) \subseteq \Pp_{H}(H,n+1)$.

\noindent{(b)} For all $Y \in \mathcal C_H$, $K(m)_*(Y^{\Phi(K)}) = 0 \Rightarrow K(n)_*(Y^{\Phi(H)}) = 0$.

\noindent{(c)} The $(H,K,n,m)$ Chromatic Smith Theorem holds.

\noindent{(d)} $m \geq n + r_n(H,K)$.
\end{lem}
\begin{proof} It is clear from the definitions that (a) $\Leftrightarrow$ (b), and $r_n(H,K)$ was defined so that (c) $\Leftrightarrow$ (d).  Now recall that $Y \in \mathcal C_H$ if and only if $Y = S^{-W} \sm \Sinfty_H X$ with $X$ a $p$--local finite $H$--space, and, in this case, $Y^{\Phi(K)} = S^{-W^K} \sm \Sinfty X^K$.  Thus $K(m)_*(Y^{\Phi(K)}) = 0 \Leftrightarrow \widetilde K(m)_*(X^{K}) = 0$.  The equivalence (b) $\Leftrightarrow$ (c) follows, as (c) is the statement that for all $p$--local finite $H$--spaces $X$,  $\widetilde K(m)_*(Y^{K}) = 0 \Rightarrow \widetilde K(n)_*(Y^{H}) = 0$.
\end{proof}

Similarly, there is a Chromatic Floyd Theorem version of the equivalence (b) $\Leftrightarrow$ (c). It is useful to extend a bit of our notation:   if $Y$ is a spectrum such that $\dim_{K(n)_*} K(n)_*(Y)$ is finite, we let $k_n(Y) = \dim_{K(n)_*} K(n)_*(Y)$.  With this notation, $k_n(\Sinfty X) = k_n(X)$, for any based space such that $\dim_{K(n)_*} K(n)_*(X)$ is finite.

\begin{lem} \label{unstable = stable Floyd lemma} Given $K\leq H$, the following are equivalent.

\noindent{(a)} For all $Y \in \mathcal C_H$, $k_m(Y^{\Phi(K)}) \geq k_n(Y^{\Phi(H)})$.

\noindent{(b)} The $(H,K,n,m)$ Chromatic Floyd Theorem holds.
\end{lem}

Balmer and Sanders then show that Problem \ref{stable problem} for general $K,H \leq G$ reduces to the  $G=H$ case.  We describe how this goes.

It is useful to generalize our blue shift numbers\footnote{We note that $r^G_n(H,K)$ here is denoted $\beth_{n+1}(G;H,K)$ in \cite{6 author}.}.

\begin{defn}  Given $K<H<G$ and $n \geq 0$, let $r^G_n(H,K)$ be the minimal $r$ such that $\Pp_{G}(K,n+r+1) \subseteq \Pp_{G}(H,n+1)$.
\end{defn}

Thus $r^H_n(H,K) = r_n(H,K)$. \\

With this definition,  \cite[Prop.6.11]{balmer sanders} (when $G$ is a $p$--group) says the following.

\begin{prop} \label{G H K prop}  Given $K\leq H \leq G$,
$$r^G_n(H,K) = \min\{ r_n(H,L) \ | \ L \leq  H \text{ is conjugate to $K$ in $G$}\}.$$
\end{prop}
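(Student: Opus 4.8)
The plan is to unwind the definition of $r^G_n(H,K)$ in terms of prime-ideal containments and then reduce the $G$-equivariant containment to a family of $H$-equivariant containments, using exactly the ingredients quoted earlier in the excerpt. First I would observe that, by definition, $r^G_n(H,K) = \min\{r : \Pp_G(K,n+r+1) \subseteq \Pp_G(H,n+1)\}$, so it suffices to show that for a fixed $r$,
$$ \Pp_G(K,n+r+1) \subseteq \Pp_G(H,n+1) \iff \exists\, L < H \text{ conjugate to } K \text{ in } G \text{ with } \Pp_H(L,n+r+1) \subseteq \Pp_H(H,n+1), $$
since the right-hand condition, by the Lemma preceding the Proposition, says exactly that the $(H,L,n,n+r)$ Chromatic Smith Theorem holds, i.e. that $r_n(H,L) \le r$. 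Taking the minimum over such $r$ on both sides then yields $r^G_n(H,K) = \min\{ r_n(H,L) : L < H \text{ conjugate to } K \text{ in } G\}$, which is the claim.

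For the displayed equivalence I would invoke [Balmer–Sanders, Prop. 6.11] as cited in the excerpt: $\Pp_G(K,m) \subseteq \Pp_G(H,n)$ if and only if $\Pp_H(gKg^{-1},m) \subseteq \Pp_H(H,n)$ for some $g \in G$ with $gKg^{-1} < H$. Strictly, the excerpt is \emph{restating} that result, so in the write-up I would either cite it directly or sketch the two implications. The ($\Leftarrow$) direction is the formal one: if $gKg^{-1} = L < H$ and $\Pp_H(L,m) \subseteq \Pp_H(H,n)$, then restricting a compact $G$-spectrum $X$ to an $H$-spectrum and using that geometric $L$-fixed points of $\Res^G_H X$ agree with geometric $L$-fixed points of $X$ (both equal to $(X)^{\Phi L}$ up to the standard identifications), together with $X^{\Phi K} \simeq X^{\Phi gKg^{-1}}$ coming from the inner automorphism of $G$, gives $X \in \Pp_G(K,m) \Rightarrow \Res^G_H X \in \Pp_H(L,m) \Rightarrow \Res^G_H X \in \Pp_H(H,n) \Rightarrow X \in \Pp_G(H,n)$. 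The ($\Rightarrow$) direction is the substantive half, where one produces, from the $G$-equivariant containment, some conjugate $L = gKg^{-1} < H$ making the $H$-equivariant containment hold; this is precisely the content of [Balmer–Sanders, Prop. 6.11], and I would simply cite it rather than reprove it.

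The main obstacle — or rather the only place requiring care — is the bookkeeping about conjugacy: the set $\{L < H : L \text{ conjugate to } K \text{ in } G\}$ may contain several $H$-conjugacy classes, and $r_n(H,L)$ depends only on the $H$-conjugacy class of $L$ (indeed only on the equivalence class of the pair $(H,L)$ in the sense of Remark~\ref{auto remark}), so the minimum on the right is a genuine minimum over finitely many values. I would also note the degenerate possibility that no conjugate of $K$ lies inside $H$, in which case $\Pp_G(K,m) \not\subseteq \Pp_G(H,n)$ for all $m$ (the necessary subconjugacy condition quoted in the excerpt fails), so $r^G_n(H,K) = \infty$ and the right-hand side is a minimum over the empty set, also $\infty$; consistency of the two conventions should be stated. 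Everything else is a direct translation through the Lemma, with no hard analysis involved.
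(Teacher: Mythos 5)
Your derivation is correct as far as it goes: unwinding the definition of $r^G_n(H,K)$, substituting $m=n+r+1$ into the quoted containment criterion, and translating through the lemma preceding the proposition does give exactly the stated formula, and your conjugacy bookkeeping is fine ($r_n(H,L)$ depends only on the equivalence class of $(H,L)$, and the indexing set is nonempty and finite). One small remark: the degenerate case you worry about cannot occur, since $K<H$ is part of the hypothesis, so $K$ itself lies in the set of admissible $L$. The real difference is in route. The paper does not invoke \cite[Prop.~6.11]{balmer sanders} in its proof; the proposition is advertised as a restatement of that result precisely so that a quick independent proof can be given via type functions. The easy inequality $r^G_n(H,K)\leq \min_L r_n(H,L)$ is obtained by restricting to $H$ a finite $G$--space realizing the extreme jump for $(H,K)$ in $G$ (this is the space-level form of your formal direction), while the substantive inequality $r^G_n(H,K)\geq \min_L r_n(H,L)$ --- exactly the direction you delegate to the citation --- is proved constructively: for each $J<H$ with $J\overset{G}{\sim}K$ one chooses an $H$--space $X(J)$ with $X(J)^H$ of type $n$ and $X(J)^J$ of type $n+r_n(H,J)$, smashes them together, and induces up to $Y=G_+\sm_H X$; \lemref{induction type lemma} identifies $\text{type}\,Y^K$ as a minimum over the $G$--conjugates of $K$ inside $H$, and \lemref{type lemma} converts this back into the desired bound on $r^G_n(H,K)$. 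So your write-up is legitimate but proves the proposition only modulo the literature, with the hard content remaining exactly the cited external statement; the paper's argument buys a self-contained proof at the level of finite $G$--spaces and exhibits the smash-and-induce construction that it reuses immediately afterwards for the realizability of type functions. If you want your version to stand on its own, you would need to supply a proof of the forward implication of the containment criterion, at which point you would essentially be reproducing the paper's construction.
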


We end this section with a proof of this which is differently arranged than that in \cite{balmer sanders}. Enroute, we also give an elementary proof of a result in \cite{balmer sanders} about `type functions'.

Let $\text{Conj}(G)$ denote the set of conjugacy classes of subgroups of $G$.

\begin{defn}  Given a $p$--local finite $G$--space (or $p$--local compact $G$--spectrum) $X$, its {\em type function} is the function
$\text{type}_X: \text{Conj}(G) \ra \N \cup \{\infty\}$ defined by $\text{type}_X(H) = \text{ type }X^H$ (or $\text{ type }X^{\Phi(H)}$).
\end{defn}

The last statement in \cite[Corollary 10.6]{balmer sanders} tells us what type functions can occur.

\begin{prop} \label{type prop} Given a function $f: \text{Conj}(G) \ra \N \cup \{\infty\}$, there exists a $p$--local finite $G$--space $X$ such that $f=\text{type}_X$ if and only if $f(K) \leq f(H) + r_{f(H)}^G(H,K)$ for all $K<H<G$.
\end{prop}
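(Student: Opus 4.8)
The plan is to prove the two implications separately. The forward direction is essentially a restatement of \lemref{type lemma}: if a finite $G$--space $X$ realizes $f=\text{type}_X$, then for every pair $K<H$ with $K\neq H$ the space $X$ itself witnesses that some finite $G$--space has $K$--fixed points of type $f(K)$ and $H$--fixed points of type $f(H)$, so \lemref{type lemma}, applied with $m=f(K)$ and $n=f(H)$, forces $f(K)\le f(H)+r^G_{f(H)}(H,K)$. (The case $K=H$ is vacuous, since $r^G_n(H,H)=r_n(H,H)=0$.)

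For the converse, assume $f$ satisfies the inequalities. I would build $X=\bigvee_{[H]}Z_{[H]}$, a finite wedge indexed by the conjugacy classes $[H]\in\text{Conj}(G)$ with $f(H)<\infty$, where each $Z_{[H]}$ is a finite $G$--space that \emph{dominates} $f$, meaning $\text{type}(Z_{[H]}^L)\ge f(L)$ for every subgroup $L<G$, and which in addition satisfies $\text{type}(Z_{[H]}^H)=f(H)$. Granting this, the identity $\text{type}_X=f$ follows formally: fixed points commute with wedges, and by the K\"unneth theorem for $K(n)_*$ the type of a wedge is the minimum of the types of its summands, so $\text{type}_X(L)=\min_{[H]}\text{type}(Z_{[H]}^L)$; domination gives $\ge f(L)$, while $Z_{[L]}$ (which is in the wedge when $f(L)<\infty$) attains equality. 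The subgroups $L$ with $f(L)=\infty$ are handled separately: the hypothesis then forces $f=\infty$ on every subgroup containing a conjugate of $L$, so $L$ is subconjugate to no $H$ indexing the wedge, each $Z_{[H]}^L$ is a point, and $\text{type}_X(L)=\infty=f(L)$.

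The construction of $Z_{[H]}$ is the heart of the matter. I would take $Z_{[H]}=(G/H_+\sm V_{f(H)})\sm\bigwedge_{[M']}Y_{[H],[M']}$, where $V_k$ is a finite complex of type $k$ with trivial $G$--action (Mitchell's theorem), the smash runs over the finitely many conjugacy classes $[M']$ with $M'$ subconjugate to $H$ and $f(M')>f(H)$, and $Y_{[H],[M']}$ is a finite $G$--space with $\text{type}(Y_{[H],[M']}^{M'})\ge f(M')$ and $\text{type}(Y_{[H],[M']}^{H})\le f(H)$. Such a $Y_{[H],[M']}$ exists precisely because the hypothesis $f(M')\le f(H)+r^G_{f(H)}(H,M')$ is, by the definition of $r^G_{\bullet}$ recalled in \secref{balmer section}, equivalent to the non-containment $\Pp_G(M',f(M'))\nsubseteq\Pp_G(H,f(H)+1)$, which produces a compact $G$--spectrum, hence a finite $G$--space, with the stated fixed-point types. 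To check the two required properties of $Z_{[H]}$, I would use that taking fixed points is monoidal and that, again by K\"unneth, the type of a smash product of finite complexes is the \emph{maximum} of the types of the factors: for $L$ not subconjugate to $H$ the factor $G/H_+\sm V_{f(H)}$ is a point, so $Z_{[H]}^L$ is too; for $L$ subconjugate to $H$ with $f(L)\le f(H)$ that factor already contributes type $f(H)\ge f(L)$; for $L$ subconjugate to $H$ with $f(L)>f(H)$ the factor $Y_{[H],[L]}$ contributes type $\ge f(L)$; and at $L=H$ every factor has type $\le f(H)$ while $G/H_+\sm V_{f(H)}$ has type exactly $f(H)$, giving $\text{type}(Z_{[H]}^H)=f(H)$.

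The step I expect to be the main obstacle is precisely this construction, and within it the tension that at a properly subconjugate $M'$ with $f(M')>f(H)$ one must \emph{raise} the type of the fixed points to $f(M')$ without raising the type at $H$ past $f(H)$ — a plain wedge fails here because types of wedges behave like minima, whereas a smash product makes them behave like maxima, and the hypothesis, in its Balmer-spectrum form $\Pp_G(M',f(M'))\nsubseteq\Pp_G(H,f(H)+1)$, is exactly what supplies a smash factor doing the raising at $M'$ at no cost at $H$. A secondary, purely bookkeeping, point is the value $\infty$: here one uses that the hypothesis makes $\{[L]:f(L)=\infty\}$ upward closed under subconjugacy, so these subgroups never interact with the rest of the construction.
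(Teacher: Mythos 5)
Your proposal is correct and is essentially the paper's own argument: the forward implication is read off from \lemref{type lemma}, and the converse is realized by the same construction, a wedge over conjugacy classes $[H]$ of smash products of the form $G/H_+ \sm (\text{witnesses for pairs below } H)$, using that fixed points are monoidal and that the K\"unneth theorem makes types behave as maxima under smash and minima under wedge. Your two deviations --- smashing in a trivial-action Mitchell complex $V_{f(H)}$ and taking witnesses with inequality (rather than exact) fixed-point types straight from the non-containment $\Pp_G(M',f(M'))\nsubseteq\Pp_G(H,f(H)+1)$, plus the explicit bookkeeping of the $\infty$ locus --- are only cosmetic refinements (indeed the $V_{f(H)}$ factor also covers the degenerate case $H=\{e\}$, where the paper's empty smash would otherwise need the same patch).
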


To prove this, we start with a lemma.

\begin{lem} \label{type lemma} Given $K\lneq H<G$ and $n \geq 0$, there exists a $p$--local finite $G$--space $X$ such that $X^K$ has type $m$ and $X^H$ has type $n$ if and only if $m\leq n+ r^G_n(H,K)$.
\end{lem}
\begin{proof}  $m\leq n+ r^G_n(H,K)$ if and only if $\Pp_{G}(K,m) \nsubseteq \Pp_{G}(H,n+1)$, and this happens exactly when there exists a $p$--local finite $G$--space $Y$ with $\text{type }Y^K \geq m$ but $\text{type }Y^H \leq n$.  Given such a $Y$, let $X = (G/K_+ \sm U) \vee (Y \sm V)$ where $U$ has type $m$ and $V$ has type $n$, and both are given a trivial $G$-action.  Note that $(G/K)^K = W_G(K) (= N_G(H)/H)$, which is a nonempty finite set of points, while $(G/K)^H = \emptyset$.  Thus
$$ X^K = (W_G(K)_+ \sm U) \vee (Y^K \sm V)$$
which has type precisely $m$, while
$$ X^H = (\emptyset_+ \sm U) \vee (Y^H \sm V) = Y^H \sm V$$
which has type precisely $n$.
\end{proof}

\begin{proof}[Proof of \propref{type prop}]  The `only if' statement follows from the lemma.

For the `if' direction, suppose $f: \text{Conj}(G) \ra \N \cup \{\infty\}$ satisfies $f(K) \leq f(H) + r_{f(K)}^G(H,K)$ for all $K<H<G$. By the lemma, for each $K < H$, there exists a $p$--local finite $G$--space $Y(H,K)$ such that $\text{type }Y(H,K)^K = f(K)$ and $\text{type }Y(H,K)^H = f(H)$.

For each $H \in  \text{Conj}(G)$, we now let $X(H)$ be the $G$--space defined by
$$ X(H) = G/H_+ \sm \bigwedge_{L < H} Y(H,L).$$
We claim that $\text{type}_{X(H)}(K) \geq f(K)$ for all $K$, with equality when $K=H$. To see this, we first note if $K$ is not subconjugate to $H$, then $(G/H)^K = \emptyset$, so that $X(H)^K$ is contractible and $\text{type}_{X(H)}(K) = \infty$.  If $gKg^{-1} \lneq H$, then
\begin{equation*}
\begin{split}
\text{type }X(H)^K & = \text{type } \bigwedge_{L < H} Y(H,L)^K \\
  & = \max\{\text{type }Y(H,L)^K \ | \ L < H\} \\
  & \geq \text{type }Y(H, gKg^{-1})^K = f(K).
\end{split}
\end{equation*}
Finally, if $K=H$, then $\displaystyle \text{type }X(H)^H = \text{type } \bigwedge_{L < H} Y(H,L)^H = f(H)$.

Now let $\displaystyle X = \bigvee_H X(H)$.  Then $\displaystyle \text{type}_X(K) = \min_H\{ \text{type}_{X(H)}(K)\} = f(K)$.
\end{proof}

We now turn to the proof of \propref{G H K prop}.  It is useful to write $L \overset{G}{\sim}K$ if $L$ and $K$ are conjugate subgroups of $G$.

\begin{lem} \label{induction type lemma}  Given $K\leq H<G$, and a $p$--local finite $H$--space $X$, the $p$--local finite $G$--space $G_+ \sm_H X$ satisfies:
$$ \text{type } (G_+ \sm_H X)^H = \text{ type } X^H,$$
and
$$ \text{type } (G_+ \sm_H X)^K = \min\{\text{type } X^L \ | \ L \leq H \text{ and } L \overset{G}{\sim}K\}.$$
\end{lem}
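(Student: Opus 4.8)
The plan is to reduce the statement to two standard facts: the point-set decomposition of the fixed points of an induced space, and the additivity of chromatic type under wedge sums. So first I would record the decomposition. Fix coset representatives $g_1,\dots,g_m$ for $G/H$, with $g_1 = e$; non-equivariantly $G_+ \sm_H X \cong \bigvee_{i=1}^m X$, where the $i$-th summand consists of the points $[g_i,x]$. A point $[g_i,x]$ with $x$ not the basepoint is fixed by $k\in K$ precisely when there is (a necessarily unique) $h\in H$ with $kg_i = g_ih$ and $hx=x$; since $h = g_i^{-1}kg_i$, running over $k\in K$ this says exactly that $g_i^{-1}Kg_i \subseteq H$ and $x\in X^{g_i^{-1}Kg_i}$. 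Hence
$$ (G_+ \sm_H X)^K \;\cong\; \bigvee_{\, g_i^{-1}Kg_i \subseteq H\,} X^{g_i^{-1}Kg_i} $$
as based spaces (one can also quote this as the space-level double coset formula); note that $g_1 = e$ contributes $X^K$, so the wedge is nonempty.

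Next I would invoke the fact that for finite based complexes $\widetilde K(j)_*$ of a wedge is the direct sum of the $\widetilde K(j)_*$ of the wedge summands, so the type of a finite wedge is the minimum of the types of its summands (a contractible summand contributing $\infty$). Applying this to the displayed decomposition gives
$$ \text{type}\,(G_+ \sm_H X)^K \;=\; \min\{\, \text{type}\,X^{g_i^{-1}Kg_i} \;:\; 1\le i\le m,\ g_i^{-1}Kg_i \subseteq H\,\}. $$

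The only genuinely fiddly step — and the one I would be most careful with — is identifying this minimum with $\min\{\text{type}\,X^L : L\le H,\ L\overset{G}{\sim}K\}$. Each group $g_i^{-1}Kg_i$ occurring above is a subgroup of $H$ that is $G$-conjugate to $K$, so the left-hand set of types is contained in the right-hand one, giving ``$\ge$''. Conversely, given any $L\le H$ with $L = g^{-1}Kg$ for some $g\in G$, write $g = g_ih$ with $h\in H$; then $g_i^{-1}Kg_i = hLh^{-1}\le H$ (since $L,h\in H$), so $g_i$ does contribute a summand, and $x\mapsto h^{-1}x$ is a based homeomorphism $X^{g_i^{-1}Kg_i}\xrightarrow{\ \sim\ }X^{L}$, so $\text{type}\,X^L$ equals one of the values on the left. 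Hence the two minima agree, which is the second displayed formula of the lemma. The first formula is the special case $K=H$: the only subgroup of $H$ that is $G$-conjugate to $H$ is $H$ itself, so the right-hand side collapses to $\text{type}\,X^H$ (equivalently, $(G/H)^H = N_G(H)/H$ is nonempty and every summand in the decomposition is a copy of $X^H$).
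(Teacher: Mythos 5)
Your proof is correct and follows essentially the same route as the paper: decompose $(G_+ \sm_H X)^K$ as a wedge of spaces $X^{g^{-1}Kg}$ indexed by the cosets $gH \in (G/H)^K$ (equivalently those $g$ with $g^{-1}Kg \leq H$), then use that the type of a finite wedge is the minimum of the types of the summands. You merely make explicit the bookkeeping the paper leaves to the reader, namely the conjugation homeomorphisms identifying the summands with $X^L$ for $L\leq H$, $L \overset{G}{\sim} K$, and the observation that the case $K=H$ collapses to the first formula.
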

\begin{proof}  The first statement is a special case of the second. For the second, one checks that
$$ (G_+ \sm_H X)^K = \bigvee_{gH \in (G/H)^K} X^{g^{-1}Kg}$$
and that $gH \in (G/H)^K$ if and only if $g^{-1}Kg \leq H$.
\end{proof}

\begin{proof}[Proof of \propref{G H K prop}]   Recall that our goal is to show that if $K<H<G$, then
$r^G_n(H,K) = \min\{ r_n(H,L) \ | \ L < H \text{ and } L \overset{G}{\sim}K\}$.
\lemref{type lemma} allows us to regard this as a statement about type functions.

We first check that $r^G_n(H,K) \leq \min\{ r_n(H,L) \ | \ L < H \text{ and } L \overset{G}{\sim}K\}$.  To see this, let $Y$ be a $p$--local finite $G$--space such that $Y^H$ has type $n$ and $Y^K$ has type $n+r^G_n(H,K)$.  Suppose $L = g^{-1}Kg < H$. If we consider $Y$ as an $H$--space by restriction, then $Y^H$ has type $n$ and $Y^L$ still has type $n+r^G_n(H,K)$ since $Y^L = g^{-1}Y^K$.  Thus $r_n(H,L) \geq r_n^G(H,K)$.

We show the other inequality holds.  Given $J<H$ with $J \overset{G}{\sim}K$, let $X(J)$ be a $p$--local finite $H$--space such that $X(J)^H$ has type $n$ and $X(J)^J$ has type $n+r_n(H,J)$, and  let $\displaystyle X = \bigwedge_{J} X(J)$.  Then $\displaystyle X^H = \bigwedge_{J} X(J)^H$, which still has type $n$, while, if $L<H$ then $\displaystyle X^L = \bigwedge_{J} X(J)^L$, so that if also $L \overset{G}{\sim}K$ then
$$ \text{type }X^L = \max\{\text{type }X(J)^L \ | \ J \overset{G}{\sim}K\}\geq \text{ type }X(L)^L = n+r_n(H,L).$$
Applying \lemref{induction type lemma} to the $G$--space $Y = G_+ \sm_H X$, we see that $Y^H$ has type $n$, while
\begin{equation*}
\begin{split}
\text{type }Y^K &
= \min\{\text{type } X^L \ | \ L < H \text{ and } L \overset{G}{\sim}K\} \\
  & \geq \min\{n+ r_n(H,L) \ | \ L < H \text{ and } L \overset{G}{\sim}K\}.
\end{split}
\end{equation*}
This means that $r^G_n(H,K) \geq \min\{ r_n(H,L) \ | \ L < H \text{ and } L \overset{G}{\sim}K\}$.
\end{proof}

The following is likely the simplest example illustrating the difference between $r_n(H,K)= r_n^H(H,K)$ and $r^G_n(H,K)$.  (We thank Richard Lyons for pointing us towards this.)

\begin{ex} \label{Lyons' example}  Let $G = (C_4 \times C_4)\rtimes C_2$, $H = C_4 \times C_2 < C_4 \times C_4 < G$, $K = C_2 \times \{e\} <H$, and $L = \{e\} \times C_2 <H$.  Then $L \overset{G}{\sim}K$, and, by the abelian group results of \cite{6 author}, $r_n(H,K)= \text{rank }H/K = 2$, while $r_n(H,L)= \text{rank }H/L = 1$.  It follows that $r_n^G(H,K) = 1$.
\end{ex}

\section{Basic properties of $r_{-}(G,H)$, $r_n(G,H)$, and $r_{+}(G,H)$.} \label{r numbers section}

Here we discuss some basic properties of the blue shift numbers $r_n(G,H)$ and their group theoretic upper and lower bounds, $r_{+}(G,H)$ and $r_{-}(G,H)$, and how these bounds are deduced from the following two results from \cite{6 author}.

\begin{thm} \label{cyclic group upper bound} $r_n(C_{p^k},\{e\}) \leq 1$ for all $n$ and $k$.
\end{thm}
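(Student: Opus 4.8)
The plan is to pass to genuine equivariant spectra, restate the claim as a property of geometric fixed points, and then isolate a single Tate construction as the only place where chromatic height can drop.

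By the dictionary of \secref{equi stable cat sec}, the $(C_{p^k},\{e\},n,n+1)$ Chromatic Smith Theorem is equivalent to the statement that for every compact $C_{p^k}$--spectrum $Y$ whose underlying spectrum $Y^{\Phi\{e\}}$ is $K(n+1)_*$--acyclic, the geometric fixed point spectrum $Y^{\Phi C_{p^k}}$ is $K(n)_*$--acyclic; equivalently, $\Pp_{C_{p^k}}(\{e\},n+2)\subseteq\Pp_{C_{p^k}}(C_{p^k},n+1)$. So I would work with compact $C_{p^k}$--spectra from here on.

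To analyze $Y^{\Phi C_{p^k}}$, I would run isotropy separation for $G=C_{p^k}$: smashing $Y$ with the cofibre sequences $E\mathcal{F}_+\to S^0\to\widetilde{E\mathcal{F}}$ attached to the relevant families $\mathcal{F}$ of subgroups, and using that $EG_+\wedge Y$ has contractible geometric $G$--fixed points, one resolves $Y^{\Phi C_{p^k}}$ into layers; comparing $Y$ with its Borel completion $F(EG_+,Y)$, which has the same underlying spectrum, the resolution should leave, as the only layer capable of lowering chromatic height, a single genuine Tate construction built from the underlying Borel data of $Y$. The theorem then reduces to the \emph{Tate blueshift} assertion: if $Z$ is a $K(n+1)_*$--acyclic spectrum carrying any action of $C_{p^k}$ (or of a subquotient), then the Tate spectrum $Z^{tC_{p^k}}$ is $K(n)_*$--acyclic.

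I expect this Tate blueshift to be the heart of the argument, and the essential point is that the shift is by exactly \emph{one}, not by $k$. In particular I would \emph{not} try to deduce it by iterating the base case $\{e\}<C_p$ up the chain $\{e\}<C_p<\cdots<C_{p^k}$: forming geometric $C_{p^k}$--fixed points is a $k$--fold iterate of forming geometric $C_p$--fixed points of the successive cyclic subquotients, and each such step can cost a full unit of blueshift, so that route only yields the much weaker bound $r_n(C_{p^k},\{e\})\le k$. The saving must instead be extracted globally on $C_{p^k}$, where it comes from the explicit structure of $K(n)^*(BC_{p^k})$ — governed by the $p^k$--series $[p^k](x)$ of the formal group law of $K(n)$, in the style of Ravenel--Wilson — together with a transfer/duality argument bounding the amount of $K(n)$--homology a $C_{p^k}$--Tate construction can create; alternatively one can appeal to Greenlees--Sadofsky--type vanishing of Tate constructions in the $K(n)$--local category. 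Granting the Tate blueshift, the Tate layer is $K(n)_*$--acyclic, hence so is $Y^{\Phi C_{p^k}}$, as required.
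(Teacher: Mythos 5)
First, a remark on the comparison you were asked to make: the paper does not prove this theorem at all --- it is imported verbatim from \cite{6 author} (Theorem 2.1 there, specialized to a cyclic group), so there is no internal argument for your proposal to parallel. What you have sketched is, in outline, the strategy of that reference and of \cite{balmer sanders} for $k=1$: reduce to geometric fixed points of compact $G$--spectra, and locate the blueshift in a Tate-type construction. Your bookkeeping of the reduction is correct (the $(C_{p^k},\{e\},n,n+1)$ statement is indeed $\Pp_{C_{p^k}}(\{e\},n+2)\subseteq \Pp_{C_{p^k}}(C_{p^k},n+1)$), and your observation that iterating the $\{e\}<C_p$ case up the subgroup chain only yields $r_n(C_{p^k},\{e\})\le k$ is also correct.

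However, as a standalone proof the proposal has a genuine gap, located exactly where you wave your hands. (1) The claim that isotropy separation leaves ``a single genuine Tate construction built from the underlying Borel data of $Y$'' as the only layer that can lower height is unjustified, and as stated it cannot be right: the reduction you describe never uses compactness of $Y$, yet without compactness the theorem is false --- for example $Y=\widetilde{E\mathcal{P}}$ (with $\mathcal{P}$ the family of proper subgroups) has contractible underlying spectrum, hence $K(n+1)_*$--acyclic underlying spectrum, while $Y^{\Phi C_{p^k}}=S^0$ is not $K(n)_*$--acyclic. Moreover, for $k\ge 2$ the layers of the isotropy filtration involve the geometric fixed points at the intermediate subgroups $C_{p^j}$ with their residual $C_{p^{k-j}}$--actions, and the Borel-type object that actually arises is $\Phi^{C_{p^k}}F(EG_+,Y)$, an iterated (``generalized'') Tate construction governed by the family $\mathcal{P}$, which is \emph{not} the ordinary Tate spectrum $(Y^u)^{tC_{p^k}}$; even the comparison of $Y^{\Phi C_{p^k}}$ with this Borel object for compact $Y$ requires a real argument (already for $k=1$ this is where \cite{balmer sanders} have to work). (2) The ``Tate blueshift assertion'' you reduce to --- that $Z^{tC_{p^k}}$ is $K(n)_*$--acyclic whenever $Z$ is any $K(n+1)_*$--acyclic spectrum with action --- is not a quotable theorem: the Greenlees--Sadofsky vanishing has $K(n)$--\emph{locality} as its hypothesis, Ando--Morava--Sadofsky treat $k=1$, and the statement actually needed is that the $k$--fold iterated Tate construction applied to height-$(n+1)$ Lubin--Tate theory (equivalently, to the relevant Borel-complete objects) drops height by exactly one, \emph{uniformly in $k$}. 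That uniform statement is precisely the main technical theorem of \cite{6 author}, proved there by a nontrivial analysis (your instinct that the $p^k$--series of the formal group must enter is right, but the computation is the content, not a citation). So the proposal correctly identifies where the difficulty lives but then asserts, rather than proves, the result; filling the gap amounts to reproving the theorem of \cite{6 author}.
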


This upper bound is \cite[Thm.2.1]{6 author}, specialized to the case when $A$ is cyclic.  (Though not noted in \cite{6 author}, their result for a general finite abelian $p$--group $A$ follows from the cyclic group case.)  The proof is via localization theory, and thus, in some sense, resembles the proof of P.A.Smith's theorem as in \cite{tom Dieck}.  For an alternate proof of \thmref{cyclic group upper bound}, see \cite{kuhn 21}.

We restate \thmref{el ab lower bound theorem}.

\begin{thm} \label{el ab lower bound theorem again} $r \leq r_n(C_{p}^r,\{e\})$ for all $n$ and $r$.
\end{thm}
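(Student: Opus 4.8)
The plan is to deduce the theorem from \corref{lower bound method cor}: for the pair $(G,H)=(E_r,\{e\})$ that corollary reduces us to producing a single finite $E_r$--space $X$ with $k_{n+r-1}(X)<k_n(X^{E_r})$, and I will take $X=L_p(\omega)$ with $\omega=p^n\rho_r^{\C}$. Since $L_p(\omega)$ is a closed smooth manifold carrying a smooth (linear) $E_r$--action it is a finite $E_r$--space, so the theorem comes down to establishing the two displayed equalities of \exref{el ab example}, which I now sketch.

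For the fixed locus, I would decompose the regular representation as $\rho_r^{\C}=\bigoplus_{\chi}\C_{\chi}$ over the $p^r$ characters $\chi\colon E_r\to S^1$, all of which factor through $C_p\subset S^1$ since $E_r$ has exponent $p$. A unit vector $v\in\omega$ represents an $E_r$--fixed point $[v]\in L_p(\omega)=S(\omega)/C_p$ precisely when $gv\in C_p\cdot v$ for every $g\in E_r$, i.e.\ when $E_r$ scales $v$ through some homomorphism $\psi\colon E_r\to C_p$, and hence $v$ lies in the $\psi$--isotypic summand $\omega_\psi$. For $\omega=p^n\rho_r^{\C}$ each $\omega_\psi$ is a copy of $\C^{p^n}$ with $C_p$ acting by scalars, and distinct isotypic summands meet $S(\omega)$ in disjoint subsets, each preserved by the scalar $C_p$. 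Therefore
$$L_p(\omega)^{E_r}\;\cong\;\coprod_{\psi}\bigl(S(\omega_\psi)/C_p\bigr)\;\cong\;\coprod_{p^r}L_p(\C^{p^n}),$$
a disjoint union of $p^r$ copies of the standard lens space of complex dimension $p^n$.

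It then remains to compute the Morava $K$--theory of a standard lens space $L_p(\C^d)=S(d\eta)$, where $\eta$ is the tautological complex line bundle on $BC_p$. The Gysin sequence of the $K(m)$--oriented sphere bundle $S(d\eta)\to BC_p$ expresses $k_m(L_p(\C^d))$ as the sum of the ranks of the kernel and cokernel of multiplication by the Euler class $e(d\eta)=x^d$ on $K(m)^*(BC_p)\cong K(m)^*[x]/(x^{p^m})$, where $x=e(\eta)$; in particular, when $d\ge p^m$ this multiplication is zero and $k_m(L_p(\C^d))=2p^m$. Applying this with $(d,m)=(p^n,n)$ gives $k_n(L_p(\C^{p^n}))=2p^n$, hence $k_n(L_p(\omega)^{E_r})=p^r\cdot 2p^n=2p^{n+r}$; applying it with $(d,m)=(p^{n+r},\,n+r-1)$ to $L_p(\omega)=L_p(\C^{p^{n+r}})$ gives $k_{n+r-1}(L_p(\omega))=2p^{n+r-1}$. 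Since $2p^{n+r-1}<2p^{n+r}$, \corref{lower bound method cor} yields $r\le r_n(E_r,\{e\})$. The step I expect to be the genuine point is the fixed-locus identification: $L_p(\omega)^{E_r}$ is strictly larger than $S(\omega)^{E_r}/C_p$, because an $E_r$--orbit can re-enter the $C_p$--orbit of $v$ through a nontrivial scalar, so one must stratify the fixed set by the scalar-valued homomorphisms $E_r\to C_p$; the lens-space Morava $K$--theory input, by contrast, is classical.
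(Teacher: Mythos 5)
Your proposal is correct and follows essentially the same route as the paper: the same space $L_p(p^n\rho_r^{\C})$, the same identification of the fixed locus as $\coprod_{p^r} L_p(\C^{p^n})$ via characters $E_r \to C_p$, and the same appeal to \corref{lower bound method cor}. The only difference is cosmetic: you justify the standard computation $k_m(L_p(\C^d)) = 2\min(d,p^m)$ by the Gysin sequence for $S(d\eta) \to BC_p$ using $K(m)^*(BC_p)\cong K(m)^*[x]/(x^{p^m})$, whereas the paper runs the Atiyah--Hirzebruch spectral sequence with the $Q_n$ differential on $H^*(L_p(\C^d);\Z/p)$; both are valid proofs of \propref{Lens space prop}.
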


This is \cite[Thm.2.2]{6 author}, and in \secref{examples section} we will give the details of our simpler proof of this lower bound, following the outline in \secref{old examples intro subsection}.

Recall that if $H$ is a subgroup of a finite $p$--group $G$, $r_{+}(G,H)$ is defined to be the minimal $r$ such that there exists a chain of subgroups $ H = K_0 \lhd K_1 \lhd \dots \lhd K_r = G$ with each $K_{i-1}$ normal in $K_i$ and  $K_i/K_{i-1}$ cyclic.

We also defined $r_{-}(G,H)$ to be the rank of $G/H\Phi(G)$. One easily sees that $r_{-}(G,H) = r_+(G,H\Phi(G))$.

The following property is elementary but very useful.

\begin{lem}  If $N$ is normal in $G$, then $r_{+}(G,H) \geq r_{+}(G/N,HN/N)$ and $r_{-}(G,H) \geq r_{-}(G/N,HN/N)$, with equality in both cases if $N \leq H$.
\end{lem}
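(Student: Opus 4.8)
The plan is to reduce both assertions to the single statement about $r_+$, using two standard facts: the identity $r_-(G,H) = r_+(G,H\Phi(G))$ recorded just above, and the Frattini compatibility $\Phi(G/N) = \Phi(G)N/N$ for $p$--groups (immediate from $\Phi(G) = G^p[G,G]$). Granting these, for $N \lhd G$ we get
$r_-(G/N,HN/N) = r_+(G/N,(HN/N)\Phi(G/N)) = r_+(G/N,(H\Phi(G)N)/N)$,
so the two $r_-$ claims follow by applying the two $r_+$ claims with $H\Phi(G)$ in place of $H$, noting that when $N \leq H$ one has $N \leq H\Phi(G)$ and $H\Phi(G)N = H\Phi(G)$. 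Thus it suffices to prove (i) $r_+(G,H) \geq r_+(G/N,HN/N)$ for every $N \lhd G$, and (ii) equality when $N \leq H$.

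For (i), I would start from a subnormal chain realizing $r_+(G,H)$, say $H = K_0 \lhd K_1 \lhd \dots \lhd K_r = G$ with each $K_i/K_{i-1}$ cyclic, and push it forward to $HN/N = K_0N/N \leq K_1N/N \leq \dots \leq K_rN/N = G/N$. Three routine points need checking: each $K_iN$ is a subgroup (this is where $N \lhd G$ is used); $K_{i-1}N \lhd K_iN$; and $K_iN/K_{i-1}N$ is cyclic. For normality, write a general element of $K_iN$ as $kn$ with $k \in K_i$, $n \in N$: since $n \in K_{i-1}N$ it already normalizes $K_{i-1}N$, while $k$ normalizes $K_{i-1}$ (as $K_{i-1} \lhd K_i$) and $N$ (as $N \lhd G$), so $kn$ normalizes $K_{i-1}N$. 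For the quotient, the composite $K_i \hookrightarrow K_iN \ra K_iN/K_{i-1}N$ is surjective (every element of $K_iN$ differs from one of $K_i$ by an element of $N \subseteq K_{i-1}N$) and kills $K_{i-1}$, so $K_iN/K_{i-1}N$ is a quotient of the cyclic group $K_i/K_{i-1}$, hence cyclic. Deleting repeated terms, the resulting chain from $HN/N$ to $G/N$ has length $\leq r$, giving $r_+(G/N,HN/N) \leq r_+(G,H)$.

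For (ii), assume $N \leq H$, so $HN/N = H/N$. Here I would argue in the opposite direction: take a subnormal chain $H/N = \overline{K}_0 \lhd \dots \lhd \overline{K}_r = G/N$ with cyclic quotients realizing $r_+(G/N,H/N)$, and pull it back along the projection $\pi : G \ra G/N$, setting $K_i = \pi^{-1}(\overline{K}_i)$. Then $K_0 = \pi^{-1}(H/N) = H$ precisely because $N \leq H$, $K_r = G$, each $K_{i-1} \lhd K_i$ since the kernel of $K_i \ra \overline{K}_i \ra \overline{K}_i/\overline{K}_{i-1}$ is $K_{i-1}$, and $K_i/K_{i-1} \cong \overline{K}_i/\overline{K}_{i-1}$ is cyclic. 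Hence $r_+(G,H) \leq r_+(G/N,H/N)$, which with (i) gives equality; the $r_-$ equality then follows from the first paragraph, applying the equality case of $r_+$ to the subgroup $H\Phi(G) \supseteq N$.

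There is no serious obstacle. The only things to handle with a little care are the bookkeeping with the products $K_{i-1}N$ --- in particular the fact that in general one only obtains a surjection $K_i/K_{i-1} \ora K_iN/K_{i-1}N$ rather than an isomorphism, which is exactly why the inequality in (i) can be strict when $N \not\leq H$ --- and the invocation of $\Phi(G/N) = \Phi(G)N/N$, which is where the standing hypothesis that $G$ is a $p$--group enters.
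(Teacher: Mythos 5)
Your proof is correct and follows essentially the same route as the paper: push the minimal chain with cyclic subquotients forward to $G/N$ for the inequality, pull chains back along $G \ra G/N$ when $N \leq H$ to get equality, and deduce the $r_-$ statements from the $r_+$ ones via $r_-(G,H) = r_+(G,H\Phi(G))$. You merely spell out the routine verifications (normality of $K_{i-1}N$ in $K_iN$, cyclicity of the subquotients, and $\Phi(G/N) = \Phi(G)N/N$) that the paper leaves implicit.
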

\begin{proof} For $r_{+}$, the image in $G/N$ of a minimal subgroup chain between $H$ and $G$ with cyclic subquotients will be a chain between $HN/N$ and $G/N$ with cyclic subquotients.  If $N \leq H$ this will be a bijection between such chains.  The statement for $r_{-}$ can be easily checked directly, or deduced from the $r_{+}$ case, since $r_{-}(G,H) = r_{+}(G,H\Phi(H))$.
\end{proof}

\begin{cor} $r_{+}(G,H) \geq r_{-}(G,H)$.
\end{cor}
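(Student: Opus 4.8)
The plan is to deduce this immediately from the preceding lemma together with the identity $r_{-}(G,H) = r_{+}(G,H\Phi(G))$ recorded just above it, by passing to the Frattini quotient $G/\Phi(G)$. Since $\Phi(G)$ is characteristic, hence normal, in $G$, the lemma applies with $N = \Phi(G)$, and the whole argument becomes a short chain of (in)equalities.

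Concretely, here is the sequence of steps I would carry out. First, the inequality half of the preceding lemma, applied with $N = \Phi(G)$, gives
$$ r_{+}(G,H) \ \geq\ r_{+}(G/\Phi(G),\, H\Phi(G)/\Phi(G)). $$
Second, I would apply the equality half of the same lemma, but now to the subgroup $H\Phi(G) \leq G$ in place of $H$, again with $N = \Phi(G)$; since $\Phi(G) \leq H\Phi(G)$ the hypothesis for equality is met, and $(H\Phi(G))\Phi(G)/\Phi(G) = H\Phi(G)/\Phi(G)$, so
$$ r_{+}(G,\, H\Phi(G)) \ =\ r_{+}(G/\Phi(G),\, H\Phi(G)/\Phi(G)). $$
Finally, rewriting the right-hand side via $r_{-}(G,H) = r_{+}(G,H\Phi(G))$ and stringing these together yields
$$ r_{+}(G,H)\ \geq\ r_{+}(G/\Phi(G), H\Phi(G)/\Phi(G))\ =\ r_{+}(G, H\Phi(G))\ =\ r_{-}(G,H), $$
which is exactly the claim.

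There is essentially no serious obstacle here; the only point requiring a moment's care is that $\Phi(G)$ need not be contained in $H$, so one must use the inequality half of the lemma (not its equality half) in the first step, whereas the enlarged subgroup $H\Phi(G)$ does contain $\Phi(G)$, which is precisely what licenses the equality in the second step. I note that this route avoids having to invoke separately the (easy) fact that $r_{+}(E,F)$ equals the rank of $E/F$ for elementary abelian $E$. Alternatively, one could argue directly by taking a minimal chain $H = K_0 \lhd \dots \lhd K_r = G$ with cyclic subquotients and checking that $H\Phi(G) = K_0\Phi(G) \leq \dots \leq K_r\Phi(G) = G$ is again such a chain, normality and cyclicity of the subquotients being inherited after projecting to $G/\Phi(G)$; but the two-line deduction from the lemma seems cleanest.
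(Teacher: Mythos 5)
Your proof is correct and is essentially the paper's own argument: both deduce the inequality from the preceding lemma together with the identity $r_{-}(G,H)=r_{+}(G,H\Phi(G))$, the only difference being that you quotient by $N=\Phi(G)$ while the paper specializes the lemma to $N=H\Phi(G)$ (normal since $G/\Phi(G)$ is abelian), which is an immaterial variation. Your bookkeeping about when the equality versus inequality half of the lemma applies is accurate, so there is nothing to fix.
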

\begin{proof} Specializing the lemma to the case when $N = H\Phi(G)$, one learns that
$r_{+}(G,H) \geq r_{+}(G/H\Phi(G),\{e\}) = r_{+}(G,H\Phi(H)) = r_{-}(G,H)$.
\end{proof}

The analogue of the last lemma also holds for $r_{n}(G,H)$.

\begin{lem} If $N$ is normal in $G$, then $r_{n}(G,H) \geq r_{n}(G/N,HN/N)$ for all $n$, with equality if $N \leq H$.
\end{lem}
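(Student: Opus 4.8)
The plan is to play off the two standard functors relating $G$--spaces and $G/N$--spaces against the definition of $r_n(-,-)$ via the Chromatic Smith Theorem. The first is inflation $\mathrm{infl}\colon\{G/N\text{--spaces}\}\to\{G\text{--spaces}\}$ along $G\twoheadrightarrow G/N$; the second, available because $N$ is normal, is the $N$--fixed point functor $X\mapsto X^N$, which carries a $G$--space to a $G/N$--space. Both are homotopy functors, both send finite equivariant CW complexes to finite equivariant CW complexes (inflation turns a cell $(G/N)/(K/N)\times D^j$ into the $G$--cell $G/K\times D^j$ for $N\le K$; and for a $G$--CW complex $X$, the space $X^N$ is assembled from those cells $G/K\times D^j$ with $N\le K$, each becoming $(G/N)/(K/N)\times D^j$), and both preserve retracts — so each sends finite equivariant spaces to finite equivariant spaces. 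The two bookkeeping identities I will use are $(\mathrm{infl}\,Y)^K=Y^{KN/N}$ for every $K\le G$, and $(X^N)^{K/N}=X^K$ whenever $N\le K\le G$; both are of the same elementary type used elsewhere in the paper, e.g. in \lemref{induction type lemma}.

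For the inequality $r_n(G,H)\ge r_n(G/N,HN/N)$, I would assume the $(G,H,n,m)$ Chromatic Smith Theorem, take a finite $G/N$--space $Y$ with $\widetilde K(m)_*(Y^{HN/N})=0$, and feed $\mathrm{infl}\,Y$ into that hypothesis: since $(\mathrm{infl}\,Y)^H=Y^{HN/N}$ and $(\mathrm{infl}\,Y)^G=Y^{G/N}$, one gets $\widetilde K(n)_*(Y^{G/N})=0$, i.e. the $(G/N,HN/N,n,m)$ Chromatic Smith Theorem holds. Taking $m=n+r_n(G,H)$ then gives $r_n(G/N,HN/N)\le r_n(G,H)$.

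For equality when $N\le H$ one has $HN/N=H/N$, so it remains to prove $r_n(G,H)\le r_n(G/N,H/N)$, and here I would run the same comparison the other way using $X\mapsto X^N$. Assume the $(G/N,H/N,n,m)$ Chromatic Smith Theorem, and let $X$ be a finite $G$--space with $\widetilde K(m)_*(X^H)=0$. Then $X^N$ is a finite $G/N$--space with $(X^N)^{H/N}=X^H$ and $(X^N)^{G/N}=X^G$, so the hypothesis applied to $X^N$ yields $\widetilde K(n)_*(X^G)=0$; thus the $(G,H,n,m)$ Chromatic Smith Theorem holds, and with $m=n+r_n(G/N,H/N)$ we get $r_n(G,H)\le r_n(G/N,H/N)$, completing the equality.

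The one point that is not pure formalism — and the place I expect to have to be slightly careful — is the assertion that $\mathrm{infl}\,Y$ and $X^N$ are again \emph{finite} equivariant spaces, i.e. retracts in the appropriate equivariant homotopy category of finite equivariant CW complexes; ``retract of a finite complex'' is a homotopy--invariant condition, so the argument is that inflation and $(-)^N$ are homotopy functors that visibly preserve the class of finite CW objects and commute with retractions. (One could sidestep this by phrasing the whole proof on the level of compact $G$--spectra, replacing $X^N$ by the geometric fixed point functor $(-)^{\Phi N}$ and invoking its symmetric monoidal behaviour recorded in \secref{equi stable cat sec}, together with $(S^{-W}\sm\Sinfty_G X)^{\Phi N}\simeq S^{-W^N}\sm\Sinfty(X^N)$.) Everything else is the elementary manipulation of fixed points of inflated actions and of iterated fixed points.
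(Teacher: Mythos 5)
Your proof is correct and takes essentially the same route as the paper: both inequalities come from inflation along $G\twoheadrightarrow G/N$ and the $N$--fixed-point functor, via the identities $(\mathrm{infl}\,Y)^K=Y^{KN/N}$ and $(X^N)^{K/N}=X^K$. The only difference is packaging---the paper transports extremal witness spaces furnished by \lemref{type lemma}, while you transport the Chromatic Smith statements themselves (the contrapositive of the same argument), and your finiteness remark about $\mathrm{infl}\,Y$ and $X^N$ is exactly the point the paper leaves implicit.
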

\begin{proof} \lemref{type lemma} tells us that there exists a $p$--local finite $G/N$--space $X$ such that $X^{G/N}$ has type $n$ and $X^{HN/N}$ has type $n+r_n(G/N,HN/N)$.  If we regard $X$ as a $G$--space via the quotient map $G \ra G/N$, then $X^G = X^{G/N}$ has type $n$ and $X^H = X^{HN/N}$ has type $n+r_n(G/N,HN/N)$. It follows that $n+r_n(G/N,HN/N) \leq n+ r_{n}(G,H)$.

Now suppose that $N \leq H$, and that $Y$ is a $p$--local finite $G$--space such that $Y^G$ has type $n$ and $Y^H$ has type $n+r_n(G,H)$.  If we let $X = Y^N$, then $X$ will be a $p$--local finite $G/N$--space such that $X^{G/N} = Y^G$ has type $n$ and $X^{HN/N} = Y^H$ has type $n+r_n(G,H)$.  Thus $n+ r_{n}(G,H) \leq n+r_n(G/N,HN/N)$.
\end{proof}

Specializing this lemma to the case when $N = H\Phi(G)$, one learns that $r_{n}(G,H) \geq r_{n}(G/H\Phi(G),\{e\})$, which \thmref{el ab lower bound theorem again} tells us is at least a big as the rank of $G/H\Phi(G)$.  We learn the following.

\begin{cor} $r_{n}(G,H) \geq r_{-}(G,H)$ for all $n$. \label{lower bound cor}
\end{cor}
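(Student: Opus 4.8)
The plan is to chain together the two preceding results. First I would apply the preceding lemma (the one asserting $r_n(G,H) \geq r_n(G/N, HN/N)$ with equality when $N \leq H$) to the specific normal subgroup $N = H\Phi(G)$. Since $H \leq H\Phi(G)$, the equality case applies, giving $r_n(G,H) = r_n(G/H\Phi(G),\, H\Phi(G)/H\Phi(G)) = r_n(G/H\Phi(G), \{e\})$. So the question is reduced to a lower bound for $r_n$ of an elementary abelian group relative to the trivial subgroup.

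Next I would invoke \thmref{el ab lower bound theorem again}, which says $r \leq r_n(C_p^r, \{e\})$ for all $n$ and $r$. The group $G/H\Phi(G)$ is, by definition of $r_-(G,H)$, the maximal elementary abelian $p$-group quotient of $G$ with $H$ in the kernel, hence isomorphic to $C_p^{\,r_-(G,H)}$. Applying the theorem with $r = r_-(G,H)$ yields $r_n(G/H\Phi(G), \{e\}) \geq r_-(G,H)$. Combining with the displayed equality from the previous step gives $r_n(G,H) \geq r_-(G,H)$ for all $n$, as claimed.

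This argument is essentially immediate once the two cited results are in hand, so I do not anticipate a genuine obstacle; the only point requiring the slightest care is confirming that $G/H\Phi(G)$ really is elementary abelian of rank exactly $r_-(G,H)$, which is built into the definition of $r_-(G,H)$ in Definitions (a) of \secref{introduction} — $H\Phi(G)$ is generated by $H$ together with the Frattini subgroup, and quotienting by the Frattini subgroup of a finite $p$-group always yields an elementary abelian group, so $G/H\Phi(G)$ is the largest such quotient killing $H$.
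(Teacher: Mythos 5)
Your route is the same as the paper's: specialize the lemma ``$r_n(G,H)\geq r_n(G/N,HN/N)$, with equality if $N\leq H$'' to $N=H\Phi(G)$, then apply \thmref{el ab lower bound theorem again} to the elementary abelian quotient $G/H\Phi(G)$, whose rank is $r_-(G,H)$ by definition. However, one intermediate step as written is wrong: you invoke the \emph{equality} clause of the lemma on the grounds that $H\leq H\Phi(G)$, but the equality clause requires $N\leq H$, not $H\leq N$. Here $N=H\Phi(G)$ contains $H$ and in general is not contained in it, so the equality is not justified — and indeed it is false in general. For instance, take $(G,H)=(D_8,C)$ with $C$ a noncentral subgroup of order $2$: then $H\Phi(G)$ is a Klein four subgroup, so $r_n(G/H\Phi(G),\{e\})=r_n(C_2,\{e\})=1$, whereas the paper later shows $r_n(D_8,C)=2$.

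The error is harmless for the conclusion, because your chain of reasoning only ever uses the bound in one direction: the unconditional part of the lemma already gives $r_n(G,H)\geq r_n(G/H\Phi(G),\{e\})$ (here $HN/N=\{e\}$ since $H\leq H\Phi(G)$), and combining this with $r_n(G/H\Phi(G),\{e\})\geq \operatorname{rank}\, G/H\Phi(G) = r_-(G,H)$ from \thmref{el ab lower bound theorem again} yields the corollary. With the ``$=$'' replaced by ``$\geq$'' your argument is precisely the paper's proof; just be careful not to read the equality case backwards, since the resulting stronger statement fails already for the dihedral example that motivates much of the paper.
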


Another useful corollary goes as follows.

\begin{cor} \label{normal cor} If $N$ is normal in $G$, then $r_n(G,N) = r_n(G/N, \{e\})$ for all $n$.
\end{cor}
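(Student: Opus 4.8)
The plan is to derive the statement immediately by specializing the preceding lemma (the one asserting $r_{n}(G,H) \geq r_{n}(G/N,HN/N)$ for $N$ normal in $G$, with equality when $N \leq H$) to the case $H = N$. Since $N$ is normal in $G$ and trivially $N \leq N$, the equality clause applies and gives $r_n(G,N) = r_n(G/N, NN/N)$. Because $NN/N = N/N$ is the trivial subgroup $\{e\}$ of $G/N$, the right-hand side is $r_n(G/N,\{e\})$, which is exactly the claim.

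To keep the write-up self-contained I would briefly unwind the two inequalities packaged in that lemma in this special case. For $r_n(G/N,\{e\}) \leq r_n(G,N)$: by \lemref{type lemma} choose a finite $G/N$-space $X$ with $X^{G/N}$ of type $n$ and $X$ (its $\{e\}$-fixed points) of type $n + r_n(G/N,\{e\})$; inflating $X$ to a $G$-space along the quotient $G \ra G/N$ gives $X^G = X^{G/N}$ of type $n$ and $X^N = X^{N/N} = X$ of type $n + r_n(G/N,\{e\})$, so this inflated $G$-space forces $r_n(G,N) \geq r_n(G/N,\{e\})$. For the reverse inequality: choose a finite $G$-space $Y$ with $Y^G$ of type $n$ and $Y^N$ of type $n + r_n(G,N)$; then $Y^N$ is a finite $G/N$-space with $(Y^N)^{G/N} = Y^G$ of type $n$ and $(Y^N)^{\{e\}} = Y^N$ of type $n + r_n(G,N)$, which forces $r_n(G/N,\{e\}) \geq r_n(G,N)$.

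I do not expect any real obstacle here: the corollary is a purely formal consequence of the normal-quotient lemma for $r_n$, and the only things to verify are the cosmetic identity $NN/N = \{e\}$ and the compatibility of fixed points with inflation along $G \ra G/N$ and with passage to $N$-fixed points, both of which are immediate. An alternative would be to argue directly from the definition of $r_n$ together with \lemref{type lemma}, but the one-line specialization of the lemma is the cleanest route and is the one I would record.
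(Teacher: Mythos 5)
Your proposal is correct and matches the paper's intended argument: the corollary is exactly the specialization $H=N$ of the preceding lemma ($r_n(G,H) \geq r_n(G/N,HN/N)$ with equality when $N\leq H$), using $NN/N=\{e\}$, and your unwinding via inflation along $G\ra G/N$ and passage to $N$-fixed points reproduces the paper's own proof of that lemma.
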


Now we note some transitivity properties.

\begin{lem} \label{trans lemma} Let $H<K<G$. \\

\noindent{\bf (a)} \ $r_{+}(G,H) \leq r_{+}(G,K)+ r_{+}(K,H)$. \\

\noindent{\bf (b)} \ $r_{-}(G,H) \leq r_{-}(G,K)+ r_{-}(K,H)$.
\end{lem}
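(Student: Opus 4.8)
The plan is to treat the two inequalities separately, deriving each from the same observation: the data witnessing the inclusion $H<K$ and the data witnessing $K<G$ can simply be spliced together.

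For part (a) I would argue straight from the definition of $r_+$. Choose a chain $H = K_0 \lhd K_1 \lhd \dots \lhd K_a = K$ with each $K_{i-1}$ normal in $K_i$ and each $K_i/K_{i-1}$ cyclic, of minimal length $a = r_+(K,H)$, and likewise a minimal chain $K = L_0 \lhd L_1 \lhd \dots \lhd L_b = G$ with $b = r_+(G,K)$. Splicing these gives a chain $H = K_0 \lhd \dots \lhd K_a = K = L_0 \lhd \dots \lhd L_b = G$ of length $a+b$; since the definition of $r_+$ asks only that each consecutive subquotient be cyclic, and not for any global subnormality in $G$, this is an admissible chain from $H$ to $G$, whence $r_+(G,H) \le a+b = r_+(G,K) + r_+(K,H)$.

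For part (b) I would use the reformulation of $r_-$ coming from the Burnside basis theorem: for any subgroup $Y$ of a finite $p$--group $X$,
\[ r_-(X,Y) = \operatorname{rank}\bigl(X/Y\Phi(X)\bigr) = \min\{\, |S| : S \subseteq X,\ \langle Y \cup S\rangle = X \,\}. \]
(A subset $S$ satisfies $\langle Y\cup S\rangle = X$ exactly when its image spans the elementary abelian group $X/Y\Phi(X)$, because $N\Phi(X) = X$ forces $N = X$, and the least spanning set of that group has size equal to its rank.) Granting this, pick $S$ with $\langle K\cup S\rangle = G$ and $|S| = r_-(G,K)$, and $T$ with $\langle H\cup T\rangle = K$ and $|T| = r_-(K,H)$. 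Then
\[ \langle H \cup (S\cup T)\rangle = \langle\, \langle H\cup T\rangle \cup S \,\rangle = \langle K\cup S\rangle = G, \]
so $S\cup T$ exhibits $r_-(G,H) \le |S| + |T| = r_-(G,K) + r_-(K,H)$. Alternatively, part (b) can be deduced from part (a) via the identity $r_-(X,Y) = r_+(X, Y\Phi(X))$ recorded above together with the elementary inclusion $\Phi(K) = K^p[K,K] \subseteq G^p[G,G] = \Phi(G)$: applying (a) to $H\Phi(G) \le K\Phi(G) \le G$ gives $r_-(G,H) = r_+(G,H\Phi(G)) \le r_+(G,K\Phi(G)) + r_+(K\Phi(G),H\Phi(G)) = r_-(G,K) + r_+(K\Phi(G),H\Phi(G))$, and since $\Phi(K)\subseteq\Phi(G)$ realizes $K\Phi(G)/H\Phi(G)$ as an elementary abelian quotient of $K/H\Phi(K)$, the last term is at most $\operatorname{rank}(K/H\Phi(K)) = r_-(K,H)$.

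I do not anticipate a genuine obstacle: part (a) is formal, and part (b) rests only on the Burnside basis theorem (equivalently, on the containment $\Phi(K)\subseteq\Phi(G)$), which is standard. The single point needing care is to keep $\Phi(K)$ distinct from $\Phi(G)$ when comparing $r_-(K,H)$ with data living inside $G$.
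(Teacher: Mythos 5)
Your proposal is correct and matches the paper, which simply asserts that both inequalities are clear from the definitions: your chain-splicing argument for $r_+$ and your Burnside-basis (generator-counting) argument for $r_-$ are precisely the definitional justifications intended. No gap to report; your careful note that $\Phi(K)\subseteq\Phi(G)$ for $p$--groups is a fine (if optional) extra.
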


Both of these inequalities are clear from the definitions. Strict inequality can certainly hold in both cases: consider $\{e\} < C_2 < C_4$.

An analogous property for the blue shift numbers goes as follows.

\begin{lem} Let $H<K<G$. Then
$$ r_{n}(G,H) \leq r_{n}(G,K)+ \max_{m\leq n+r_n(G,K)}\{r_{m}(K,H)\}.$$
\end{lem}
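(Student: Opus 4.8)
The plan is to work directly with fixed-point types. Combining Ravenel's acyclicity theorem (which, as noted just before the definition of $r_n(G,H)$, shows the $(G,H,n,m)$ Chromatic Smith Theorem persists when $m$ is increased) with the dictionary $\widetilde K(j)_*(Y)=0 \iff \operatorname{type}(Y)>j$, one sees that $r_n(G,H)$ is the least $r$ such that every finite $G$--space $X$ with $\operatorname{type}(X^G)\le n$ satisfies $\operatorname{type}(X^H)\le n+r$. So it suffices to produce, for an arbitrary finite $G$--space $X$ with $\operatorname{type}(X^G)\le n$, the bound $\operatorname{type}(X^H)\le n+r_n(G,K)+\max_{m\le n+r_n(G,K)}r_m(K,H)$.

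First I would apply the defining property of $r_n(G,K)$ to $X$: since $X$ is a finite $G$--space with $\operatorname{type}(X^G)\le n$, we get $\operatorname{type}(X^K)\le n+r_n(G,K)$. Set $m=\operatorname{type}(X^K)$, so $0\le m\le n+r_n(G,K)$; in particular $m$ is finite, hence so is $r_m(K,H)$.

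Next I would restrict $X$ along $K<G$ to view it as a finite $K$--space $\Res^G_K X$; since fixed points for subgroups of $K$ are unchanged by restriction, $(\Res^G_K X)^K=X^K$ has type $m$ and $(\Res^G_K X)^H=X^H$. Applying the defining property of $r_m(K,H)$ to $\Res^G_K X$ gives $\operatorname{type}(X^H)\le m+r_m(K,H)$. Since $m\le n+r_n(G,K)$, we conclude
$$\operatorname{type}(X^H)\ \le\ \bigl(n+r_n(G,K)\bigr)+r_m(K,H)\ \le\ n+r_n(G,K)+\max_{m'\le n+r_n(G,K)}r_{m'}(K,H),$$
which, as $X$ was arbitrary, is the claim.

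The proof is short, and the only genuinely delicate point is the presence of the maximum. After the first step the chromatic level $m$ at which $X^K$ fails to be acyclic is pinned only to the interval $[0,\,n+r_n(G,K)]$ rather than to $n$ itself, and there is no available monotonicity of $r_m(K,H)$ in $m$ with which to collapse the maximum, so one must take the worst case of $r_{m'}(K,H)$ over that whole finite range. One should also note in passing the easy facts that restriction preserves fixed-point spaces for subgroups and that all blue shift numbers appearing are finite, so the maximum is over a finite set of finite integers.
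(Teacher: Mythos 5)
Your proof is correct and is essentially the paper's argument: both chain through $K$, using the defining property of $r_n(G,K)$ to pin $\operatorname{type}(X^K)=m$ into $[0,\,n+r_n(G,K)]$ and then the defining property of $r_m(K,H)$ (applied to $X$ viewed as a $K$--space) to bound $\operatorname{type}(X^H)$, with the max absorbing the unknown $m$. The only cosmetic difference is that the paper runs the argument on an extremal witness $X$ with $\operatorname{type}(X^G)=n$ and $\operatorname{type}(X^H)=n+r_n(G,H)$ (via the type-realization lemma), whereas you verify the bound directly for every $X$ with $\operatorname{type}(X^G)\le n$, which needs no realization step.
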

\begin{proof}  Let $l = n+r_n(G,H)$, and let $X$ be a $p$--local finite $G$--space such that $X^G$ has type $n$ and $X^H$ has type $l$.  Let $m$ be the type of $X^K$.  Then $m \leq n+r_n(G,K)$ and $l \leq m + r_m(K,H)$, so that
$$r_{n}(G,H)  = l-n = (m-n) + (l-m) \leq r_{n}(G,K)+ r_{m}(K,H).$$
\end{proof}

\begin{cor} $r_n(G,H) \leq r_+(G,H)$ for all $n$. \label{upper bound cor}
\end{cor}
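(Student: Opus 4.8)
The plan is to combine the transitivity lemma for blue shift numbers (the lemma immediately preceding the corollary) with the cyclic upper bound \thmref{cyclic group upper bound} and \corref{normal cor}, running an induction along a minimal subgroup chain. Fix $r = r_+(G,H)$ and choose a chain $H = K_0 \lhd K_1 \lhd \dots \lhd K_r = G$ with each $K_{i-1}$ normal in $K_i$ and each $K_i/K_{i-1}$ cyclic; since $G$ is a finite $p$--group, each subquotient $K_i/K_{i-1}$ is a cyclic $p$--group.

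The key claim, proved by induction on $j$, is: for every $j$ with $0 \leq j \leq r$ and every $n$, one has $r_n(K_j,H) \leq j$. The base case $j=0$ is $r_n(H,H)=0$. For the inductive step, assume $r_m(K_{j-1},H) \leq j-1$ for \emph{all} $m$. Apply the transitivity lemma to $H < K_{j-1} < K_j$:
$$r_n(K_j,H) \leq r_n(K_j,K_{j-1}) + \max_{m \leq n + r_n(K_j,K_{j-1})}\{ r_m(K_{j-1},H)\}.$$
By \corref{normal cor}, since $K_{j-1}$ is normal in $K_j$, we have $r_n(K_j,K_{j-1}) = r_n(K_j/K_{j-1},\{e\})$, and as $K_j/K_{j-1}$ is a cyclic $p$--group, \thmref{cyclic group upper bound} gives $r_n(K_j,K_{j-1}) \leq 1$. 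The inductive hypothesis (being uniform in the index $m$) bounds the maximum term by $j-1$. Hence $r_n(K_j,H) \leq 1 + (j-1) = j$, completing the induction. Taking $j=r$ yields $r_n(G,H) \leq r = r_+(G,H)$ for all $n$.

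This argument is essentially bookkeeping, so there is no serious obstacle; the one point requiring a little care is that the transitivity lemma controls $r_n(K_j,H)$ in terms of a \emph{maximum} of $r_m(K_{j-1},H)$ over a range of $m$ depending on $n$, so the inductive statement must be phrased uniformly in $n$ (equivalently in $m$) rather than for a single fixed value. Once the hypothesis is stated in that uniform form, the maximum is harmless and the induction closes immediately.
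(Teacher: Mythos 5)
Your proof is correct and is essentially the paper's argument: both run an induction along a minimal chain $H = K_0 \lhd \dots \lhd K_r = G$ using the transitivity lemma together with \corref{normal cor} and \thmref{cyclic group upper bound}. The only difference is the direction of the induction — you induct up the chain on $j$, proving $r_n(K_j,H)\leq j$ and applying the cyclic bound to the top pair $(K_j,K_{j-1})$, whereas the paper inducts on $r_+(G,H)$, applying the cyclic bound to the bottom pair $(K_1,H)$ and the inductive hypothesis to $(G,K_1)$; in your version the maximum in the transitivity lemma falls on the inductively controlled pair, so the uniformity in $m$ that you explicitly note is exactly the point that makes it close.
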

\begin{proof} We prove this by induction on $r_+(G,H)$.

$r_+(G,H) = 1$ means that $H$ is normal in $G$ and $G/H$ is cyclic.  But then $r_n(G,H) = r_n(G/H, \{e\})$ by \corref{normal cor}, and $r_n(G/H, \{e\}) \leq 1 = r_+(G,H)$ by \thmref{cyclic group upper bound}.

For the inductive step, let $ H = K_0 \lhd K_1 \lhd \dots \lhd K_r = G$ be a minimal chain
with each $K_i/K_{i-1}$ cyclic.  By inductive hypothesis, $r_n(G,K_1) \leq r_+(G,K_1)$ for all $n$, and $r_{m}(K_1,H) \leq 1$ for all $m$, by the case just discussed. The lemma applied to $H<K_1<G$ then implies that
\begin{equation*}
\begin{split}
r_{n}(G,H) &
\leq r_{n}(G,K_1)+ \max_{m\leq n+r_n(G,K_1)}\{r_{m}(K_1,H)\}  \\
  & \leq r_+(G,K_1) + 1 = r_+(G,H).
\end{split}
\end{equation*}
\end{proof}

\corref{lower bound cor} and \corref{upper bound cor} combine to show that
$$ r_{-}(G,H) \leq  r_n(G,H) \leq r_{+}(G,H)$$
for all $H<G$ and all $n$.

We end this section by noting some differences between our upper and lower bound functions.

\begin{lem} If $H<K<G$, then $r_{-}(G,H) \geq r_{-}(G,K)$.
\end{lem}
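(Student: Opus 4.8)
The plan is to unwind the definition $r_{-}(G,H) = \operatorname{rank} G/H\Phi(G)$ and exploit the fact that $H \leq K$ forces an inclusion of the relevant normal subgroups, hence a surjection of the quotient groups.

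First I would observe that since $H < K$ we have $H\Phi(G) \leq K\Phi(G)$, and both of these subgroups contain the Frattini subgroup $\Phi(G)$. Consequently $G/H\Phi(G)$ and $G/K\Phi(G)$ are both quotients of the maximal elementary abelian $p$-group quotient $G/\Phi(G)$; in particular each is itself elementary abelian. The inclusion $H\Phi(G) \leq K\Phi(G)$ induces a natural surjective homomorphism
$$ G/H\Phi(G) \twoheadrightarrow G/K\Phi(G). $$

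Next I would invoke the elementary fact that a surjective homomorphism of finite elementary abelian $p$-groups (equivalently, a surjective $\F_p$-linear map of finite-dimensional $\F_p$-vector spaces) cannot increase rank. Applying this to the displayed surjection gives
$$ r_{-}(G,H) = \operatorname{rank} G/H\Phi(G) \geq \operatorname{rank} G/K\Phi(G) = r_{-}(G,K), $$
which is exactly the claim. (Alternatively, one could deduce this from the earlier lemma on the behavior of $r_-$ under passage to quotients by taking $N = H\Phi(G)$ and noting $r_-(G,K) = r_-(G/N, KN/N)$, but the direct rank argument above is shortest.)

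There is essentially no obstacle here: the statement is a purely group-theoretic monotonicity fact, and the only point requiring any care is recording that both quotients in question are elementary abelian, so that "rank" behaves like vector-space dimension and is monotone under surjections. No topology or Morava $K$-theory input is needed.
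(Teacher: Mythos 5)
Your argument is correct: since $H\Phi(G)\leq K\Phi(G)$, the induced surjection $G/H\Phi(G)\twoheadrightarrow G/K\Phi(G)$ of elementary abelian $p$-groups can only decrease rank, which is exactly the claim. The paper states this lemma without proof, treating it as elementary, and your direct rank-monotonicity argument (or the alternative via the earlier quotient lemma with $N=H\Phi(G)$) is precisely the intended justification.
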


\begin{ex} Let $D_{16}$ be the dihedral group of order 16, and let $C$ be a noncentral subgroup of order 2.  Then $r_+(D_{16},\{e\}) = 2$, while $r_+(D_{16},C) = 3$. (We note that $r_-(D_{16},\{e\}) = 2$, while $r_-(D_{16},C) = 1$.)
\end{ex}

\begin{ex} Related to this last example, let $D_{2^{k+1}}$ be the dihedral group of order $2^{k+1}$ and let $C$ be a noncentral subgroup of order 2.  Then $r_-(D_{2^{k+1}},C) = 1$ while $r_+(D_{2^{k+1}},C) = k$.  This illustrates that $r_+(G,H) - r_-(G,H)$ can be arbitrarily large.
\end{ex}

\begin{lem}  If $H_1 < G_1$ and $H_2 < G_2$, then
$$r_{-}(G_1 \times G_2, H_1 \times H_2) = r_{-}(G_1,H_1) + r_{-}(G_2,H_2).$$
\end{lem}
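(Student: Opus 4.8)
The plan is to unwind the definition $r_-(G,H) = \operatorname{rank} G/H\Phi(G)$ and reduce everything to the elementary fact that the Frattini subgroup of a direct product of $p$-groups is the product of the Frattini subgroups. The target equality is then just additivity of rank over direct products.

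First I would record the identity $\Phi(G_1 \times G_2) = \Phi(G_1) \times \Phi(G_2)$. For finite $p$-groups this is immediate from $\Phi(G) = [G,G]\,G^p$ (where $G^p$ is the subgroup generated by $p$th powers), together with the facts $[G_1 \times G_2, G_1 \times G_2] = [G_1,G_1] \times [G_2,G_2]$ and $(G_1 \times G_2)^p = G_1^p \times G_2^p$, the latter because $(g_1,g_2)^p = (g_1^p, g_2^p)$. Next I would observe that for the subgroups in question this distributes through the product: $(H_1 \times H_2)\,\Phi(G_1 \times G_2) = (H_1\Phi(G_1)) \times (H_2\Phi(G_2))$ inside $G_1 \times G_2$, since $H_1 \times H_2$ and $\Phi(G_1)\times\Phi(G_2)$ are both products. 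Hence there is a natural isomorphism
$$ (G_1 \times G_2)/(H_1 \times H_2)\Phi(G_1 \times G_2) \;\cong\; \big(G_1/H_1\Phi(G_1)\big) \times \big(G_2/H_2\Phi(G_2)\big). $$

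Both factors on the right are elementary abelian $p$-groups (each is a quotient of $G_i/\Phi(G_i)$), so their product is elementary abelian of rank equal to the sum of the two ranks; this gives $r_-(G_1 \times G_2, H_1 \times H_2) = r_-(G_1,H_1) + r_-(G_2,H_2)$. The only step with any content is the Frattini-of-a-product identity, and even that is routine once one uses the description $\Phi(G) = [G,G]G^p$ valid for finite $p$-groups; everything else is formal manipulation of quotients and the observation that $\dim_{\F_p}$ is additive over direct sums. Alternatively, one could phrase the argument via $r_-(G,H) = r_+(G, H\Phi(G))$, but since $r_+$ is not additive over products in general, that route still forces the same direct computation of the quotient, so I would present the argument as above.
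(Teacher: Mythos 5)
Your proof is correct: the identity $\Phi(G_1\times G_2)=\Phi(G_1)\times\Phi(G_2)$ (via $\Phi(G)=[G,G]G^p$ for $p$-groups), the resulting splitting $(H_1\times H_2)\Phi(G_1\times G_2)=H_1\Phi(G_1)\times H_2\Phi(G_2)$, and additivity of rank over products give exactly the stated equality. The paper states this lemma without proof, treating it as immediate from the definition $r_-(G,H)=\operatorname{rank} G/H\Phi(G)$, and your argument is precisely the routine verification it has in mind.
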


Regarding our upper bound, \lemref{trans lemma}(a) implies that
$$r_{+}(G_1 \times G_2, H_1 \times H_2) \leq r_{+}(G_1,H_1) + r_{+}(G_2,H_2),$$
but the next example shows that strict inequality can happen, even when one of the pairs is as trivial as possible.

\begin{ex} \label{M_4(2) product example} Let $a$ be the generator of the cyclic group $C_2$, and let $M_4(2)$ be the `modular maximal--cyclic group of order 16', a group generated by elements $b,c$ satisfying $b^8 = c^2 = e$ and $cbc = b^5$.  (It is group 16\#6 in the GAP Small Groups Library.) Let $C = \langle c \rangle$, a noncentral subgroup of order 2.   Then $r_+(C_2,\{e\}) = 1$ and $r_+(M_4(2),C) = 2$, but $r_{+}(C_2 \times M_4(2), \{e\} \times C) = 2$.  To see this last fact, we have a chain of normal subgroups
$$ \{e\} \times C = \langle c \rangle \lhd \langle ab^2, c \rangle \lhd \langle a,b,c \rangle = C_2 \times M_4(2),$$
with $\langle ab^2, c \rangle/\langle c \rangle \simeq \langle ab^2 \rangle \simeq C_4$, and
$\langle a,b,c \rangle/\langle ab^2, c \rangle \simeq \langle a,b \rangle/\langle ab^2 \rangle \simeq C_4$.
\end{ex}

\begin{rem}  From this one learns that $r_n(C_2 \times M_4(2), \{e\} \times C) = 2$ for all $n$, while $r_n(M_4(2),C)$ can't be determined: it is either $1=r_-(M_4(2),C)$ or $2=r_+(M_4(2),C)$.
\end{rem}

\section{Jeff Smith's construction, and the proof of \thmref{equivalence thm}} \label{proof of smith = floyd thm}

The goal of this section is to prove \thmref{equivalence thm} which said that if the $(G,H,n,m)$ Chromatic Smith Theorem is true then the $(G,H,n,m)$ Chromatic Floyd Theorem is true.

In fact, we prove the contrapositive of this, and thanks to \lemref{unstable = stable Smith lemma} and \lemref{unstable = stable Floyd lemma}, we can translate this into the following statement about spectra in $\mathcal C_G$:

{\em Suppose that there exists $X \in \mathcal C_G$ such that $k_m(X^{\Phi(H)})< k_n(X^{\Phi(G)})$.
Then there exists $F \in \mathcal C_G$ with $k_m(F^{\Phi(H)})=0$ and $k_n(F^{\Phi(G)}) \neq 0$.} \\

This then will follow from the following more general theorem.

\begin{thm} \label{smash summand theorem}  Suppose $X \in \mathcal S(G)_{(p)}$ satisfies
$$\dim_{K(m)_*} K(m)_*(X^{\Phi(H)})< \dim_{K(n)_*} K(n)_*(X^{\Phi(G)})$$
for some $H<G$, $m$, and $n$. Then there exists a $k$, and an idempotent $e \in \Z_{(p)}[\Sigma_k]$ such that the associated wedge summand $F=eX^{\sm k}$ of $X^{\sm k}$ satisfies  $K(m)_*(F^{\Phi(H)})=0$ while $K(n)_*(F^{\Phi(G)}) \neq 0$.
\end{thm}

In the remainder of this section, we explain what this means and prove the theorem.

\subsection{Using idempotents to split $G$--spectra} \label{idempotent section}  Given $Y \in \Sp(G)$, $\{Y,Y\}_G$, the set of homotopy classes of self maps of $Y$, is a ring with multiplication given by composition\footnote{Our reason for working with spectra in this section is precisely because rings of homotopy classes of self maps form a ring, which is not necessarily the case for based spaces, even those which are double suspensions.}.

Recall that $e \in \{Y,Y\}_G$ is idempotent if $e^2=e$.  The map $e$ then determines a wedge summand $eY$ of $Y$, or, more precisely, an object $eY \in \Sp(G)$, together with maps $r: Y \ra eY$ and $i: eY \ra Y$ such that the diagram
\begin{equation} \label{idempotent diagram}
\SelectTips{cm}{}
\xymatrix{
 Y \ar[dr]^r \ar[rr]^e && Y \ar[dr]^r &  \\
& eY \ar@{=}[rr] \ar[ur]^i && eY }
\end{equation}
commutes up to homotopy.

There are two well known ways to show this.

The spectrum $eY$ can be defined to be the mapping telescope
$$ Tel \{Y \xra{e} Y  \xra{e}Y \xra{e} Y \xra{e} \dots\},$$
and properties of this construction define $r$ and $i$.

Alternatively, and equivalently, $eY$ can be defined using Brown Representability: $eY \in \Sp(G)$ is defined so that there is a natural isomorphism $\{Z,eY\}_G \simeq e^*\{Z,Y\}_G$ for all $Z$. The maps $r$ and $i$ then correspond to evident natural transformations.

The first approach is used in \cite[\S 6.4]{ravenel orange book}, the second is described in \cite[\S 2]{harris kuhn}, both are mentioned in \cite[\S 1]{mitchell math zeit}, and these constructions likely go back to the 1960's.

This construction behaves well with respect to taking geometric fixed points.  The idempotent $e: Y \ra Y$ will induce an idempotent map on $Y^{\Phi(H)}$ for each $H \leq G$; by abuse of notation, we write this as $e: Y^{\Phi(H)} \ra Y^{\Phi(H)}$.  Applying the geometric fixed point functor for $H$ to (\ref{idempotent diagram}) yields the homotopy commutative diagram
\begin{equation}
\SelectTips{cm}{}
\xymatrix{
 Y^{\Phi(H)} \ar[dr]^r \ar[rr]^e && Y^{\Phi(H)} \ar[dr]^r &  \\
& (eY)^{\Phi(H)} \ar@{=}[rr] \ar[ur]^i && (eY)^{\Phi(H)}, }
\end{equation}
from which one can conclude that there is a natural equivalence
$$(eY)^{\Phi(H)} \simeq e(Y^{\Phi(H)}).$$

Relevant for us, note that, for all $n\geq0$, there will thus be a natural isomorphism
$$K(n)_*((eY)^{\Phi(H)}) \simeq eK(n)_*(Y^{\Phi(H)}),$$
where we write $e:K(n)_*(Y^{\Phi(H)}) \ra K(n)_*(Y^{\Phi(H)})$ for the homomorphism induced by $e: Y^{\Phi(H)} \ra Y^{\Phi(H)}$.

\subsection{Using idempotents in the group ring of symmetric groups}

We now specialize the discussion of the last subsection to the case when $Y = X^{\sm k}$, with $X \in \Sp(G)_{(p)}$.

The $k$th symmetric group $\Sigma_k$ acts naturally on the $k$--fold smash product $X^{\sm k}$ by permuting the factors, inducing a ring homomorphism
$$\Z[\Sigma_k] \ra \{X^{\sm k}, X^{\sm k}\}_G.$$
Since $X$ is $p$-local, this extends to a ring homomorphism
$$\Z_{(p)}[\Sigma_k] \ra \{X^{\sm k}, X^{\sm k}\}_G.$$
Thus an idempotent $e \in \Z_{(p)}[\Sigma_k]$ induces an idempotent map $e: X^{\sm k} \ra X^{\sm k}$, and thus a wedge summand $e X^{\sm k} \in \Sp(G)_{(p)}$.

There are  then natural equivalences
$$ (e X^{\sm k})^{\Phi(H)} \simeq e((X^{\sm k})^{\Phi(H)}) \simeq e ((X^{\Phi(H)})^{\sm k}).$$
Here, the second equivalence holds because the geometric fixed point functors are monoidal.

From this, and the Kunneth isomorphism for Morava $K$--theory, we see that there will be natural isomorphisms, for all $n$,
\begin{equation} \label{Kn(eX smash k) equation} K(n)_*((e X^{\sm k})^{\Phi(H)})\simeq e (K(n)_*(X^{\Phi(H)})^{\otimes k}),
\end{equation}
where the action of $\Sigma_k$ on $K(n)_*(X^{\Phi(H)})^{\otimes k}$ is the standard (signed) permutation action
if $p$ is odd (or $n=0$), and is the action induced by the exotic symmetric monoidal structure described in \secref{morava k theory background section} if $p=2$.

\subsection{The strategy of the proof of \thmref{smash summand theorem}}

Now we can explain the strategy of the proof of \thmref{smash summand theorem}.

Our assumption is that $X \in \mathcal S(G)_{(p)}$ satisfies
$$\dim_{K(m)_*} K(m)_*(X^{\Phi(H)})< \dim_{K(n)_*} K(n)_*(X^{\Phi(G)}).$$
(We include the possibility that $\dim_{K(n)_*} K(n)_*(X^{\Phi(G)})$ is infinite.)

It is convenient to let $W_* = K(m)_*(X^{\Phi(H)})$ and let $V_* = K(n)_*(X^{\Phi(G)})$, so that
$$ \dim_{K(m)_*} W_* < \dim_{K(n)_*} V_*.$$

Given $k \geq 1$ and an idempotent $e \in \Z_{(p)}[\Sigma_k]$, let $F = eX^{\sm k}$.  Then (\ref{Kn(eX smash k) equation}) tells us that
$$ K(m)_*(F^{\Phi(H)})\simeq e W_*^{\otimes k},$$
while
$$ K(n)_*(F^{\Phi(G)})\simeq e V_*^{\otimes k}.$$

Now suppose that one can always choose $k$ and an idempotent $e \in \Z_{(p)}[\Sigma_k]$ such that if $\F_*$ is any graded field\footnote{$\F_*$ will automatically be concentrated in even degrees if $p\neq 2$.} of characteristic 0 or $p$, and $U_*$ is a graded $\F_*$--vector space, then
\begin{equation} \label{`almost' prop}
\dim_{\F_*} eU_*^{\otimes k} \
\begin{cases}
\ = 0 & \text{if } \dim_{\F_*} U_* \text{ is `small'} \\  \  \neq 0 & \text{if } \dim_{\F_*} U_* \text{ is `large'},
\end{cases}
\end{equation}
where `small' includes $ \dim_{K(m)_*} W_*$ and `large' includes $ \dim_{K(n)_*} V_*$.
Then we could conclude that $ K(m)_*(F^{\Phi(H)})= 0$, while $ K(n)_*(F^{\Phi(G)}) \neq 0$, proving \thmref{smash summand theorem}.  (We need to include the possibility that $\F_*$ has characteristic 0 to deal with the cases when $m=0$ or $n=0$ in \thmref{smash summand theorem}).

We will `almost' show that this can be done, but there are couple of issues here:
\begin{itemize}
\item  When $p\neq 2$, one needs to handle $U_*$'s that have both even and odd dimensional parts.
\item  When $p=2$, one needs to handle the exotic symmetric monoidal structure.
\end{itemize}

The first of these is by far the most significant: one needs to keep track of the dimensions of the even and odd dimensional parts of our graded vector spaces.  In the next subsection, we will describe the precise version of (\ref{`almost' prop}) that we will use to complete the proof of \thmref{smash summand theorem}.

The second issue is easily `filtered away', as we now explain.

When $p=2$, recall that $K(n)_*(X)$ is naturally a $\Lambda_*(q)$--module where $\Lambda_*(q)$ denotes the graded Hopf algebra $K(n)_*[q]/(q^2)$, with $|q|=2^n-1$.

Our exotic structure has $U_* \otimes V_* = U_* \otimes_{K(n)_*} V_*$ as $\Lambda_*(q)$--modules, with twist isomorphism
$\tau: U_* \otimes V_* \ra V_* \otimes U_*$ given by
$\tau(x \otimes y) = y \otimes x + v_n(q(y) \otimes q(x))$.

Let $\otimes_u$ denote the standard structure: $U_* \otimes_u V_*$ is still $U_* \otimes_{K(n)_*} V_*$ as $\Lambda_*(q)$--modules, but now with twist isomorphism $\tau: U_* \otimes_u V_* \ra V_* \otimes_u U_*$ given by $\tau(x \otimes y) = y \otimes x $.

\begin{lem}  \label{p=2 twist lemma}  Let $U_*$ be a $\Lambda_*(q)$--module that is finite dimensional as a $K(n)_*$--vector space. For any $k \geq 1$ and idempotent $e \in \Z_{(2)}[\Sigma_k]$,
$$\dim_{K(n)_*} e U_*^{\otimes k} = \dim_{K(n)_*} e U_*^{\otimes_u k}.$$
\end{lem}

Thus, for the purposes of proving our theorem, we can replace the exotic twist isomorphism with the standard one.
\begin{proof}[Proof of \lemref{p=2 twist lemma}]
We filter $U_*$ as a $\Lambda_*(q)$--module in the simplest way possible: let $F_0 = \ker q$, and then let $F_1= U_*$.  This induces a filtration on $U_*^{\otimes k}$ as a $\Lambda_*(q)[\Sigma_k]$--module, with associated graded module $(F_0 \oplus F_1/F_0)^{\otimes k}$.  But this is just $(F_0 \oplus F_1/F_0)^{\otimes_u k}$, since $q$ acts trivially on $(F_0\oplus F_1/F_0)$.  Applying an idempotent is an exact process, and thus idempotents commute with filtrations. Thus we have
\begin{equation*}
\begin{split}
\dim_{K(n)_*} e U_*^{\otimes k} &
= \dim_{K(n)_*} e (F_0\oplus F_1/F_0)^{\otimes k} \\
  & = \dim_{K(n)_*} e (F_0\oplus F_1/F_0)^{ \otimes_u k} = \dim_{K(n)_*} e U_*^{\otimes_u k}.
\end{split}
\end{equation*}
\end{proof}

\subsection{Properties of some idempotents and the proof of \thmref{smash summand theorem}}

If $\F_*$ is a graded field, and $V_*$ is a graded $\F_*$--vector space, $V_*$ will have a canonical decomposition $V_* = V_*^e \oplus V_*^o$ into its even and odd graded parts.

We are considering $V_*^{\otimes k}= (V_*^e \oplus V_*^o)^{\otimes k}$, the $k$--fold tensor product over $\F_*$, of $V_*$ with itself, viewed as a $\F_*[\Sigma_k]$--module with the usual sign conventions.

\begin{defn} Let $k(d) = \binom{d+1}{2}$ for $d \geq 1$.
\end{defn}

The next two propositions describe properties of some idempotents that will suffice to complete the proof of \thmref{smash summand theorem}.

\begin{prop} \label{p=2 prop}  For all $d$, there exists an idempotent $e_d \in \Z_{(2)}[\Sigma_{k(d)}]$ such that, for all graded fields $\F_*$ of characteristic 0 or 2, and all finite dimensional graded $\F_*$--vector spaces $V_*$,
$$e_d V_*^{\otimes k(d)} \neq 0 \text{ if and only if } \dim_{\F_*} V_* \geq d.$$
\end{prop}

\begin{prop} \label{p odd prop}  Let $p$ be odd. For all $d$, there exist idempotents $e_d, e_d^{\prime} \in \Z_{(p)}[\Sigma_{(p-1)k(d)}]$ such that, for all graded fields $\F_*$ of characteristic 0 or p, and all finite dimensional graded $\F_*$--vector spaces $V_*$,
$$e_d V_*^{\otimes (p-1)k(d)} \neq 0 \text{ if and only if } (p-1)\dim_{\F_*} V_*^e + \dim_{\F_*} V_*^o \geq (p-1)d.$$
and
$$e_d^{\prime} V_*^{\otimes (p-1)k(d)} \neq 0 \text{ if and only if } (p-1)\dim_{\F_*} V_*^o + \dim_{\F_*} V_*^e \geq (p-1)d.$$
\end{prop}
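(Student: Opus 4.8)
The plan is to exhibit $e_d$ (and, for $p$ odd, also $e_d'$) as one of Jeff Smith's idempotents attached to a partition of $k(d)$, and then to decide its effect on a tensor power by computing, in characteristic $p$, which simple $\F_p[\Sigma_{k(d)}]$-modules occur there. First I would reduce to a purely algebraic statement. Since $\F_*$ is concentrated in even degrees, a finite-dimensional graded $\F_*$-module is classified up to isomorphism by the pair $(a,b)=(\dim_{\F_*}V_*^e,\dim_{\F_*}V_*^o)$, and, after choosing a homogeneous basis, $V_*^{\otimes k}$ is obtained by (faithfully flat) extension of scalars along $\F_p\hookrightarrow\F_*$ from the $\F_p[\Sigma_k]$-module $U_k(a,b)$: the $k$-fold tensor power of the $\Z/2$-graded $\F_p$-vector space with $a$ even and $b$ odd basis vectors, on which $\Sigma_k$ permutes the tensor factors with Koszul signs. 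For $p=2$ the signs are trivial, so $U_k(a,b)=(\F_2^{a+b})^{\otimes k}$ with the plain permutation action; for $p$ odd, the $a$ even basis vectors behave symmetrically under $\Sigma_k$ and the $b$ odd ones antisymmetrically. Since extension of scalars detects non-vanishing, it suffices to construct idempotents $e_d\in\Z_{(p)}[\Sigma_{k(d)}]$ with $e_d\,U_{k(d)}(a,b)\neq0$ exactly when the stated inequality holds.

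For $p=2$ take the staircase partition $\delta_d=(d,d-1,\dots,2,1)$, which has $|\delta_d|=\binom{d+1}{2}=k(d)$ and all of whose hook lengths are odd; thus $\delta_d$ is a $2$-core, hence $2$-regular, and $S^{\delta_d}=D^{\delta_d}$ lies in a block of defect zero, the hook-length product $n_{\delta_d}$ being an odd integer. Consequently $e_d:=n_{\delta_d}^{-1}c_{\delta_d}$ is a genuine idempotent in $\Z_{(2)}[\Sigma_{k(d)}]$, and, since $D^{\delta_d}$ is projective, $e_d\,U_{k(d)}(a,b)\neq0$ if and only if $D^{\delta_d}$ occurs as a composition factor of the permutation module $(\F_2^{a+b})^{\otimes k(d)}$; by characteristic-free Schur–Weyl theory this happens precisely when $\delta_d$ has at most $a+b$ rows, i.e. when $d\le\dim_{\F_*}V_*$. (Equivalently, $e_d\,(\F_2^N)^{\otimes k(d)}$ is a characteristic-$2$ Schur/Weyl module on $N$ variables indexed by $\delta_d$, non-zero iff $N\ge d$.) This proves Proposition \ref{p=2 prop}.

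For $p$ odd one runs the same pattern, but the Koszul signs now force a genuinely modular ``super'' computation, and this is exactly where the factor $p-1$, and the larger index $k(d)=\binom{(p-1)(d+1)}{2}$, appear. One fixes a suitable $p$-regular partition $\mu=\mu(d,p)$ of $k(d)$ and takes $e_d$ to be the corresponding primitive idempotent of $\Z_{(p)}[\Sigma_{k(d)}]$ (a rescaled Young symmetrizer when $\mu$ is a $p$-core), arranged so that $e_d\,U_{k(d)}(a,b)\neq0$ iff $D^\mu$ occurs as a composition factor of $U_{k(d)}(a,b)$; the second idempotent $e_d'$ is produced from the conjugate partition $\mu'$, which is what interchanges the roles of $a$ and $b$, since tensoring with the sign character swaps the symmetric and antisymmetric parts of $U_{k(d)}(a,b)$. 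The content is then to compute the modular constituents of the super tensor power $U_{k(d)}(a,b)$: by super Schur–Weyl duality it has a filtration by (super) Specht modules $S^\lambda$, with $S^\lambda$ occurring iff $\lambda_{a+1}\le b$, and one must pin down exactly which $(a,b)$ make $D^\mu$ a constituent. This is the representation-theoretic core of Jeff Smith's argument: the weighting by $p-1$ is produced by the slope-$(p-1)$ ladders governing the modular representations of $\Sigma_{k(d)}$ — equivalently by combining Mullineux's description of the sign twist with the Steinberg tensor product theorem.

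The hard step is precisely this last computation in the odd-primary case: choosing $\mu$ and verifying that $D^\mu$ is a composition factor of $U_{k(d)}(a,b)$ exactly when $(p-1)a+b\ge(p-1)d$. This is also what forces $k(d)$ to be $\binom{(p-1)(d+1)}{2}$ rather than $\binom{d+1}{2}$. By contrast the $p=2$ case rests on the classical characteristic-free Schur–Weyl fact, with no modular subtlety, and the remaining bookkeeping (extension of scalars, and the passage to the idempotent telescopes already set up earlier) is routine.
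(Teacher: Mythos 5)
There is a genuine gap, and it sits exactly where the content of this proposition lies. For the odd-prime statement you never actually choose the partition $\mu$, and you explicitly defer ``the hard step'' of verifying that $D^\mu$ occurs in the super tensor power $U_{k(d)}(a,b)$ precisely when $(p-1)a+b\ge(p-1)d$, offering only keywords (slope-$(p-1)$ ladders, Mullineux, Steinberg) in place of an argument. But that detection statement \emph{is} the proposition; without it nothing has been proved. The paper resolves this by taking $e_d=e_{\lambda_d}$ for the explicit partition $\lambda_d=((p-1)d,(p-1)(d-1),\dots,(p-1))$ of $k(d)$ --- the maximal $p$-regular partition with $d$ nonzero parts --- and invoking the careful proof of the detection property in \cite[Thm.~C.2.1]{ravenel orange book}. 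Your write-up neither identifies this partition nor reduces the claim to a precise citation, so for odd $p$ the argument is a plan rather than a proof. (Your detailed treatment of the staircase partition at $p=2$ is essentially correct, but it addresses \propref{p=2 prop}, not the statement at issue; the preliminary reduction to the $\F_p[\Sigma_{k(d)}]$-module determined by the pair $(a,b)$ is fine.)

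Your instinct about $e_d'$ --- conjugate partition plus the sign twist interchanging the even and odd parts --- is the right one, but you have folded it into the same unproved modular computation, where it buys nothing. It should instead be used to dispose of the second statement formally: granting the $e_d$ statement, apply it to the suspension $\Sigma V_*$ and use the natural identification $e_d\bigl((\Sigma V_*)^{\otimes k(d)}\bigr)\simeq \Sigma^{k(d)}\,e_{\lambda_d'}V_*^{\otimes k(d)}$, together with the observation that $(p-1)\dim_{\F_*}V_*^o+\dim_{\F_*}V_*^e=(p-1)\dim_{\F_*}(\Sigma V_*)^e+\dim_{\F_*}(\Sigma V_*)^o$; this yields the $e_d'$ statement with $e_d'=e_{\lambda_d'}$ immediately, with no Mullineux or Steinberg input, and is exactly how the paper derives it. So the repair is concrete: name $\lambda_d$, prove or precisely cite the detection property for $e_{\lambda_d}$ (Ravenel's Appendix C), and then run the suspension argument for $e_d'$.
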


We postpone a discussion of the proofs to the next subsection.  Assuming these propositions, we now complete the proof of \thmref{smash summand theorem}.

We will use the following elementary lemma in the case when $p$ is odd.

\begin{lem}  Let $p$ be an odd prime.  Suppose that nonnegative integers $a_e$, $a_o$, $b_e$, and $b_o$ satisfy $a_e+a_o < b_e+b_o$.  Then there exists $d$ such that at least one of the following inequalities holds:
$$ (p-1)a_e + a_o < (p-1)d \leq (p-1)b_e + b_o,$$
$$ (p-1)a_o + a_e < (p-1)d \leq (p-1)b_o + b_e.$$
\end{lem}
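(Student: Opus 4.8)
The plan is to set $d = a_e + a_o + 1$, or a value very close to it, and show that this choice forces at least one of the two inequalities. The key observation is that the hypothesis $a_e + a_o < b_e + b_o$ means $a_e + a_o + 1 \le b_e + b_o$, so with $d = a_e+a_o+1$ we certainly have $d \le b_e + b_o$; the real question is which of the two ``weighted'' upper bounds $(p-1)b_e + b_o$ or $(p-1)b_o + b_e$ also dominates $(p-1)d$, and simultaneously which of the weighted lower bounds $(p-1)a_e + a_o$ or $(p-1)a_o + a_e$ stays strictly below $(p-1)d$.

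First I would analyze the lower-bound side. We need $(p-1)a_e + a_o < (p-1)d$ or $(p-1)a_o + a_e < (p-1)d$, i.e. $(p-1)(d - a_e) > a_o$ or $(p-1)(d-a_o) > a_e$. With $d = a_e + a_o + 1$ these become $(p-1)(a_o+1) > a_o$ and $(p-1)(a_e+1) > a_e$, both of which hold trivially for $p \ge 2$ since $(p-1)(x+1) = (p-1)x + (p-1) \ge x+1 > x$. So \emph{both} lower inequalities hold with this $d$; the lower-bound side imposes no constraint, and the whole lemma reduces to showing at least one of $(p-1)d \le (p-1)b_e + b_o$ or $(p-1)d \le (p-1)b_o + b_e$ holds. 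Summing these two candidate upper bounds gives $(p-1)b_e + b_o + (p-1)b_o + b_e = p(b_e+b_o) \ge p\,d$ (using $d \le b_e+b_o$), hence $2(p-1)d \le p(b_e+b_o) - 2d + 2d$... here I should be a little more careful: I want $\max\{(p-1)b_e+b_o,\ (p-1)b_o+b_e\} \ge (p-1)d$, and since the max of two numbers is at least their average, it suffices that $\tfrac{1}{2}\big((p-1)b_e+b_o + (p-1)b_o+b_e\big) = \tfrac{p}{2}(b_e+b_o) \ge (p-1)d$. Since $d \le b_e+b_o$ this follows from $\tfrac{p}{2} \ge p-1$, i.e. $p \le 2$ — which fails for odd $p$. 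So a crude averaging argument is not quite enough, and this is the step I expect to be the main obstacle.

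To fix this I would not insist on $d = a_e+a_o+1$ but instead choose $d$ adaptively: work with the two intervals $I_e = \big((p-1)a_e + a_o,\ (p-1)b_e + b_o\big]$ and $I_o = \big((p-1)a_o + a_e,\ (p-1)b_o + b_e\big]$, both of which are nonempty intervals of integers (their right endpoints exceed their left endpoints precisely because $a_e < b_e$ or $a_o < b_o$ in the appropriate combination — more precisely, $(p-1)b_e + b_o - (p-1)a_e - a_o = (p-1)(b_e-a_e) + (b_o - a_o)$, and at least one of $b_e - a_e$, $b_o - a_o$ is positive, but they could have opposite signs, so I need the weighting to save me). The cleanest route is to observe that an integer $d$ lies in $\tfrac{1}{p-1}I_e$ exactly when $(p-1)d \in I_e$, so I want to show $\tfrac{1}{p-1}I_e \cup \tfrac{1}{p-1}I_o$ contains an integer. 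I would bound the total ``length'' of the union of these two scaled intervals from below by something exceeding $1$, using the hypothesis $a_e + a_o \le b_e + b_o - 1$ together with the weights $(p-1)$ and $1$ summing correctly across the two intervals; the asymmetric weighting — $(p-1)$ on $b_e$ in one interval and on $b_o$ in the other — is exactly what makes the combined length large enough. This is a short case analysis (according to the signs of $b_e - a_e$ and $b_o - a_o$) rather than a single slick inequality, and I'd present it that way: if both differences are nonnegative take $d = a_e + a_o + 1$ as above; if one is negative, the other compensates with the $(p-1)$ weight in the corresponding interval, forcing that single interval alone to have scaled length $> 1$ and hence to contain an integer $d$.
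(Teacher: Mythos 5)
Your plan has a genuine gap in its final case analysis. The prescription ``if both differences $b_e-a_e$, $b_o-a_o$ are nonnegative, take $d=a_e+a_o+1$'' is false: take $p=5$, $(a_e,a_o,b_e,b_o)=(2,0,2,1)$, so $a_e+a_o=2<3=b_e+b_o$ and both differences are nonnegative. Then $d=3$ gives $(p-1)d=12$, while $(p-1)b_e+b_o=9$ and $(p-1)b_o+b_e=6$, so neither upper inequality holds for your chosen $d$ (the lemma is still true here, via $d=1$: $2<4\leq 6$). The earlier observation that $d=a_e+a_o+1$ makes both \emph{lower} inequalities hold is a red herring; the lemma does not assert that a single $d$ works uniformly, only that some $d$ lands in one of the two half-open intervals, and the correct $d$ can be much smaller than $a_e+a_o+1$. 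Your averaging attempt correctly diagnosed that a ``uniform $d$'' argument cannot work for odd $p$, but the fix you then propose reinstates exactly such a uniform choice in the both-nonnegative case.

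The repair is the interval-length argument you sketch for the mixed-sign case, applied in \emph{every} case, keyed only to which difference is positive --- and this is in fact the paper's proof. Since $(b_e-a_e)+(b_o-a_o)\geq 1$, at least one of $b_e-a_e\geq 1$, $b_o-a_o\geq 1$ holds. If $b_e-a_e\geq 1$, then adding $(p-2)(b_e-a_e)\geq p-2$ to $(b_e-a_e)+(b_o-a_o)\geq 1$ gives
$$\bigl[(p-1)b_e+b_o\bigr]-\bigl[(p-1)a_e+a_o\bigr]=(p-1)(b_e-a_e)+(b_o-a_o)\geq p-1,$$
regardless of the sign of $b_o-a_o$; hence the half-open interval $\bigl((p-1)a_e+a_o,\,(p-1)b_e+b_o\bigr]$ has length at least $p-1$ and so contains a multiple $(p-1)d$ of $p-1$, giving the first inequality. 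The case $b_o-a_o\geq 1$ is symmetric and gives the second. Note in particular that no case split on the sign pattern of the two differences is needed, and no explicit formula for $d$ is claimed.
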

\begin{proof}  The inequality $a_e+a_o < b_e+b_o$ implies that $(b_e-a_e) + (b_o-a_o) \geq 1$, so at least one of the inequalites $(b_e-a_e)\geq 1$ and $(b_o-a_o)\geq 1$ is true.

If $(b_e-a_e)\geq 1$, then $(p-2)(b_e-a_e)\geq p-2$. Adding that to $(b_e-a_e) + (b_o-a_o) \geq 1$ shows that $[(p-1)b_e + b_o] - [(p-1)a_e + a_o] \geq p-1$.  This, in turn, clearly implies that there exists $d$ such that
$$ (p-1)a_e + a_o < (p-1)d \leq (p-1)b_e + b_o.$$

The case when $(b_o-a_o)\geq 1$ is similar.
\end{proof}

\begin{proof}[Proof of \thmref{smash summand theorem}]  Suppose that a $p$--local $G$--spectrum $X$ satisfies $$\dim_{K(m)_*} K(m)_*(X^{\Phi(H)})< \dim_{K(n)_*} K(n)_*(X^{\Phi(G)}).$$
We need to show that there is then a finite $G$--spectrum $F$ of the form $F = eX^{\sm k}$ with $K(m)_*(F^{\Phi(H)})=0$ and $K(n)_*(F^{\Phi(G)}) \neq 0$.

If $p=2$, we let $d = \dim_{K(m)_*} K(m)_*(X^{\Phi(H)})+1$, so that
$$ \dim_{K(m)_*} K(m)_*(X^{\Phi(H)}) < d \leq \dim_{K(n)_*} K(n)_*(X^{\Phi(G)}).$$
Now  let $F = e_d X^{\sm k(d)}$.  If we let $W_* = K(m)_*(X^{\Phi(H)})$ and $V_* = K(n)_*(X^{\Phi(G)})$,
then $K(m)_*(F^{\Phi(H)}) \simeq e_dW_*^{\otimes k(d)}$ and $K(n)_*(F^{\Phi(G)}) \simeq e_dV_*^{\otimes k(d)}$.  \propref{p=2 prop} then applies to tell us that $F$ has the desired properties:
$$K(m)_*(F^{\Phi(H)})=0 \text{ and } K(n)_*(F^{\Phi(G)}) \neq 0.$$

If $p$ is odd, one argues similarly, using \propref{p odd prop}. The lemma shows that there exists a $d$ such that at least one of the following is true:
\begin{multline*}\tag*{{\bf (a)}}
(p-1)\dim_{K(m)_*} K(m)_*(X^{\Phi(H)})^e+ \dim_{K(m)_*} K(m)_*(X^{\Phi(H)})^o \\
< (p-1)d \leq (p-1)\dim_{K(n)_*} K(n)_*(X^{\Phi(G)})^e+ \dim_{K(n)_*} K(n)_*(X^{\Phi(G)})^o,
\end{multline*}
\begin{multline*}\tag*{{\bf (b)}}
(p-1)\dim_{K(m)_*} K(m)_*(X^{\Phi(H)})^o+ \dim_{K(m)_*} K(m)_*(X^{\Phi(H)})^e \\
< (p-1)d \leq (p-1)\dim_{K(n)_*} K(n)_*(X^{\Phi(G)})^o+ \dim_{K(n)_*} K(n)_*(X^{\Phi(G)})^e.
\end{multline*}
If (a) holds, one lets $F = e_d X^{\sm (p-1)k(d)}$.  If (b) holds, one lets $F = e_d^{\prime} X^{\sm (p-1)k(d)}$.
\end{proof}

\begin{rem} It was an insight of Jeff Smith in the mid 1980's that the classic representation theory literature offered formulae for idempotents that would have properties like those in the propositions, and that one could make good use of these, with arguments similar to those here\footnote{His application was to construct an explicit finite complex whose mod $p$ cohomology as an $A$--module made it relatively easy to show it was type $n$, and also, using standard Adams spectral sequence methods, that it admitted a $v_n$--self map: see \cite{hs}.}. His work with idempotents appears in a manuscript \cite{jeff smith} dating from around 1990, but is, sadly, unpublished. However both background and the results needed for the applications in \cite{hs}, and also by us here, appear in \cite[Appendix C]{ravenel orange book}.
\end{rem}

\subsection{The special idempotents}

In this section we describe idempotents $e_d$ and $e^{\prime}_d$ with the properties listed in Propositions \ref{p=2 prop} and \ref{p odd prop}.

To give the reader a sense of how this might go, we first offer two examples, which describe the first interesting cases.

\begin{ex}  When $p$ is odd and $d=1$, one can define $e_1, e^{\prime}_1 \in \Z_{(p)}[\Sigma_{p-1}]$ as follows:
$$ e_1 = \frac{1}{(p-1)!} \sum_{\sigma \in \Sigma_{p-1}} [\sigma] \text{ and } e^{\prime}_1 = \frac{1}{(p-1)!} \sum_{\sigma \in \Sigma_{p-1}} \text{sgn}(\sigma)[\sigma].$$
It is not hard to see that $e_1 (V_*)^{\otimes p-1} \simeq \bigoplus_{i+j=p-1} \Gamma^i(V_*^e)\otimes \Lambda^j(V_*^o)$, where $\Gamma^i$ is the $i$th symmetric invariants functor, and $\Lambda^j$ is the $j$th exterior power functor. It follows that $e_1V_*^{\otimes p-1} = 0$ if and only if $V_*^e=0$ and $\dim_{\F_*}V_*^o < p-1$.

For $e_1^{\prime}$, the roles of $V^e_*$ and $V^o_*$ are reversed.
\end{ex}

\begin{ex}  When $p=2$, $e_2 \in \Z_{(2)}[\Sigma_3]$ is the first interesting case.  There are transpositions $s = (12)$ and $t = (13)$ in $\Sigma_3$. A formula for $e_2$ is then
$$ e_2 = \frac{1}{3}(1 + [s])(1 - [t]) = \frac{1}{3}(1 + [s] - [t] - [st]).$$
Then one checks that $e_2V_*^{\otimes 3} = 0$ if $V_*$ is one dimensional, while $e_2V_*^{\otimes 3}$ is two dimensional if $V_*$ is two dimensional, and still larger if $V_*$ is larger.
\end{ex}

Our special idempotents are all examples of primitive idempotents in $\Q[\Sigma_k]$ that happen to lie in the subring $\Z_{(p)}[\Sigma_k]$. Standard representation theory tells us that primitive idempotents in $\Q[\Sigma_k]$ correspond to the irreducible $\Q[\Sigma_k]$--modules\footnote{More precisely, we mean equivalence classes of primitive idempotents under conjugation by units in the group ring.}.

These have been much studied for a long time, and we say a little bit about how the idempotents are constructed.  We will be a bit informal here, since \cite[Appendix C.1]{ravenel orange book} has precise details.  A useful textbook on this subject is \cite{james kerber}.

Firstly, the idempotents are parametrized by partitions of $k$, where a {\em partition} $\lambda$ of $k$ consists of a finite nonincreasing sequence of natural numbers $\lambda = (\lambda_1, \dots, \lambda_r)$ such that $\lambda_1 + \dots + \lambda_r = k$.

These are often pictured with Young diagrams, as in the next example.
\begin{ex}  The partition $\lambda = (4,2)$ of 6 has {\em Young diagram}
$$
\begin{array}{|c|c|c|c|}
\hline
&&& \\ \cline{1-4}
&& \multicolumn{1}{c}{} & \multicolumn{1}{c}{} \\ \cline{1-2}
\end{array}.
$$
\end{ex}

Now one fills the boxes in the Young diagram with the numbers 1, \dots, $k$: this is a {\em tableau} $t$ for $\lambda$ \cite[p.28]{james kerber}.

\begin{ex}  The `standard' tableau $t$ for $\lambda = (4,2)$ is
$$
\begin{array}{|c|c|c|c|}
\hline
1&2&3&4 \\ \cline{1-4}
5&6& \multicolumn{1}{c}{} & \multicolumn{1}{c}{} \\ \cline{1-2}
\end{array}
.$$
\end{ex}

Associated to a tableau $t$, one has row and column stabilizer subgroups of $\Sigma_k$ denoted $R_t$ and $C_t$.

\begin{ex} With $t$ as in the last example, $R_t = \Sigma_{\{1,2,3,4\}} \times \Sigma_{\{5,6\}}$ and $C_t = \Sigma_{\{1,5\}} \times \Sigma_{\{2,6\}}$ are the subgroups of $\Sigma_6 = \Sigma_{\{1,2,3,4,5,6\}}$.
\end{ex}

Also associated to the partition $\lambda$ is an integer $h_{\lambda}$, which is the product of the {\em hook lengths} associated to the boxes making up the Young diagram, where the hook length of a box is 1 + the number of boxes to the right and below the box.

\begin{ex}  If $\lambda = (4,2)$, we have hook lengths
$$
\begin{array}{|c|c|c|c|}
\hline
5&4&2&1 \\ \cline{1-4}
2&1& \multicolumn{1}{c}{} & \multicolumn{1}{c}{} \\ \cline{1-2}
\end{array}
$$
so $h_{\lambda} = 5\cdot 4 \cdot 2 \cdot 1 \cdot 2 \cdot 1 = 80$.
\end{ex}

Finally, given a tableau $t$ for $\lambda$, one defines $e_{t} \in \Q[\Sigma_k]$ by the formula
\begin{equation}  e_t = \frac{1}{h_{\lambda}}\left(\sum_{\alpha \in R_t} [\alpha]\right)\left(\sum_{  \beta \in C_t} \text{sgn}(\beta)[\beta]\right) = \frac{1}{h_{\lambda}}\sum_{\substack{\alpha \in R_t \\ \beta \in C_t}} \text{sgn}(\beta)[\alpha \beta].
\end{equation}
This turns out to be a primitive idempotent -- combine \cite[Theorem 3.1.10]{james kerber} with \cite[Theorem 2.3.21]{james kerber} --  and two different choices of tableau for $\lambda$ give conjugate idempotents.

Note that if a prime $p$ does not divide any of the hook lengths of $\lambda$, then $e_t \in \Z_{(p)}[\Sigma_k]$: the partition $\lambda$ is then called a {\em $p$--core} \cite[p.76]{james kerber}.

\begin{ex}  The partition $\lambda = (4,2)$ of 6 is a 3-core.
\end{ex}

There is one more construction we want to use that is not mentioned in \cite{ravenel orange book}: if $\lambda$ is a partition of $k$, there is an associated partition $\lambda^{\prime}$ obtained by interchanging the rows and columns in the Young diagram \cite[p.22]{james kerber}, and a tableau $t$ for $\lambda$ determines a tableau $t^{\prime}$ of $\lambda^{\prime}$.  Note that if $\lambda$ is a $p$--core, so is $\lambda^{\prime}$.

\begin{ex} If $\lambda = (4,2)$, with tableau
$ t =
\begin{array}{|c|c|c|c|}
\hline
1&2&3&4 \\ \cline{1-4}
5&6& \multicolumn{1}{c}{} & \multicolumn{1}{c}{} \\ \cline{1-2}
\end{array}
$,
then $\lambda^{\prime} = (2,2,1,1)$ and
$t^{\prime} =
\begin{array}{|c|c|}
\hline
1&5 \\ \cline{1-2}
2&6 \\ \cline{1-2}
3&\multicolumn{1}{c}{} \\ \cline{1-1}
4&\multicolumn{1}{c}{} \\ \cline{1-1}
\end{array}
$.
\end{ex}

From the construction, it is not hard to see that, if $V_*$ is a graded vector space over a field of characteristic 0, or $p$ if $\lambda$ is a $p$--core, there are isomorphisms of graded vector spaces
\begin{equation} \label{suspensions isomorphism} \Sigma^ke_{t^{\prime}}(V_*^{\otimes k}) \simeq e_t(\Sigma V_*^{\otimes k}).
\end{equation}
Here $\Sigma V_*$ denotes the graded vector space $V_*$ with a shift up by one in grading, so the even dimensional part of $V_*$ becomes the odd dimensional part of $\Sigma V_*$ and vice versa.

The idempotents in Propositions \ref{p=2 prop} and \ref{p odd prop} are all of the sort just described.

Given a prime $p$, let $\lambda_{p,d}$ be the partition of $(p-1)\binom{d+1}{2}$ given by
$$\lambda_{p,d} = ((p-1)d,(p-1)(d-1), \dots,(p-1)).$$
These are $p$--cores\footnote{Indeed, when $p=2$, these partitions are the only 2-cores.}: see \cite[Lemma C.1.4]{ravenel orange book}.  For all primes $p$, one now lets $e_d \in \Z_{(p)}[\Sigma_{(p-1)\binom{d+1}{2}}]$ be the idempotent $e_{t_{p,d}}$ where $t_{p,d}$ is the standard tableau for the partition $\lambda_{p,d}$.

At odd primes, we let $e_d^{\prime} = e_{t_d^{\prime}}$.

\cite[Thm.C.2.1]{ravenel orange book} says that, for all graded fields $\F_*$ of characteristic p, and all finite dimensional graded $\F_*$--vector spaces $V_*$,
\begin{equation} \label{prop equation} e_d V_*^{\otimes (p-1)k(d)} \neq 0 \Leftrightarrow (p-1)\dim_{\F_*} V_*^e + \dim_{\F_*} V_*^o \geq (p-1)d.
\end{equation}
This is given a careful proof in this reference.  Note how if $p=2$, this simplifies to the statement in \propref{p=2 prop}.  This $p=2$ statement can also be deduced directly from \cite[Corollary 8.1.17]{james kerber}.

When $p$ is odd, (\ref{suspensions isomorphism}) and (\ref{prop equation}) then combine to tell us that for all graded fields $\F_*$ of characteristic p, and all finite dimensional graded $\F_*$--vector spaces $V_*$,
\begin{equation} \label{prime equation} e_d^{\prime} V_*^{\otimes (p-1)k(d)} \neq 0 \Leftrightarrow (p-1)\dim_{\F_*} V_*^o + \dim_{\F_*} V_*^e \geq (p-1)d.
\end{equation}

It remains to explain why (\ref{prop equation}) and (\ref{prime equation}) also hold when the field $\F_*$ has characteristic 0, rather than $p$.

This follows immediately from the next lemma, noting that if $\F_*$ is a graded field of characteristic either 0 or $p$, then any graded $\F_*$--vector space $V_*$ can be written in the form $V_* = \F_* \otimes_{\Z_{(p)}} A_*$, where $A_*$ is a graded $\Z_{(p)}$--module, free in each degree.

\begin{lem} Let $A_*$ be a graded $\Z_{(p)}$--module, free in each degree, and let $K_*$ and $k_*$  respectively be graded fields of characteristic 0 and $p$.  If $e \in \Z_{(p)}[\Sigma_k]$ is any idempotent, the dimensions (over $K_*$) of the even and odd parts of $e(K_* \otimes_{\Z_{(p)}} A_*)^{\otimes k}$ will equal the dimensions (over $k_*$) of the even and odd parts of $e(k_* \otimes_{\Z_{(p)}} A_*)^{\otimes k}$.
\end{lem}
\begin{proof}
There are isomorphisms
$$ e(K_* \otimes_{\Z_{(p)}} A_*)^{\otimes k} \simeq K_* \otimes_{\Z_{(p)}} (eA_*^{\otimes k}),$$
and
$$e(k_* \otimes_{\Z_{(p)}} A_*)^{\otimes k} \simeq k_* \otimes_{\Z_{(p)}} (eA_*^{\otimes k}).$$
As $eA_*^{\otimes k}$ is a direct summand of $A_*^{\otimes k}$, and thus a free $\Z_{(p)}$--module, the dimensions for the even and odd parts of both  $e(K_* \otimes_{\Z_{(p)}} A_*)^{\otimes k}$ and $e(k_* \otimes_{\Z_{(p)}} A_*)^{\otimes k}$ agree with the ranks, over $\Z_{(p)}$, of the even and odd parts of $eA_*^{\otimes k}$.
\end{proof}

\begin{rem}  This elementary lemma has a curious consequence: if $V_*$ is a finite dimensional vector space over a graded field $\F_*$ of characteristic 0, and $e \in \Z_{(2)}[\Sigma_k]$ is any idempotent, then the total dimension of $e V_*^{\otimes k}$ is independent of the grading of $V_*$.  This is clearly true if $\F_*$ were of characteristic 2, and the lemma allows us to transfer this observation to characteristic 0.
\end{rem}

\section{Lower bounds for $r_n(G,H)$ using representation theory} \label{examples section}

In this section, we give the background needed to use our lens space and projective space constructions, and give the details of \exref{el ab example} (which implies that $r \leq r_n(C_p^r, \{e\})$), and then our more delicate \thmref{general lower bound theorem}.

\subsection{The fixed points of $L_p(\omega)$ and $\RP(\omega)$}

As in the introduction, if $\omega$ is a unitary representation of a finite group $G$, we let $L_p(\omega) = S(\omega)/C_p$, where $S(\omega)$ is the unit sphere in $\omega$, and $C_p \subset U(1) \subset \C^{\times}$ is the group of $p$th roots of unity.  Thus, if $\omega$ has complex dimension $d$, then $L_p(\omega) = L_p(\C^d)$ is a $(2d-1)$--dimensional lens space with an induced action of $G$.

\begin{lem} Given $x \in S(\omega)$, $[x] \in L_p(\omega)$ is fixed by $G$ if and only if $x$ spans a 1--dimensional sub-representation of $\omega$ which factors through $G/\Phi(G)$.
\end{lem}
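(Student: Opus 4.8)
The plan is to unwind the definition of the $G$--action on $L_p(\omega) = S(\omega)/C_p$ and read off the condition one point at a time. The $G$--action and the scalar $S^1$--action on $S(\omega)$ commute, so $G$ acts on the quotient by $g[w] = [gw]$. Fix $w \in S(\omega)$. Then $[w]$ is fixed by $G$ exactly when $[gw] = [w]$ in $S(\omega)/C_p$ for every $g \in G$, which by definition of the quotient means precisely that for every $g$ there is a $p$th root of unity $\zeta_g \in C_p$ with $gw = \zeta_g w$.

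First I would observe that such a family $(\zeta_g)_{g\in G}$ forces $\C w$ to be a $G$--subrepresentation: each $g$ carries the line $\C w$ into itself, acting there by the scalar $\zeta_g$. Since scalar multiplications commute with the $G$--action, $\zeta_{gh}w = (gh)w = g(\zeta_h w) = \zeta_g\zeta_h w$, so $g \mapsto \zeta_g$ is a homomorphism $\chi\colon G \to C_p \subseteq \C^{\times}$, and $G$ acts on $\C w$ through $\chi$. Conversely, if $\C w$ is a one--dimensional $G$--subrepresentation on which $G$ acts through a character $\chi$ whose values lie in $C_p$, then $gw = \chi(g)w$ with $\chi(g) \in C_p$, so $[gw] = [w]$ and $[w]$ is $G$--fixed. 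Hence the statement reduces to: $[w]$ is $G$--fixed if and only if $\C w$ is a one--dimensional $G$--subrepresentation whose character takes values in $C_p$.

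It then remains to match ``character valued in $C_p$'' with ``character factoring through $G/\Phi(G)$''. Since $G$ is a $p$--group, $G/\Phi(G)$ is elementary abelian of exponent $p$, so every character of $G/\Phi(G)$ has image in the group $C_p$ of $p$th roots of unity; this gives one direction. For the other, if $\chi\colon G \to C_p$ is a homomorphism, then $\ker\chi$ has index $1$ or $p$ in $G$; a subgroup of prime index is maximal, hence contains $\Phi(G)$, so in either case $\ker\chi \supseteq \Phi(G)$ and $\chi$ factors through $G/\Phi(G)$. Assembling this with the previous paragraph completes the proof.

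There is no genuinely hard step here; the only point to keep in mind is that $L_p(\omega)$ divides $S(\omega)$ by the finite group $C_p$ rather than by all of $S^1$, which is exactly what pins the relevant characters down to the $p$--torsion characters of $G$, that is, the ones factoring through $G/\Phi(G)$. In the applications one would push this further by decomposing $\omega$ into isotypical components to describe $L_p(\omega)^G$ explicitly, but that refinement is not needed for the lemma itself.
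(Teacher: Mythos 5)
Your proof is correct and follows essentially the same route as the paper's (one-line) argument: a $G$-fixed class forces $gw=\lambda(g)w$ for a character $\lambda\colon G\to C_p$, and any such character kills $\Phi(G)$ since $G/\Phi(G)$ is the maximal elementary abelian quotient. Your extra verifications (that $g\mapsto\zeta_g$ is a homomorphism, and that index-$p$ kernels contain the Frattini subgroup) just spell out what the paper leaves implicit.
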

\begin{proof}  Given $x \in S(\omega)$,  $[w] \in L_p(\omega)$ is fixed by $G$ if and only if we can define a character $\lambda: G \ra C_p \subset \C^{\times}$ by $gx=\lambda(g)x$, and such a character will factor through $G/\Phi(G)$.
\end{proof}

To use this lemma to describe $L_p(\omega)^G$ as a subspace of $L_p(\omega)$, we need to recall that $\omega$ admits a canonical decomposition into its isotypical components: $\displaystyle \omega \simeq \bigoplus_{i} \omega_i$,
with the sum running over an indexing set for the simple $\C[G]$--modules $\lambda_i$.  The summand $\omega_i$ is the span of all submodules isomorphic to $\lambda_i$.

Note that each inclusion $\omega_i \subset \omega$ induces a subspace inclusion $L_p(\omega_i) \subset L_p(\omega)$, and that these various subspaces are disjoint. The lemma thus has the following formula as a corollary.

\begin{prop} \label{lens space fixed point formula} $\displaystyle L_p(\omega)^G = \coprod_{i} L_p(\omega_i)$, with the disjoint union running over the 1--dimensional complex representations of $G$ which factor through $G/\Phi(G)$.
\end{prop}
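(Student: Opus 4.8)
The plan is to deduce the formula directly from the preceding lemma together with standard facts about the isotypical decomposition $\omega \simeq \bigoplus_i \omega_i$, where $\omega_i = \Hom_{\C[G]}(\lambda_i,\omega)\otimes_{\C}\lambda_i$.

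The one observation to isolate is this: if $\lambda_i$ is a one-dimensional representation of $G$, then a nonzero vector $w \in \omega$ spans a subrepresentation $\C w \subseteq \omega$ isomorphic to $\lambda_i$ if and only if $w \in \omega_i$. The ``if'' direction is immediate, since $\omega_i$ contains every subrepresentation of $\omega$ isomorphic to $\lambda_i$; for ``only if'', the subrepresentation $\C w$, being isomorphic to $\lambda_i$, is contained in $\omega_i$ by maximality of the isotypical component, so $w \in \omega_i$. Note also that the pieces $\omega_i$ meet only in $0$, since $\omega = \bigoplus_i \omega_i$ is an internal direct sum.

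Next I would combine this with the lemma. By the lemma, $[w] \in L_p(\omega)^G$ exactly when $\C w$ is a one-dimensional subrepresentation of $\omega$ isomorphic to some character $\lambda_i$ which factors through $G/\Phi(G)$. By the observation above, the set of such $w$ lying in $S(\omega)$ is $\bigcup_i \bigl(S(\omega)\cap\omega_i\bigr) = \bigcup_i S(\omega_i)$, the union ranging over precisely the one-dimensional $\lambda_i$ that factor through $G/\Phi(G)$. Since $C_p \subseteq U(1)$ acts on $S(\omega)$ by scalars it preserves each $\omega_i$, so passing to the quotient gives
$$ L_p(\omega)^G = \coprod_i S(\omega_i)/C_p = \coprod_i L_p(\omega_i), $$
where the union is a genuine topological disjoint union because the finitely many $\omega_i$ are linear (hence closed) subspaces of $\omega$, pairwise disjoint on the unit sphere.

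There is no real obstacle here; the two points worth a sentence are that a character $\lambda_i$ whose image is strictly larger than $C_p$ --- equivalently, one that does \emph{not} factor through $G/\Phi(G)$ --- contributes no $G$-fixed points, which is exactly why the index set in the statement is restricted, and that $L_p(\omega_i)$, defined intrinsically as $S(\omega_i)/C_p$, is identified with a closed subspace of $L_p(\omega)$ simply because the scalar $C_p$-action on $\omega$ restricts to the scalar $C_p$-action on $\omega_i$.
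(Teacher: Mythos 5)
Your proposal is correct and is essentially the paper's argument: the paper simply records the proposition as an immediate corollary of the preceding lemma, and your write-up just supplies the routine details (characters into $C_p$ are exactly those factoring through $G/\Phi(G)$, a vector spans a copy of a one-dimensional $\lambda_i$ iff it lies in the isotypical piece $\omega_i$, and the $C_p$-scalar action preserves each $\omega_i$, so the fixed set descends to $\coprod_i L_p(\omega_i)$).
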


Similarly, if $\omega$ is a real representation of a finite group $G$, we let $\RP(\omega)$ be the associated projective space: the $G$--space of real lines in $\omega$.  So if $\omega$ is $d$--dimensional, then $\RP(\omega) = \RP(\R^d)$ is a $(d-1)$--dimensional real projective space with an induced action of $G$.

\begin{lem} A point $L \in \RP(\omega)$ is fixed by $G$ if and only if, when viewed as a line in $\omega$, $L$ is a one dimensional sub-representation.
\end{lem}

Again one has an isotypical decomposition:
$\displaystyle \omega \simeq \bigoplus_{i} \omega_{i}$,
with the sum running over an indexing set for the simple $\R[G]$--modules $\lambda_i$.

The lemma thus has the following as a corollary.

\begin{prop} \label{projective space fixed point formula}$\displaystyle \RP(\omega)^G = \coprod_{i} \RP(\omega_i)$, with the disjoint union running over the 1--dimensional real representations of $G$.
\end{prop}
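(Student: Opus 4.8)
The plan is to prove \propref{projective space fixed point formula} by the same elementary argument used for the lens space version in \propref{lens space fixed point formula}, adapted to real lines rather than complex lines modulo $C_p$. First I would reduce to understanding a single fixed point: a point of $\RP(\omega)$ is a real line $\ell \subset \omega$, and it is fixed by $G$ precisely when $g\ell = \ell$ for all $g \in G$, i.e.\ when $\ell$ is a $G$-invariant line. Since $\dim_{\R}\ell = 1$, the action of $G$ on $\ell$ is by a homomorphism $G \ra \{\pm 1\} = O(1)$, so $\ell$ is a one--dimensional real sub-representation of $\omega$ on which $G$ acts through a one--dimensional real character $\lambda: G \ra \{\pm 1\}$. (Note that, unlike the lens space case, no Frattini-quotient condition is needed here because every homomorphism to $\Z/2$ automatically kills the Frattini subgroup anyway; more to the point, the target $O(1)$ is forced, so one simply gets all one--dimensional real characters.)

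Next I would package this into the isotypical decomposition $\omega \simeq \bigoplus_i \omega_i$ over the simple $\R[G]$--modules $\lambda_i$, as recalled just before the statement. A $G$-invariant line $\ell$, carrying a one--dimensional real character $\lambda_i$, must lie entirely inside the isotypical summand $\omega_i$ corresponding to that $\lambda_i$: indeed $\ell \cong \lambda_i$ as $\R[G]$--modules, so $\ell \subseteq \Hom_{\R[G]}(\lambda_i,\omega)\otimes_{\R}\lambda_i = \omega_i$ by definition of the isotypical component (here $\F_i = \R$ since $\lambda_i$ is one--dimensional real). Conversely every real line inside such an $\omega_i$ is $G$-invariant, since $G$ acts on all of $\omega_i$ through the scalar character $\lambda_i$. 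Therefore $\RP(\omega)^G = \coprod_i \RP(\omega_i)$, the disjoint union over those $i$ with $\lambda_i$ one--dimensional real — and the summands for higher--dimensional irreducibles contribute nothing, since a simple $\R[G]$--module of $\R$-dimension $\geq 2$ has no $G$-invariant line (an invariant line would be a one-dimensional subrepresentation, contradicting simplicity unless the module itself is one-dimensional). The union is disjoint in $\RP(\omega)$ because the $\omega_i$ are linearly independent subspaces, so no real line can be simultaneously contained in two of them.

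The argument is entirely formal once the isotypical decomposition is in hand, so there is no serious obstacle; the only point requiring a word of care is the claim that an invariant line is contained in a single isotypical summand rather than spread across several, which is exactly the statement that $\Hom_{\R[G]}(\lambda_i, \omega_j) = 0$ for $i \neq j$ — a standard consequence of Schur's lemma over $\R$. I would state this in one sentence and then assemble the conclusion. The proof is short enough that, following the pattern of \propref{lens space fixed point formula}, it can be given as a brief paragraph deducing it from the (already-stated) description of invariant real lines, without a separate preliminary lemma.
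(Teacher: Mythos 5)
Your argument is correct and is essentially the paper's own: the paper proves the lens space case via the observation that a fixed point corresponds to a $1$--dimensional invariant subspace carrying a character (there landing in $C_p$, here forced into $O(1)=\{\pm 1\}$), and then simply remarks that the projective space case follows ``just as before'' via the isotypical decomposition, which is exactly the reduction you carry out.
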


As in   \secref{general lower bound subsection}, given a finite 2--group $G$, we let $e_G \in \R[G]$ be the central idempotent
$$ e_G = \frac{1}{|\Phi(G)|}\sum_{g \in \Phi(G)} g.$$
One easily checks that $\Phi(G)$ acts trivially on the image of $e_G$, and that $e_G$ acts as the identity on any $\R[G]$--module on which $\Phi(G)$ acts trivially. It follows that if $\omega$ is a real representation of $G$, then the image of $e_G$ is the maximal direct summand of $\omega$ on which $\Phi(G)$ acts trivially, so can be viewed as a real representation of $G/\Phi(G)$ pulled back to $G$.

But this maximal direct summand is precisely $\displaystyle \bigoplus_i \omega_i$,
with the sum over the 1--dimensional real representations of $G$, as in the last proposition, and so we have the next corollary.

\begin{cor} \label{fixed point cor} The summand inclusion $e_G\omega \hookrightarrow \omega$ induces a homeomorphism $$\RP(e_G\omega)^G = \RP(\omega)^G.$$
\end{cor}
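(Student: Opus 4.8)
The plan is to deduce this directly from \propref{projective space fixed point formula} once we pin down exactly which isotypical summands of $\omega$ can contribute to the $G$--fixed set. The first step is the elementary representation--theoretic observation that every one--dimensional real representation $\lambda$ of the $2$--group $G$ factors through $G/\Phi(G)$: such a $\lambda$ is a homomorphism $G \ra GL_1(\R) = \R^\times$ whose image is a finite subgroup, hence lies in $\{\pm 1\}$, so $\lambda$ is a homomorphism from $G$ to an elementary abelian $2$--group and therefore kills $\Phi(G) = [G,G]G^2$. Equivalently, $\Phi(G)$ acts trivially on the corresponding isotypical summand $\omega_\lambda$, so $\omega_\lambda \subseteq e_G\omega$. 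As noted just before the statement, $e_G\omega$ is in fact precisely the direct sum of these one--dimensional summands, so the isotypical decompositions of $\omega$ and of $e_G\omega$ agree on every one--dimensional real constituent.

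Next I would apply \propref{projective space fixed point formula} twice. For $\omega$ it gives $\RP(\omega)^G = \coprod_\lambda \RP(\omega_\lambda)$, the coproduct over the one--dimensional real representations $\lambda$ of $G$. Applied to the $G$--representation $e_G\omega$ (whose irreducible real constituents are all one--dimensional, being pulled back from the elementary abelian $2$--group $G/\Phi(G)$) it gives $\RP(e_G\omega)^G = \coprod_\lambda \RP((e_G\omega)_\lambda)$, again over the one--dimensional real representations of $G$, and $(e_G\omega)_\lambda = \omega_\lambda$ for each such $\lambda$ by the first step. Thus the two coproducts coincide, and the resulting bijection is induced by the summand inclusion $e_G\omega \hra \omega$.

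To make the homeomorphism assertion precise rather than merely a bijection, I would argue inside the single compact Hausdorff space $\RP(\omega)$. The summand inclusion $e_G\omega \hra \omega$ induces a closed $G$--embedding $\RP(e_G\omega) \hra \RP(\omega)$, and, since $\RP(e_G\omega)$ is $G$--stable, $\RP(e_G\omega)^G = \RP(e_G\omega) \cap \RP(\omega)^G$ as a subspace of $\RP(\omega)$. So it suffices to check the reverse containment $\RP(\omega)^G \subseteq \RP(e_G\omega)$: a $G$--fixed point of $\RP(\omega)$ is a line $\ell$ on which $G$ acts through a one--dimensional real character $\lambda$, so $\ell \subseteq \omega_\lambda \subseteq e_G\omega$ by the first step. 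Hence $\RP(\omega)^G = \RP(e_G\omega)^G$ literally as subsets of $\RP(\omega)$ with the subspace topology, which is the claimed homeomorphism.

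There is no genuinely hard step here; the corollary is essentially immediate once \propref{projective space fixed point formula} is in hand. The only point that requires care is the input fact that for a $2$--group the one--dimensional real representations are exactly those inflated along $G \ora G/\Phi(G)$ — this is what makes $e_G$ the correct truncation — together with being honest about passing from a bijection of fixed sets to a homeomorphism, which is handled by the observation that both spaces sit as subspaces of $\RP(\omega)$.
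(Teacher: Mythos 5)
Your proposal is correct and follows essentially the same route as the paper: identify $e_G\omega$ with the sum of the isotypical components for the one--dimensional real representations (which, for a $2$--group, are exactly the characters factoring through $G/\Phi(G)$) and then apply \propref{projective space fixed point formula}. Your extra care in upgrading the resulting bijection to a homeomorphism by working inside $\RP(\omega)$ is a fine touch, but the substance matches the paper's argument.
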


\subsection{The Morava $K$--theory of $L_p(\mathbb C^d)$ and $\RP(\mathbb R^d)$}

We will use the following familiar calculations.

\begin{prop} \label{RP prop} When $p=2$, one has
\begin{equation*}
k_n(\RP(\mathbb R^d)) =
\begin{cases}
d & \text{if } d \leq 2^{n+1} \\ 2^{n+1} & \text{if } d \text{ is even and } d \geq 2^{n+1} \\ 2^{n+1}-1 & \text{if } d \text{ is odd and } d > 2^{n+1}.
\end{cases}
\end{equation*}
\end{prop}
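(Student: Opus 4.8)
The plan is to compute $K(n)_*(\RP(\R^d))$ by combining the Atiyah--Hirzebruch spectral sequence with the well-understood action of the Morava $K$-theory operations on $\RP^\infty$. The starting point is the classical fact that $K(n)^*(\RP^\infty) = K(n)^*(BC_2)$ is a free $K(n)_*$-module of rank $2^n$, concentrated in even degrees, with an explicit presentation $K(n)^*[[x]]/([2](x))$ where $x$ is the Euler class of the tautological real line bundle and $[2](x)$ is the $2$-series of the associated formal group law, which for Morava $K$-theory has the form (up to units) $x^{2^n}$. So $K(n)^*(\RP^\infty)$ has a $K(n)_*$-basis $1, x, x^2, \dots, x^{2^n-1}$.

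The next step is to pass to the finite skeleta. Since $\RP(\R^d) = \RP^{d-1}$ sits inside $\RP^\infty$, one uses the AHSS for $K(n)_*(\RP^{d-1})$, whose $E_2$-page is $H_*(\RP^{d-1};\Z/2)$ (here we can work mod 2 since $K(n)_* = \Z/2[v_n^{\pm1}]$ and all cells are attached mod 2). The only possible differentials are determined by compatibility with the $\RP^\infty$ computation: the first nontrivial differential is $d_{2^{n+1}-1}$, which is dual to the action of the relevant $Q$-type operation / is forced by the relation $x^{2^n} = 0$ in the infinite case. Concretely, in the cohomology AHSS for $\RP^\infty$ the differential $d_{2^{n+1}-1}$ is zero (the spectral sequence collapses because the rank is exactly $2^n$ and accounted for by even classes), but for the truncated space $\RP^{d-1}$ the truncation introduces a differential pairing the class in degree $2^{n+1}-1$ region with the top. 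The cleanest way to organize this is: $\widetilde{K(n)}^*(\RP^{d-1})$ fits in a cofiber sequence relating $\RP^{d-1}$, $\RP^\infty$, and $\RP^\infty/\RP^{d-1}$, and the latter is $(d-1)$-connected with bottom cell in dimension $d$; chasing the long exact sequence and using the periodicity $x^{2^n}=0$ gives the rank.

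I would then do the bookkeeping in the three cases. When $d \le 2^{n+1}$, all cells of $\RP^{d-1}$ survive (there is no room for the length-$(2^{n+1}-1)$ differential), so $k_n = \dim_{\Z/2} H_*(\RP^{d-1};\Z/2) = d$. When $d > 2^{n+1}$, the differential $d_{2^{n+1}-1}$ acts, killing cells in pairs $(i, i+2^{n+1}-1)$; the count of surviving classes stabilizes at $2^{n+1}$ (matching $2 \cdot 2^n$, i.e. $2^n$ even and $2^n$ odd classes, the rank of $K(n)_*(BC_2 \times C_2)$-style count), except that when $d-1$ is even (so $d$ odd) the very top cell in dimension $d-1$ is even-dimensional and has no odd partner above it within the skeleton to cancel against in the pattern, reducing the count by $1$ to $2^{n+1}-1$. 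Equivalently: one checks directly that $k_n(\RP^{d-1})$ for $d-1 \ge 2^{n+1}$ equals $2^{n+1}$ if $d-1$ is odd and $2^{n+1}-1$ if $d-1$ is even, which is exactly the stated formula after translating between $d$ and $d-1$.

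The main obstacle is pinning down precisely which differential fires and that there are no later differentials or extension ambiguities affecting the dimension count — i.e. verifying that the AHSS for $K(n)_*(\RP^{d-1})$ degenerates after the single family of $d_{2^{n+1}-1}$ differentials. This is really the assertion that the $2$-series of the Morava $K(n)$ formal group law has leading term $x^{2^n}$ and nothing lower, together with naturality of the AHSS under the inclusion $\RP^{d-1} \hookrightarrow \RP^\infty$; since $K(n)^*(\RP^\infty)$ is even and free of rank $2^n$, the comparison forces the differential pattern on the finite skeleta. I expect to cite this as ``well known'' (it appears in work of Ravenel--Wilson--Yagita, Kono--Yagita, and others on Morava $K$-theory of classifying spaces and projective spaces) rather than reprove it, so the actual content of the proof here is the elementary case-by-case counting of surviving basis elements.
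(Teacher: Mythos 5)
Your overall strategy (Atiyah--Hirzebruch spectral sequence plus a count of survivors of the first differential) is the same as the paper's, but two of the steps you describe are wrong as stated. First, the claim that ``in the cohomology AHSS for $\RP^\infty$ the differential $d_{2^{n+1}-1}$ is zero (the spectral sequence collapses)'' is false: the $E_2$-term has a free $K(n)^*$-summand in every cohomological degree, while the abutment has rank $2^n$, so that differential must be (and is) nonzero -- it is $d_{2^{n+1}-1}(z)=v_nQ_n(z)$, sending $x^r\mapsto v_n x^{r+2^{n+1}-1}$ for $r$ odd. Consequently the truncation does not ``introduce'' a differential; by naturality the same $Q_n$-differential acts for $\RP^{d-1}$, and truncation merely deletes targets. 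This matters because your parity bookkeeping inherits the confusion: the differential goes from odd-degree classes to even-degree classes $2^{n+1}-1$ higher, so for $d$ odd the top class (degree $d-1$, even) is in fact killed, being hit from degree $d-2^{n+1}$; and in any case a class with no partner survives -- it cannot ``reduce the count.'' The correct count is: the even classes in degrees $0,2,\dots,2^{n+1}-2$ survive ($2^n$ of them), all higher even classes are hit, and the surviving odd classes are those in degrees $r$ with $d-2^{n+1}<r\le d-1$, of which there are $2^n$ when $d$ is even and $2^n-1$ when $d$ is odd; this yields the three cases of the proposition.

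Second, the ``cleanest way'' you propose -- the cofiber sequence $\RP^{d-1}\to\RP^\infty\to\RP^\infty/\RP^{d-1}$ -- is not yet a proof: to chase the long exact sequence you must identify $K(n)^*(\RP^\infty/\RP^{d-1})$ and the map to $K(n)^*(\RP^\infty)$, and connectivity of the bottom cell plus $x^{2^n}=0$ alone do not determine the ranks. For $d$ even this can be completed via the Thom isomorphism, since $\RP^\infty/\RP^{d-1}$ is the Thom space of $\tfrac d2\,\lambda_{\C}$ and the map is multiplication by $x^{d/2}$ (zero exactly when $d\ge 2^{n+1}$), after which a rank count in the long exact sequence gives the answer; for $d$ odd the stunted space is not a complex Thom space and a separate comparison is needed. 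Finally, even in the AHSS route one still needs the collapse at $E_{2^{n+1}}$: naturality handles the low-degree even classes (the paper uses the composite $\RP^{d-1}\hookrightarrow\RP^\infty\to\C\PP^\infty$ to see they are permanent cycles), but for the surviving odd classes one should note they sit in degrees greater than $d-2^{n+1}$, so any later differential out of them would land above the top dimension. With these corrections your outline becomes essentially the paper's argument; citing the known computation (Kono--Yagita et al.) would also be acceptable, but the mechanism as you wrote it would not stand on its own.
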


\begin{prop} \label{Lens space prop}  For all primes $p$, one has
\begin{equation*}
k_n(L_p(\mathbb C^d)) =
\begin{cases}
2d & \text{if } d \leq p^n \\ 2p^n & \text{if } d \geq p^n.
\end{cases}
\end{equation*}
\end{prop}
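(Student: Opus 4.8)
The plan is to compute $K(n)_*(L_p(\mathbb C^d))$ by combining the standard cell structure on the lens space $L_p(\mathbb C^d)$ with knowledge of $K(n)^*(BC_p)$. Recall that $L_p(\mathbb C^d) = S^{2d-1}/C_p$ has a CW structure with one cell in each dimension $0,1,\dots,2d-1$, and that its skeleta interpolate between the skeleta of $BC_p = L_p(\mathbb C^\infty)$. Equivalently, there is a cofiber sequence relating $L_p(\mathbb C^d)$ to the $(2d-1)$--skeleton of $BC_p$, and a cofiber sequence $L_p(\mathbb C^d) \to BC_p \to \Sigma^{2d}(\text{something related to } BC_p)$ coming from the Thom space description: $BC_p/L_p(\mathbb C^d)$ is the Thom space of $d$ copies of the canonical line bundle, i.e. a shifted stunted lens space.

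First I would recall the classical computation (originally due to essentially everyone who has worked with Morava $K$--theory of $BC_p$): $K(n)^*(BC_p)$ is a free $K(n)_*$--module of rank $p^n$, with $\widetilde{K(n)}^*(BC_p) = K(n)_*[x]/(x^{p^n})$ where $|x| = 2$ and $x$ is the Euler class of the canonical complex line bundle $\eta$ — this follows from the fact that $K(n)$ has a formal group law of height $n$, so $[p](x)$ is a unit times $x^{p^n}$. Next I would use the Gysin/cofiber sequence for the sphere bundle $S(\eta^{\oplus d}) = L_p(\mathbb C^d) \to BC_p \xrightarrow{e(\eta^{\oplus d})} BC_p$, where $e(\eta^{\oplus d}) = x^d$ (using the Kunneth/multiplicativity of Euler classes, valid since $K(n)_*$ is a field so there are no lim$^1$ issues and the relevant Atiyah–Hirzebruch spectral sequences are well-behaved). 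This gives a long exact sequence $\cdots \to K(n)^*(BC_p) \xrightarrow{\cdot x^d} K(n)^*(BC_p) \to K(n)^*(L_p(\mathbb C^d)) \to \cdots$. Since multiplication by $x^d$ on $K(n)_*[x]/(x^{p^n})$ has kernel and cokernel each of $K(n)_*$--dimension $\min(d, p^n)$ (the kernel is spanned by $x^{p^n - d}, \dots, x^{p^n-1}$ when $d \le p^n$, and is everything when $d \ge p^n$; similarly for the cokernel), the long exact sequence degenerates and gives $\dim_{K(n)_*} K(n)^*(L_p(\mathbb C^d)) = 2\min(d,p^n)$. Dualizing (the duality map is an isomorphism since $K(n)_*$ is a graded field, as recalled in the Background section) yields the same count for $K(n)_*$, which is exactly $k_n(L_p(\mathbb C^d))$.

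The main obstacle I anticipate is bookkeeping around the prime $2$: one must be slightly careful that the Gysin sequence argument only needs the Kunneth isomorphism and the existence of Euler classes for complex (hence $K(n)$--orientable) bundles, both of which hold at $p=2$ without the exotic-commutativity wrinkle intervening, since we only ever multiply by the single even-degree class $x^d$ and never invoke commutativity of a smash of odd classes. A cleaner alternative, which I would mention as a cross-check, is to induct on $d$ using the cofiber sequence $S^{2d-1} \to L_p(\mathbb C^d) \to L_p(\mathbb C^{d-1})$ (collapsing the top cell pair $\Sigma^{2d-2} C_{p+}$): when $d \le p^n$ the Atiyah–Hirzebruch spectral sequence for $L_p(\mathbb C^d)$ collapses — all attaching maps are detected by the order-$p$ element and the first possible differential lands in degree $2p^n-1$ which is $\ge 2d-1$, so everything survives and $k_n = 2d$; once $d = p^n$ the module has stabilized to $K(n)^*(BC_p)$ and adding further cells contributes nothing new to the Morava $K$--theory (the top cells become $K(n)$--acyclic in the relevant range), giving the constant value $2p^n$ for $d \ge p^n$. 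Either route reduces the proposition to the height-$n$ formal group law computation of $K(n)^*(BC_p)$, which is standard and referenced via \cite{Wurgler 91}.
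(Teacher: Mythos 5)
Your Gysin-sequence argument is correct, but it is a genuinely different route from the paper's. The paper computes $k_n(L_p(\C^d))$ directly from the Atiyah--Hirzebruch spectral sequence: starting from $H^*(L_p(\C^d);\Z/p)=\Lambda^*(x)\otimes\Z/p[y]/(y^d)$ with $Q_n(x)=y^{p^n}$, the first differential $d_{2p^n-1}=v_nQ_n$ is computed explicitly, and the even-dimensional survivors are shown to be permanent cycles because they are pulled back from $\C \PP^\infty$; this runs in parallel with the $\RP(\R^d)$ computation (\propref{RP prop}), which is needed anyway for the $2$--primary examples and in fact subsumes the lens space case at $p=2$. You instead identify $L_p(\C^d)$ with the sphere bundle $S(d\eta)$ over $BC_p$, feed the standard height-$n$ computation $K(n)^*(BC_p)\cong K(n)_*[x]/(x^{p^n})$ into the Gysin sequence, and read off $\dim \ker + \dim \operatorname{coker}$ of multiplication by $x^d$, which gives $2\min(d,p^n)$ uniformly in $d$ with no differentials to analyze; your handling of the $p=2$ noncommutativity wrinkle is also fine, since only even-degree classes and the complex orientation are used. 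What your route buys is uniformity and a clean reduction to the formal group law; what the paper's route buys is self-containedness alongside the real projective case and the explicit $Q_n$-level information it reuses elsewhere. Two small blemishes in your write-up, neither fatal: the unreduced group is $K(n)^*(BC_p)\cong K(n)_*[x]/(x^{p^n})$ (the reduced part is the ideal $(x)$, of rank $p^n-1$), and your secondary AHSS ``cross-check'' is loose for $d\geq p^n$ --- $k_n(L_p(\C^d))=2p^n$ is twice $\dim_{K(n)_*}K(n)^*(BC_p)$, so the module has not ``stabilized to $K(n)^*(BC_p)$''; making that route precise requires the paper's permanent-cycle argument for the even classes (or your own kernel/cokernel count).
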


Both propositions follow from computations using the Atiyah--Hirzebruch spectral sequence converging to $K(n)^*(X)$. As already described in \secref{morava k theory background section}, this has $E_2^{*,*} = H^*(X;\Z/p)[v_n^{\pm 1}]$, and first possible nonzero differential given by $ d_{2p^n-1}(x) = v_nQ_n(x)$,  where $Q_n$ is the $n$th Milnor primitive in the mod $p$ Steenrod algebra.

For the first proposition, one knows that $H^*(\RP(\R^d);\Z/2) = \Z/2[x]/(x^d)$.  From the definition of $Q_n$, one sees that $Q_n(x) = x^{2^{n+1}}$ if $2^{n+1} < d$.  As $Q_n$ is a derivation, it follows that $Q_n(x^r) = x^{2^{n+1}+r}$ if $r$ is odd and $2^{n+1} + r< d$,  and is 0 otherwise.  It follows that nonzero elements in the $E_{2^{n+1}}$--term of the spectral sequence consists of the even dimensional classes between degrees 0 and $2^{n+1}$, and the odd dimensional classes between degrees $d+1-2^{n+1}$ and $d$. Even dimensional classes are in the image of $K(n)^*(\C \mathbb P^{\infty})$ under the composite $\RP(\R^d) \hra \RP^{\infty} \ra \C \mathbb P^{\infty}$, and so are permanent cycles.  As the odd dimensional elements in the $E_{2^{n+1}}$--term are clearly permanent cycles for dimension reasons, it follows that there can be no higher differentials, and the proposition follows by counting classes.

When $p=2$, the first proposition includes the second, as $L_2(\mathbb C^d) = \RP(\R^{2d})$.

The proof of the second proposition when $p$ is odd is similar, starting from the calculations $H^*(L_p(\C^d);\Z/p) = \Lambda^*(x) \otimes \Z/p[y]/(y^d)$, with  $Q_n(x) = y^{p^n}$ if $p^n < d$.

\subsection{Using $L_p(\omega)$: the details of \exref{el ab example}}

Recall the situation of \exref{el ab example}.  Let $E_r = (C_p)^r$ and let $\rho^{\C}_r$ denote its regular representation: the sum of the $p^r$ distinct 1--dimensional complex representations of $E_r$.

We let $\omega = p^n\rho_r^{\C}$, and we want to show that
$$  k_{n+r-1}(L_p(\omega)) = 2p^{n+r-1}$$
and
$$  k_n(L_p(\omega)^{E_r}) = 2p^{n+r}. $$

As $p^n\rho_r^{\C}$ is $p^{n+r}$ dimensional, $L_p(\omega) = L_p(\C^{p^{n+r}})$, and the first follows immediately from \propref{Lens space prop}.

As $\omega$ is the direct sum of $p^n$ copies of each of the $p^r$ 1--dimensional representations of $E_r$, \propref{lens space fixed point formula} tells us that $L_p(\omega)^{E_r}$ is the disjoint union of $p^r$ copies of $L_p(\C^{p^n})$, and the second calculation also follows from \propref{Lens space prop}.

\subsection{Proof of \thmref{general lower bound theorem}}

We recall the hypotheses of \thmref{general lower bound theorem}.

We are assuming that $H$ be a proper nontrivial subgroup of a finite 2--group $G$ such that $\Phi(H) = \Phi(G)\cap H$, and that  $G$ has an irreducible real representation $\Delta$  such that $e_H\Res^G_H(\Delta)$ is the regular real representation of $H/\Phi(H)$ pulled back to $H$.

We wish to show that then, for all $n$, $ r_n(G,H) \geq r_-(G,H) + 1$.

Let $a=$ the rank of $G/\Phi(G)$ and let $b=$ the rank of $H/\Phi(H)$. The hypothesis $\Phi(H) = \Phi(G)\cap H$ means that $H/\Phi(H) \ra G/\Phi(G)$ is monic, and thus $r_-(G,H) = a-b$.  So we wish to show that $r_n(G,H) \geq a-b+1$.

Let $\widetilde \rho_G$ be the regular representation of $G/\Phi(G)$ pulled back to $G$, and, similarly, let $\widetilde \rho_H$ be the regular representation of $H/\Phi(H)$ pulled back to $H$.

Fixing $n$, let $$\omega = 2^{n+1} \widetilde \rho_G \oplus \Delta.$$

By \corref{lower bound method cor}, $r_n(G,H) \geq a-b+1$ will follow if we can show that
$$ k_{n+a-b}(\RP(\omega)^H) < k_n(\RP(\omega)^G).$$

The next two lemmas say what we need.

\begin{lem} \label{Morava K-theory of G fixed point lemma}  $k_n(\RP(\omega)^G) = 2^{n+a+1}$
\end{lem}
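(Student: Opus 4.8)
The plan is to compute $k_n(\RP(\omega)^G)$ directly using the fixed point formula \propref{projective space fixed point formula} together with the Morava $K$--theory calculation \propref{RP prop}. First I would apply \corref{fixed point cor}, which tells us that $\RP(\omega)^G = \RP(e_G\omega)^G$, so only the part of $\omega$ on which $\Phi(G)$ acts trivially contributes. Since $\Delta$ is irreducible of dimension $2^r > 1$ (assuming $r \geq 1$; if $\Delta$ were one-dimensional the hypothesis forces $H/\Phi(H)$ trivial, contradicting $H$ nontrivial), $\Phi(G)$ cannot act trivially on $\Delta$, so $e_G\Delta = 0$. On the other hand $\widetilde\rho_G$ is pulled back from $G/\Phi(G)$, so $\Phi(G)$ acts trivially and $e_G\widetilde\rho_G = \widetilde\rho_G$. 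Hence $e_G\omega = 2^{n+1}\widetilde\rho_G$.

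Next I would identify $\RP(e_G\omega)^G$ via \propref{projective space fixed point formula}: it is the disjoint union, over the one-dimensional real representations $\lambda_i$ of $G$ (equivalently, of $G/\Phi(G)$, of which there are $2^a$), of $\RP((e_G\omega)_i)$. Since $\widetilde\rho_G$ is the regular representation of $G/\Phi(G)$, it contains each of the $2^a$ one-dimensional real representations exactly once, so $2^{n+1}\widetilde\rho_G$ contains each exactly $2^{n+1}$ times, i.e. $(e_G\omega)_i \cong \R^{2^{n+1}}$ for every $i$. Therefore
$$\RP(\omega)^G = \coprod_{i=1}^{2^a} \RP(\R^{2^{n+1}}).$$
The Morava $K$--theory is additive over disjoint unions (after adding disjoint basepoints), so $k_n(\RP(\omega)^G) = 2^a \cdot k_n(\RP(\R^{2^{n+1}}))$.

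Finally I would invoke \propref{RP prop} with $d = 2^{n+1}$: this is the boundary case $d \leq 2^{n+1}$ (and $d$ even), giving $k_n(\RP(\R^{2^{n+1}})) = 2^{n+1}$. Multiplying, $k_n(\RP(\omega)^G) = 2^a \cdot 2^{n+1} = 2^{n+a+1}$, as claimed. There is no real obstacle here — the only points requiring a line of care are that $e_G\Delta = 0$ (which uses irreducibility and $\dim\Delta > 1$) and that the regular representation of an elementary abelian $2$--group contains each one-dimensional real character with multiplicity one, both of which are elementary representation-theoretic facts; everything else is bookkeeping with the two cited propositions.
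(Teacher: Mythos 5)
Your proof is correct and follows essentially the same route as the paper: count that each of the $2^a$ one-dimensional real representations of $G$ occurs exactly $2^{n+1}$ times in $\omega$, apply \propref{projective space fixed point formula} to get $\coprod_{2^a}\RP(\R^{2^{n+1}})$, and then use \propref{RP prop}. Your extra observation that $e_G\Delta=0$ (equivalently, that the irreducible $\Delta$ contributes no fixed lines because $\dim\Delta>1$, forced by $H$ being nontrivial) is a detail the paper leaves implicit, and it is correctly justified.
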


\begin{lem} \label{Morava K-theory of H fixed point lemma} $k_{n+a-b}(\RP(\omega)^H) = 2^{n+a+1} -2^b$.
\end{lem}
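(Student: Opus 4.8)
The strategy is to reduce the computation of $k_{n+a-b}(\RP(\omega)^H)$ to the known Morava $K$--theory of real projective spaces (Proposition~\ref{RP prop}) by first identifying $\RP(\omega)^H$ explicitly as a disjoint union of real projective spaces. First I would restrict $\omega = 2^{n+1}\widetilde\rho_G \oplus \Delta$ to $H$. By Corollary~\ref{fixed point cor}, only the summand $e_H\Res^G_H(\omega)$ matters for computing the $H$--fixed points, so I compute $e_H\Res^G_H(\omega) = 2^{n+1} e_H\Res^G_H(\widetilde\rho_G) \oplus e_H\Res^G_H(\Delta)$. The second term is, by hypothesis, exactly the regular real representation $\widetilde\rho_H$ of $H/\Phi(H)$. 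For the first term, I need that $e_H\Res^G_H(\widetilde\rho_G)$ is a sum of copies of $\widetilde\rho_H$: since $\widetilde\rho_G$ is the regular representation of the elementary abelian group $G/\Phi(G)$ of rank $a$, pulled back to $G$, restricting to $H$ and applying $e_H$ gives the representation of $H/\Phi(H)$ obtained by restricting along the monomorphism $H/\Phi(H)\hookrightarrow G/\Phi(G)$; restricting the regular representation of a rank-$a$ elementary abelian group to a rank-$b$ subgroup yields $2^{a-b}$ copies of the regular representation of the subgroup. Hence $e_H\Res^G_H(\widetilde\rho_G) \cong 2^{a-b}\widetilde\rho_H$, and therefore $e_H\Res^G_H(\omega) \cong (2^{n+1+a-b} + 1)\widetilde\rho_H$.

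Next I would apply Proposition~\ref{projective space fixed point formula} together with Corollary~\ref{fixed point cor}: since $e_H\Res^G_H(\omega)$ is a sum of $2^{n+1+a-b}+1$ copies of each of the $2^b$ one-dimensional real representations of $H/\Phi(H)$ (these being all the one-dimensional real representations of $H$, as $H$ is a $2$--group so all irreducibles pulled back from $H/\Phi(H)$ are the linear ones), the fixed point space $\RP(\omega)^H$ is a disjoint union of $2^b$ copies of $\RP(\R^{N})$ where $N = 2^{n+1+a-b}+1$. Then $k_{n+a-b}(\RP(\omega)^H) = 2^b \cdot k_{n+a-b}(\RP(\R^{N}))$. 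Now I invoke Proposition~\ref{RP prop} with the chromatic degree $n+a-b$: the relevant threshold is $2^{(n+a-b)+1} = 2^{n+a-b+1}$, and $N = 2^{n+1+a-b}+1 = 2^{n+a-b+1}+1 > 2^{n+a-b+1}$ with $N$ odd, so we are in the third case and $k_{n+a-b}(\RP(\R^N)) = 2^{n+a-b+1}-1$. Multiplying by $2^b$ gives $2^b(2^{n+a-b+1}-1) = 2^{n+a+1} - 2^b$, which is the claimed value.

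The main obstacle is the representation-theoretic identity $e_H\Res^G_H(\widetilde\rho_G) \cong 2^{a-b}\widetilde\rho_H$ — one must be careful that $\Phi(G)\cap H = \Phi(H)$ is exactly what makes $e_H$ interact correctly with the pullback of $\widetilde\rho_G$ (so that the pulled-back characters of $G/\Phi(G)$ that survive $e_H$ are precisely those factoring through $H/\Phi(H)$, each appearing with the right multiplicity via the restriction map $\widehat{G/\Phi(G)}\twoheadrightarrow\widehat{H/\Phi(H)}$ on character groups, which is surjective with kernel of order $2^{a-b}$). Everything else is bookkeeping with Propositions~\ref{RP prop} and~\ref{projective space fixed point formula}. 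I would also want to double-check the companion Lemma~\ref{Morava K-theory of G fixed point lemma} is handled by the same method with $H$ replaced by $G$: there $e_G\omega = 2^{n+1}\widetilde\rho_G \oplus e_G\Delta$, and since $\Delta$ is irreducible of dimension $2^r > 1$ (hence not pulled back from $G/\Phi(G)$, so $e_G\Delta = 0$) one gets $\RP(\omega)^G$ equal to $2^a$ copies of $\RP(\R^{2^{n+1}})$, giving $2^a \cdot 2^{n+1} = 2^{n+a+1}$ by the even case of Proposition~\ref{RP prop}; the inequality $k_{n+a-b}(\RP(\omega)^H) = 2^{n+a+1}-2^b < 2^{n+a+1} = k_n(\RP(\omega)^G)$ then follows immediately and Corollary~\ref{lower bound method cor} finishes the proof of Theorem~\ref{general lower bound theorem}.
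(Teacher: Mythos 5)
Your proposal is correct and follows essentially the same route as the paper: use Corollary~\ref{fixed point cor} to reduce to $e_H\Res^G_H(\omega)$, compute $e_H\Res^G_H(\widetilde\rho_G)=2^{a-b}\widetilde\rho_H$ and $e_H\Res^G_H(\Delta)=\widetilde\rho_H$ to get $(2^{n+a-b+1}+1)\widetilde\rho_H$, then apply Proposition~\ref{projective space fixed point formula} and the odd case of Proposition~\ref{RP prop} to obtain $2^b(2^{n+a-b+1}-1)=2^{n+a+1}-2^b$. Your extra care about the role of $\Phi(H)=\Phi(G)\cap H$ in identifying the restricted regular representation matches the paper's (more tersely stated) argument.
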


\begin{proof}[Proof of \lemref{Morava K-theory of G fixed point lemma}]  Each of the $2^a$ 1--dimensional real representations of $G$ occurs exactly $2^{n+1}$ times in $\omega$, and thus the corresponding isotypical components of $\omega$ all have dimension $2^{n+1}$.  Thus
$$ k_n(\RP(\omega)^G) = 2^a k_n(\RP(\R^{2^{n+1}})) = 2^a \cdot 2^{n+1} = 2^{n+a+1}.$$
\end{proof}

\begin{proof}[Proof of \lemref{Morava K-theory of H fixed point lemma}]
By \corref{fixed point cor}, $\RP(\omega)^H = \RP(e_H\omega)^H$.  We analyze $e_H\Res^G_H(\omega)$.

Since $H/\Phi(H)$ has index $2^{a-b}$ in $G/\Phi(G)$, we have that $\Res^G_H(\widetilde \rho_G) = 2^{a-b}\widetilde \rho_H$, and $e_H$ acts as the identity on this.   Meanwhile, we have assumed that $e_H\Res^G_H(\Delta) = \widetilde \rho_H$.  Thus
\begin{equation*}
\begin{split}
e_H\Res^G_H(\omega) &
= e_H\Res_H^G(2^{n+1}  \widetilde \rho_G \oplus \Delta) \\
  & = (2^{n+1}2^{a-b}+1)\widetilde \rho_H \\
  & = (2^{n+a-b+1}+1)\widetilde \rho_H.
\end{split}
\end{equation*}
This implies that each of the $2^b$ 1--dimensional representations of $H$ occurs $2^{n+a-b+1}+1$ times in $\omega$, and thus the corresponding isotypical components of $\omega$ all have dimension $2^{n+a-b+1}+1$.  Thus
$$ k_{n+a-b}(\RP(\omega)^H) = 2^b k_n(\RP(\R^{2^{n+a-b+1}+1})) = 2^b \cdot (2^{n+a-b+1}-1) = 2^{n+a+1}-2^b.$$

\end{proof}

\begin{ex}  It is worth seeing explicitly how and why this all works in the simplest example, when $n=0$, $G = D_8$ and $H = C$, a noncentral subgroup of order 2.

$D_8$ has four 1-dimensional representations $\lambda_1, \dots, \lambda_4$, and one 2--dimensional irreducible $\Delta$, which, when restricted to $C$ is $1 \oplus \sigma$, the sum of the two 1-dimensional representations of $C$.

We let
$$\omega = 2(\lambda_1 \oplus \lambda_2 \oplus \lambda_3 \oplus \lambda_4) \oplus \Delta,$$
a 10--dimensional representation of $D_8$.  Thus $\RP(\omega)$ is the space  $\RP(\R^{10}) = \RP^9$ with an action of $D_8$, and we have
$$ (\RP^9)^{D_8} = \coprod_4 \RP(\R^2) =\coprod_4 \RP^1, $$
So that $k_0(\RP^9)^{D_8} = 4\cdot 2 = 8$.

Meanwhile, when viewed as a representation of $H$,
$$ \omega = 5(1 \oplus \sigma).$$
Thus
$$(\RP^9)^{C} = \coprod_2 \RP(\R^5) = \coprod_2 \RP^4.$$
Since $Q_1$ acts nontrivially on $H^*(\RP^4;\Z/2)$ (not true with $\RP^4$ replaced by $\RP^3$!), we have that $k_1(\RP^4) = 3$, so that $k_1(\RP^9)^{C} = 2\cdot 3 = 6$.

\end{ex}

\section{Blue shift numbers for extraspecial 2-groups} \label{ex special 2-groups section}

\subsection{Extraspecial 2--groups and their real representations}

We collect some information we will need about extraspecial 2--groups and their real representations.  A general reference for the group theory is \cite[Chapter 8]{aschbacher}, and \cite{quillen} has what we need about the representation theory.

By definition, an extraspecial 2--group is a finite 2--group $\widetilde E$ such that $\widetilde E^{\prime} = \Phi(\widetilde E) = Z(\widetilde E)$ is cyclic of order 2.  Thus it is a nonabelian group that fits into a central extension
$$ C_2 \xra{i} \widetilde E \xra{\pi} E,$$
with $E$ elementary abelian.

One defines $q: E \ra C_2$ by the formula $q(a) = c$ if $\pi(\tilde a) = a$ and $\tilde a^2 = i(c)$, and then $\langle \text{\hspace{.1in}, \hspace{.1in}} \rangle: E \times E \ra C_2$ by $\langle a, b \rangle = q(a+b) - q(a) - q(b)$.  Then $\langle \text{\hspace{.1in}, \hspace{.1in}} \rangle$ is nondegenerate, symmetric, and bilinear, and $q$ is a quadratic form.  These then determine the group structure on $\widetilde E$ by the formulae $\tilde a^2 = i(q(\pi(\tilde a)))$ and $[\tilde a, \tilde b] = i(\langle \pi(\tilde a), \pi(\tilde b)\rangle)$.

Quadratic forms like this are classified by their Arf invariant: one learns that $E$ must be of even dimension, and that, up to isomorphism,  there are two distinct possible quadratic functions $q$ on $E_{2r}$. The one that will concern us has Arf invariant 0: $q = x_1y_1+ \dots + x_ry_r$, where $(x_1, \dots, x_r, y_1, \dots, y_r)$ is dual to a basis $(a_1, \dots, a_r, b_1, \dots, b_r)$ for $E$.

It follows that $\widetilde E_{2r}$ has generators $\tilde a_1, \dots, \tilde a_r, \tilde b_1, \dots, \tilde b_r, c$ with $c^2 = e$, $\tilde a_i^2 = \tilde b_i^2 = c$ for all $i$, and with all generators commuting except that $[\tilde a_i,\tilde b_i] = c$ for all $i$.

A subspace $W < E_{2r}$ is {\em isotropic} if $\langle W,W \rangle = 0$.  It is not hard to see that, under $\pi: \widetilde E_{2r} \ra E_{2r}$, maximal elementary abelian subgroups of $\widetilde E_{2r}$ not containing the center $\langle c \rangle$ will correspond to maximal isotropic subspaces of $E_{2r}$, and all such will be equivalent to $W_r$, the subgroup generated by $\tilde a_1, \dots, \tilde a_r$.

Our group $\widetilde E_{2r}$ is sometimes denoted $2_+^{1+2r}$ in the literature, and can also be described as $D_8^{\circ r}$, the central product of $r$ copies of the dihedral group $D_8$ of order 8.  The other extraspecial 2--group of order $2^{1+2r}$, sometimes denoted $2_-^{1+2r}$, is $Q_8 \circ D_8^{\circ r-1}$, where $Q_8$ is the quaternionic group of order 8.

The $2^r$ 1--dimensional real representations of $E_{2r}$ pullback to give 1--dimensional real representations of $\widetilde E_{2r}$.  The group $\widetilde E_{2r}$ has one more irreducible real representation $\Delta_r$, a faithful representation of dimension $2^r$ on which $c$ acts as $-1$.  Of key importance to us is that $\Delta_r$ restricted to $W_r$ is the regular representation of $W_r$ \cite[(5.1)]{quillen}.

\subsection{The computation of $r_n(\widetilde E_{2r}, W_r)$ and $r_n(\widetilde E_{2r} \times E_s,W_r \times \{e\})$}

We compute $r_n(\widetilde E_{2r}, W_r)$. We have that
$$\Phi(W_r) = \{e\} = \Phi(\widetilde E_{2r}) \cap W_r,$$
and $\Delta_r$ restricted to $W_r$ is the regular representation, so the hypotheses of \thmref{general lower bound theorem} hold.  As $r_-(\widetilde E_{2r}, W_r) = r$, we deduce the conclusion of \thmref{extraspecial group thm}: $r_n(\widetilde E_{2r}, W_r) = r+1= r_+(\widetilde E_{2r}, W_r)$.

To fill in the details of \exref{extraspecial 2 group ex}, we let $\omega = 2^{n+1}\widetilde \rho_{2r} \oplus \Delta_r$,
where $\widetilde \rho_{2r}$ is the real regular representation of $E_{2r}$, pulled back to $\widetilde E_{2r}$.

\lemref{Morava K-theory of G fixed point lemma} tells us that
$$ k_n(\RP(\omega)^{\widetilde E_{2r}}) = 2^{n+1+2r}, $$
while \lemref{Morava K-theory of H fixed point lemma} tells us that
$$  k_{n+r}(\RP(\omega)^{W_r}) = 2^{n+1+2r}-2^r.$$

Similarly, \thmref{extraspecial group product with el ab thm} is the special case of \thmref{general lower bound theorem}, applied to the pair $(\widetilde E_{2r} \times E_s, W_r \times \{e\})$, with the special representation of $\widetilde E_{2r} \times E_s$ chosen to be $\Delta_r$, pulled back to the product.  Now $r_-(\widetilde E_{2r} \times E_s, W_r \times \{e\} = r+s$, so we learn that
$$r_n(\widetilde E_{2r} \times E_s,W_r \times \{e\}) = r+s+1  = r_+(\widetilde E_{2r}\times E_s,W_r \times \{e\}).$$

\subsection{Blue shift numbers for a family of groups}

Let $\mathcal G$ be the collection of 2--groups $G$ fitting into a central extension
$$ C_2 \ra G \xra{p} E$$
with $E$ elementary abelian.

Our goal in this subsection is to prove \thmref{good family thm}, which said the following:

{\em Let $G \in \mathcal G$.  For all $K<H<G$, $r_n(H,K) = r_+(H,K)$ for all $n$.} \\

We show that the family of calculations
$$r_n(\widetilde E_{2r} \times E_s,W_r \times \{e\}) = r+s+1  = r_+(\widetilde E_{2r}\times E_s,W_r \times \{e\})$$
suffices to prove this.

We start with some elementary observations.

Note that if $G$ is in $\mathcal G$ and $H<G$ is nontrivial, then $H$ is again in $\mathcal G$.  Thus it suffices to show that if $G \in \mathcal G$ then $r_n(G,H) = r_+(G,H)$ for all $H<G$.  We prove this by induction on $|G|$.

For any group $G \in \mathcal G$, either $G$ is abelian or $G^{\prime} = \Phi(G) = C_2$. If $G$ is abelian we are done: $r_n(G,H) = r_+(G,H)$ for all $H<G$.  Thus we can assume this is not the case.

Next observe that if $G \in \mathcal G$ and $N\triangleleft G$ is any proper normal subgroup, then $G/N \in \mathcal G$.  Now let $N = H \cap Z(G)$ which will be a normal subgroup of $G$.  Then $r_n(G,H) = r_n(G/N,H/N)$ and $r_+(G,H) = r_+(G/N,H/N)$.  If $N \neq \{e\}$ then $r_n(G/N,H/N) = r_+(G/N,H/N)$ by our inductive assumption, and we are done.  Thus we can assume that $H \cap Z(G) = \{e\}$.

Since $\Phi(G) \leq Z(G)$, we see that $H \cap \Phi(G) = \{e\}$ also.  This implies that $p: H \ra E$ is monic, so $H$ is elementary abelian.

We isolate the next part of our argument as a lemma.

\begin{lem} \label{order 4 lemma}  In this situation, suppose that $C_G(H)$ contains an element of order 4.   Then $r_-(G,H) = r_+(G,H)$, and so $r_n(G,H) = r_+(G,H)$.
\end{lem}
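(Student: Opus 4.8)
The plan is to exploit the order-4 element in $C_G(H)$ to build a chain of subgroups between $H$ and $G$ with cyclic subquotients that is short enough to force $r_+(G,H) = r_-(G,H)$; since we always have $r_-(G,H) \leq r_n(G,H) \leq r_+(G,H)$ (by \corref{lower bound cor} and \corref{upper bound cor}), equality of the outer bounds pins down $r_n(G,H)$ for all $n$.

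First I would set up notation: we are in the situation where $G \in \mathcal G$ is nonabelian with $G' = \Phi(G) = Z(G) = C_2$ (generated by the central element $c$, say), $H$ is elementary abelian with $H \cap Z(G) = \{e\}$, and $p: H \hra E = G/\langle c\rangle$ is monic. Let $a = \operatorname{rank} G/\Phi(G) = \operatorname{rank} E$ and $b = \operatorname{rank} H$; then $r_-(G,H) = a - b$, since $H\Phi(G)/\Phi(G) \cong H$ has rank $b$ in the rank-$a$ group $G/\Phi(G)$. The goal is therefore to produce a subgroup chain $H = K_0 \lhd K_1 \lhd \dots \lhd K_{a-b} = G$ with each $K_i \lhd K_{i+1}$ and each quotient cyclic, which would give $r_+(G,H) \leq a-b = r_-(G,H)$, hence equality.

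The key idea is to use the order-4 element. Let $x \in C_G(H)$ have order 4; then $x^2 \in \Phi(G) = \langle c \rangle$, so $x^2 = c$, i.e. $x$ maps to an element $\bar x \in E$ with $q(\bar x) \neq 0$ (in the quadratic-form language), and $\bar x \notin p(H)$ since $p(H)$ pulls back to the elementary abelian $H$ whereas $\langle x, c\rangle \cong C_4$. Set $K_1 = \langle H, x\rangle = H \cdot \langle x \rangle$. Because $x$ centralizes $H$ and $x^2 = c \notin H$, one checks $K_1$ is a group with $H \lhd K_1$ (indeed $H$ is central in $K_1$... or at least normal) and $K_1/H$ is cyclic of order $4$ — generated by the image of $x$ — OR possibly I should take $K_1/H$ cyclic of order 2 by using $\langle H, x^2\rangle$ first and then $\langle H, x \rangle$; either way the subquotients $K_1/H$ are cyclic. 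Crucially, $\langle c \rangle \leq K_1$, so $K_1/\langle c\rangle$ sits inside $E$, and $\operatorname{rank}(K_1 \cdot \Phi(G)/\Phi(G)) = \operatorname{rank}(p(H) + \langle \bar x\rangle) = b+1$. Now apply the inductive structure: pass to $G/\langle c\rangle = E$, where everything is elementary abelian, and lift a chain from $p(K_1) = p(H) + \langle \bar x\rangle$ up to $E$ with cyclic (order 2) quotients of length $a - (b+1)$; pulling back gives a chain from $K_1$ to $G$ of the same length with cyclic subquotients. Splicing in the step $H \lhd K_1$ (of length 1) yields a chain of total length $a - b$ from $H$ to $G$, as desired.

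The main obstacle I anticipate is verifying that $C_G(H)$ genuinely contains an element of order $4$ whose square is the central involution and that lies outside $H\langle c\rangle$ — but that is exactly the hypothesis of the lemma, so really the work is the bookkeeping: confirming $K_1 = H\langle x\rangle$ is a subgroup with $H$ normal and cyclic quotient, confirming $\langle c \rangle \leq K_1$ so that the quotient $K_1/\langle c\rangle$ lives in $E$ and has rank exactly $b+1$, and checking that lifting a subgroup chain through the central extension $C_2 \ra G \ra E$ preserves the normality and cyclicity of subquotients (it does, since preimages of normal subgroups are normal and $K_{i+1}/K_i \cong (K_{i+1}/\langle c\rangle)/(K_i/\langle c\rangle)$ once $\langle c\rangle \leq K_i$). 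Assembling these, we get $r_+(G,H) \leq a-b$; combined with the always-true inequality $r_-(G,H) \leq r_+(G,H)$ and the sandwich $r_-(G,H)\leq r_n(G,H)\leq r_+(G,H)$, we conclude $r_n(G,H) = r_-(G,H) = r_+(G,H)$ for all $n$.
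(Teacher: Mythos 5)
Your proposal is correct and is essentially the paper's own argument: adjoin the order-4 element $x \in C_G(H)$ to form $K = H\langle x\rangle$, observe that $x^2$ generates $\Phi(G)$ so $K \supseteq \Phi(G)$ and the rest of the problem lives in the elementary abelian quotient $E = G/\Phi(G)$, giving $r_+(G,H) \leq 1 + (a-b-1) = r_-(G,H)$ and hence equality by the sandwich $r_-\leq r_n\leq r_+$. The paper merely packages your explicit chain-splicing through the transitivity inequality for $r_+$ and the quotient lemma (and the fact that $r_+=r_-$ for elementary abelian groups); the only harmless slip in your setup is asserting $Z(G)=C_2$, which is neither assumed at this stage nor needed in your argument.
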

\begin{proof}
Suppose that there exists $x \in C_G(H)$ of order 4.  As $H$ is elementary abelian, we know that $x \not \in H$. Since $x^2$ must generate $\Phi(G)$, we can further conclude that $x \not \in H\Phi(G)$, which means that $p(x) \not \in p(H)$.  If we let $K<G$ be the subgroup generated by $H$ and $x$, then we have
\begin{equation*}
\begin{split}
r_+(G,H)  &
\leq 1 + r_+(G,K) \\
  & = 1 + r_+(E, p(K)) \\
  & = 1 + r_-(E, p(K)) \\
  & = r_-(E, p(H)) \\
  & = r_-(G, H),
\end{split}
\end{equation*}
\end{proof}

Since $Z(G) < C_G(H)$, the lemma implies that we can assume that $Z(G)$ is elementary abelian, and thus admits a decomposition $Z(G) = C_2 \times E_s$ for some $s$, where the first factor is $G^{\prime}$.  If we let $\widetilde E = G/E_s$ then $\widetilde E$ will be an extraspecial 2--group, so $\widetilde E/\widetilde E^{\prime} = E_{2r}$, for some $r$, and the sequence
$$ C_2 \ra G \xra{p} E$$
identifies with a sequence of the form
$$ C_2 \ra \widetilde E \times E_s \xra{p \times 1} E_{2r} \times E_s.$$

Now recall that $H < G = \widetilde E \times E_s$ is elementary abelian and that $H \cap Z(G)$ is trivial.  Since $Z(G) = C_2 \times E_s$, we conclude that $H$ projects isomorphically to an elementary abelian subgroup in $\widetilde E$ that does not contain the central $C_2$. Another way of putting this, is that $H$ is the graph of a homomorphism $W \ra E_s$, where $W < \tilde E$ is an elementary abelian subgroup that does not contain the central $C_2$.  One can conclude that $C_G(H) = C_G(W) = C_{\widetilde E}(W) \times E_s$.

If $\widetilde E = Q_8 \circ D_8^{\circ r-1}$, then $Q_8 < C_{\widetilde E}(W)$, and so the centralizer contains an element of order 4, and the lemma applies.  Similarly, there is an element of order 4 in $C_{\widetilde E}(W)$  if $\widetilde E = \widetilde E_{2r}$ and $W$ has rank less than $r$.

So we can assume that our pair $(G,H) = (\widetilde E_{2r} \times E_s, H)$ where $H$ is the graph of a homomorphism $W_r \ra E_s$.  But it is easy to check that this pair is equivalent to $(\widetilde E_{2r} \times E_s, W_r \times \{e\})$, so $r_n(G,H) = r_+(G,H)$ by \thmref{extraspecial group product with el ab thm}.

\section{Essential pairs and final remarks} \label{essential pairs section}

\subsection{Essential pairs}

If one wishes to systematically try to prove that $r_n(G,H) = r_+(G,H)$ for all $H < G$, one can focus on potential minimal counterexamples.  By pulling back actions through quotient maps, these must be pairs as in the following definition.

\begin{defn} If $H$ is a subgroup of a finite $p$--group $G$, say $(G,H)$ is an {\em essential pair}, if $r_+(G,H) > r_+(G/N,HN/N)$ for all nontrivial normal subgroups $N \lhd G$.
\end{defn}

Clearly if $(G,H)$ is essential, it is necessary that $H$ contain no nontrivial normal subgroups of $G$, so, in particular, $H \cap Z(G) = \{e\}$.  Also, since $r_-(G,H) = r_+(G/\Phi(G), H\Phi(G)/\Phi(G))$, we see that if $(G,H)$ is essential, then either $r_-(G,H)<r_+(G,H)$ or $\Phi(G)$ is trivial (i.e., $G$ is elementary abelian).

To easily identify essential pairs, the following lemma is useful.

\begin{lem} $(G,H)$ is essential if $r_+(G,H) > r_+(G/C,HC/C)$ for all central subgroups $C<G$ of order $p$.
\end{lem}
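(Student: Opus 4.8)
The plan is to deduce the general essentiality condition (over all nontrivial normal $N \lhd G$) from the special case of central subgroups of order $p$, by combining two ingredients already available: the quotient-monotonicity of $r_+$ proved earlier in \secref{r numbers section} (``if $N$ is normal in $G$ then $r_+(G,H) \geq r_+(G/N,HN/N)$''), and the standard fact that a nontrivial normal subgroup of a finite $p$--group meets the center nontrivially.

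First I would fix an arbitrary nontrivial normal subgroup $N \lhd G$ and aim to show $r_+(G,H) > r_+(G/N, HN/N)$. Since $G$ is a finite $p$--group and $N \neq \{e\}$ is normal, $N \cap Z(G) \neq \{e\}$, so there is a subgroup $C \leq N \cap Z(G)$ of order $p$. This $C$ is central in $G$ of order $p$, so the hypothesis of the lemma applies directly and gives
$$ r_+(G,H) > r_+(G/C, HC/C). $$

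Next I would compare $r_+(G/C, HC/C)$ with $r_+(G/N, HN/N)$. In the quotient group $G/C$, the image $N/C$ is a normal subgroup, and the third isomorphism theorem gives $(G/C)/(N/C) \cong G/N$, under which the image of $HC/C$ maps to $HCN/N = HN/N$ (using $C \leq N$). Applying the quotient-monotonicity property of $r_+$ to the pair $(G/C, HC/C)$ and the normal subgroup $N/C \lhd G/C$ then yields
$$ r_+(G/C, HC/C) \geq r_+(G/N, HN/N). $$
Chaining the two displayed inequalities gives $r_+(G,H) > r_+(G/N, HN/N)$, which is precisely the condition for $(G,H)$ to be essential. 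I expect no genuine obstacle here: the only points requiring care are the invocation of the center-meeting-normal-subgroup fact for $p$--groups and the routine bookkeeping of subgroup images under the isomorphism theorems.
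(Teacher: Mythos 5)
Your proof is correct and is essentially the same as the paper's: both hinge on the fact that a nontrivial normal subgroup $N$ of a finite $p$--group meets the center, giving a central $C \leq N$ of order $p$, together with the chain $r_+(G,H) \geq r_+(G/C,HC/C) \geq r_+(G/N,HN/N)$ coming from the quotient-monotonicity of $r_+$. The only difference is presentational: the paper argues in contrapositive form, while you argue directly and spell out the third isomorphism theorem step.
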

\begin{proof}  We prove the lemma in its contrapositive form.

The group $G$ acts on any normal subgroup $N$ by conjugation, and the fixed point set identifies with $N \cap Z(G)$.  As $G$ is a $p$--group, the number of fixed points must be congruent to 0 mod $p$. As $e \in N$ is clearly fixed, we conclude that $N \cap Z(G)$ is nontrivial, and thus $N$ contains a central subgroup $C$ of order $p$.

Since $r_+(G,H) \geq r_+(G/C,HC/C) \geq r_+(G/N,HN/N)$ holds in general, if $r_+(G,H) = r_+(G/N,HN/N)$ then $r_+(G,H) = r_+(G/C,HC/C)$.
\end{proof}

\begin{exs}

The only essential pairs $(G,H)$ with $G$ abelian are the pairs $(E_r, \{e\})$, for $r \geq 1$.
\end{exs}

\begin{exs} At the prime 2, the pairs $(\widetilde E_{2r} \times E_s,W_r \times \{e\})$ are essential, and, up to equivalence, there are no other essential pairs $(G,H)$ with $G$ in the family of groups for which \thmref{good family thm} applies.
\end{exs}

\begin{exs} If $p$ is odd, let $\widetilde E_{2r}$ denote the extraspecial group of order $p^{1+2r}$ having exponent $p$.  Just as in the $p=2$ case, we let $W_r$ denote any elementary abelian $p$--subgroup of rank $r$ that doesn't contain the center.   Then $(\widetilde E_{2r} \times E_s, \{e\})$ is essential, with
$$r_{-}(\widetilde E_{2r} \times E_s,\{e\}) = 2r + s < 1+2r+s = r_{+}(\widetilde E_{2r} \times E_s,\{e\}),$$
as are the pairs $(\widetilde E_{2r} \times E_s, W_r \times \{e\})$, with
$$r_{-}(\widetilde E_{2r} \times E_s, W_r \times \{e\}) = r + s < 1+r+s = r_{+}(\widetilde E_{2r} \times E_s, W_r \times \{e\}).$$
\end{exs}

In the appendix, we include tables of all essential pairs $(G,H)$ with $G$ a nonabelian 2--group of order up to 32.  Here we highlight a few of these that we feel are the pairs that need to be understood if any more significant progress is to be made on the Chromatic Smith Theorem problem.

\begin{ex}  Let $G$ be the group with GAP label 16 \#3.  This is a semidirect product $C_2^2\rtimes C_4$, with $C_4$ acting on $C_2^2$ via the quotient $C_4 \twoheadrightarrow C_2$.  It also fits into a central extension
$$ C_2 \ra G \ra C_2 \times C_4,$$
so the group is `almost' in our family of friendly groups dealt with in \thmref{good family thm}, but not quite.

Then $(G,\{e\})$ is essential, $r_-(G,\{e\}) = 2$ and $r_+(G,\{e\}) = 3$.
\end{ex}

\begin{ex}  Let $M_2(4)$ be the group with GAP label 16 \#6.  This fits into a central extension
$$ C_4 \ra M_2(4) \ra C_2 \times C_2,$$
and also a central extension
$$ C_2 \ra M_2(4) \ra C_2 \times C_4,$$
so again the group is almost, but not quite,  in our family of friendly groups.

Let $C<M_2(4)$ be any of the noncentral subgroups of order 2, e.g. GAP subgroup \#3.
Then $(M_2(4),C)$ is essential, $r_-(M_2(4),C) = 1$ and $r_+(M_2(4),C) = 2$.

It is interesting to note that, though $(M_2(4),C)$ is essential, $(M_2(4)\times C_2,C \times \{e\})$ is not, as the calculation in \exref{M_4(2) product example} shows.
\end{ex}

\begin{ex}  The dihedral group $D_{16}$ has GAP label 16 \#7.  Then $Z(D_{16}) = C_2 < C_4 = D_{16}^{\prime} = \Phi(D_{16})$, so $D_{16}$ can be written as a noncentral extension
$$ C_4 \ra D_{16} \ra C_2 \times C_2.$$

Let $C<D_{16}$ be any of the noncentral subgroups of order 2, e.g. GAP subgroup \#3.
Then $(D_{16},C)$ is essential, $r_-(D_{16},C) = 1$ and $r_+(D_{16},C) = 3$.
\end{ex}

Our last example illustrates how complicated things become as one examines groups of order 32.

\begin{ex}  Let $G$ be the group with GAP label 32 \#6.  This is a semidirect product $C_2^3\rtimes C_4$, with $C_4$ acting faithfully on $C_2^3$.

Then $(G,H)$ is essential when $H$ is any of the inequivalent subgroups with GAP number \#3,9,14,19,24.
\end{ex}

\subsection{The limitations of our $\RP(\omega)$ and $L_p(\omega)$ constructions}

It is worth pondering why we are able to prove interesting lower bound theorems using the $\RP(\omega)$ and $L_p(\omega)$ constructions, and how these theorems are limited.

Informally, the fixed point formulae for $\RP(\omega)^G$ shows that there is a mod 2 cohomology class for each 1--dimensional real representation in $\omega$, and these are arranged in `piles' corresponding to the distinct representations. When one considers $\RP(\omega)^H$, these piles get `stacked up' when distinct representations becomes the same when restricted to $H$, and there are new classes coming from higher dimensional irreducible summands of $\omega$ that have 1--dimensional summands when restricted to $H$.

In the proof of the elementary abelian lower bound, \thmref{el ab lower bound theorem}, the action of the Milnor $Q_m$'s on the piles align just right to show that $r_-(G,H) \leq r_n(G,H)$.  To do better, we need $k_{m}(\RP(\omega)^H)$ to be made smaller, and this means that we need some new 1-dimensional $H$--representations to cancel some of those pulled back from $G$, via the operation $Q_m$.  Since, as a function of $d$, $k_m(\RP(\R^d))$ goes up and down only by 1 once $d$ is large, we see that $|H/\Phi(H)|$, the number of 1--dimensional representation of $H$, is the most that we can lower $k_{m}(\RP(\omega)^H)$, by adding higher dimensional irreducible  $G$--representations to $\omega$. When one ponders the numbers, it becomes clear that this is not enough of a change to prove more than $r_-(G,H)+1 \leq r_n(G,H)$.

In the odd prime situation, the situation is even worse: a 1-dimensional complex representation of $H$ contributes both an odd and an even dimensional class to the mod $p$ cohomology of $L_p(\omega)^H$, and if we add such a pair like this coming from a new 1--dimensional $H$--representation, $Q_m$ may pair an old odd class with the new even class, but the new odd class will still be left.  Otherwise said, $k_m(L_p(\C^d))$ is constant once $d$ is large, and we can never use this method to prove more than $r_-(G,H) \leq r_n(G,H)$.

\appendix

\section{Essential pairs $(G,H)$ with $|G| \leq 32$}

\begin{center}
\begin{tabular}{|l | l | c | c |  c |}
  \multicolumn{5}{c}{{\bf Essential Pairs for Nonabelian Groups of Order 8 and 16}}\\ \hline
  \(G\) \hfill [GAP label]& \(H\) [GAP subgroup]& \(r_-(G,H)\) & \(r_+(G,H)\)
  & \(r_n(G,H)\)\\ \hline

\(D_8\) \hfill [8, 3]&$C_2$ [3]&1&2& 2 \\
\( C_2^2 \rtimes C_4 \) \hfill [16, 3]&\{e\} [1] &2&3& \\
\(M_4(2)\) \hfill [16, 6]& $C_2$ [3] &1&2& \\
\(D_{16}\) \hfill [16, 7]& $C_2$ [3] &1&3& $\geq 2$ \\
\(SD_{16}\) \hfill  [16, 8]& $C_2$ [3] &1&3& $\geq 2$ \\
\(C_2 \times D_8\) \hfill  [16, 11]& $C_2$ [3] &2&3& 3 \\  \hline
\end{tabular}
\vspace{.2in}

\begin{tabular}{|l | l | l | l |  l |}
  \multicolumn{5}{c}{\bf Essential Pairs for Nonabelian Groups of Order 32}\\ \hline
  \(G\) \hfill [GAP label]& \(H\) [GAP subgroup]& \(r_-(G,H)\) & \(r_+(G,H)\)
  & \(r_n(G,H)\)\\ \hline
\(C_2^3 \rtimes C_4\)  \hfill [32, 6]&$C_2$ [3], $C_2^2$ [14], $C_2^2$ [19]&2, 2, 1&3, 3, 2&  \\
     &$C_2$ [9], $C_4$ [24]&1, 1&3, 3&$\geq 2$, $\geq 2$ \\
\(C_4.D_8\)  \hfill [32, 7]&$C_2$ [3]&1&3& $\geq 2$ \\
     &$C_2$ [7], $C_2^2$ [20]& 2, 1& 3, 2&  \\
\(C_4 \wr C_2\)  \hfill [32, 11]&$C_2$ [3]&1&3& $\geq 2$ \\
     &$C_2$ [7]& 2& 3&  \\
\(M_5(2)\)  \hfill [32, 17]&$C_2$ [3]&1&2& \\
\(D_{32}\)  \hfill [32, 18]&$C_2$ [3]&1&4&  $\geq 2$ \\
\(SD_{32}\)  \hfill [32, 19]&$C_2$ [3]&1&4& $\geq 2$ \\
\(C2 \times (C_2^2 \rtimes C_4)\)  \hfill [32, 22]&\{e\} [1]&3&4& \\
\(C_2^2 \wr C_2\)  \hfill [32, 27]&\{e\} [1], $C_2^2$ [46]&3, 1&4, 3& \\
     &$C_2$ [7]&2& 4& $\geq 3$\\
\(C_4^2\rtimes C_2\)  \hfill [32, 33]&$C_2$ [5]&2&3& \\
\( C_4 \rtimes D_8 \)  \hfill [32, 34]& $C_2$ [5]&2&4& $\geq 3$\\
\(C_2 \times D_{16}\)  \hfill [32, 39]&$C_2$ [3]&2&4&  $\geq 3$ \\
\(C_2 \times QD_{16}\)  \hfill [32, 40]&$C_2$ [3]&2&4& $\geq 3$\\
\(C_8 \rtimes C_2^2 \)  \hfill [32, 43]&$C_2$ [3], $C_2^2$ [28]&2, 1&4, 3& $\geq 3$, $\geq 2$\\
\(C_2^2 \times D_8\)  \hfill [32, 46]&$C_2$ [3]&3&4& 4 \\
\(D_8 \circ D_8\)  \hfill [32, 49]&$C_2^2$ [32]&2&3& 3  \\ \hline
\end{tabular}

\end{center}

\end{document}